\newtheorem{theorem}{Theorem}[section]
\newtheorem{proposition}[theorem]{Proposition}
\newtheorem{lemma}[theorem]{Lemma}
\theoremstyle{definition}
\newtheorem{Definition}[theorem]{Definition}
\newtheorem{example}[theorem]{Example}
\def\tto{\;{\lower 1pt \hbox{$\rightarrow$}}\kern -10pt
	\hbox{\raise 2pt \hbox{$\rightarrow$}}\;}
\def\Bar{\overline}
\def\ra{\rangle}
\def\la{\langle}
\def\R{\mathbb{R}}
\def\ox{\bar{x}}
\def\dom{\mbox{\rm dom}}
\def\O{\Omega}
\def\ph{\varphi}
\begin{document}

\title{Qualitative Analysis and Adaptive Boosted DCA for Generalized Multi-Source Weber Problems}

\author{V. S. T. Long, N. M.  Nam, T. Tran, N. T. T.  Van}

\maketitle

\begin{abstract}
	This paper has two primary objectives. First, we investigate  fundamental qualitative properties of the generalized multi-source Weber problem formulated using the Minkowski gauge function. This includes proving the existence of global optimal solutions, demonstrating the compactness of the solution set, and establishing optimality conditions for these solutions. Second, we apply Nesterov's smoothing  and the adaptive Boosted Difference of Convex functions Algorithm (BDCA) to solve both the unconstrained and constrained versions of the generalized multi-source Weber problems. These algorithms build upon the work presented in \cite{Artacho2020, Nam2018}. We conduct a comprehensive evaluation of the adaptive BDCA, comparing its performance to the method proposed in \cite{Nam2018}, and provide insights into its efficiency.~\\[1ex]
	\noindent\textbf{Keywords:} Generalized multi-source Weber problem, Nesterov's smoothing,  DCA, boosted DCA.
\end{abstract}
\section{Introduction}\label{Intro}
The single-source Weber problem (SWP), also known as the Fermat-Weber problem, is an important topic in convex optimization. It involves determining the optimal location of a source (or facility) that minimizes the sum of Euclidean distances to multiple demand points in a finite-dimensional Euclidean space. The SWP has been extensively studied in the field of facility location and is a generalization of the classical Fermat-Torricelli problem, which  involves only three demand points; see, e.g., \cite{Martini2002, Kuhn1973, Mordukhovich2010, Nam2017} and the references therein. However, in practical applications, it is often necessary to find multiple sources to serve a finite number of demand points, leading to what are known as the multi-source Weber problem. This problem involves determining the optimal locations of a finite number of facilities to serve the demand points by minimizing the sum of the total  distances from the sources  to the associated demand points. In the existing literature, the  multi-source Weber problems are often modeled as  nonconvex  nonsmooth optimization problems, which present challenges for traditional optimization techniques.

It has been shown the multi-source Weber problem belongs to the class of DC programming;  see \cite{cuong2023global,Nam2018}. Consequently, optimization techniques and algorithms from DC programming can be employed to investigate the problem from both theoretical and numerical perspectives. One of the most successful algorithms in DC programming, known as the Difference of Convex functions Algorithm (DCA), was initially introduced in \cite{Tao1986} and further developed in \cite{An2007, TA1, TA2, LeThi2018}. The DCA has proven to be robust and efficient across various applications, including the multi-source Weber problem (see, e.g., \cite{TA1, An2005, Nam2018}). By integrating a line search technique into the DCA, a new algorithm called the Boosted Difference of Convex functions Algorithm (BDCA) was introduced in \cite{AragonArtacho2018, AragonArtacho2022}. This line search technique not only accelerates the convergence of the algorithm but also enables it to escape stationary points and reach a global optimal solution in many circumstances. The BDCA has been experimentally shown to outperform the DCA in many practical applications, such as the minimum sum of squares clustering and the multidimensional scaling problem; see  \cite{Artacho2020}.

Inspired by recent advances in DC programming and its applications, this paper explores a general model of the multi-source Weber problem using the Minkowski gauge function. Our objective is to conduct a thorough investigation of the problem's fundamental qualitative properties and to develop effective numerical optimization methods. We begin by proving the existence of global optimal solutions, demonstrating the compactness of the solution set, and establishing optimality conditions for the problem. Next, we apply Nesterov's smoothing technique to reduce the nonsmoothness, thereby creating a new approximate objective function that is favorable for the BDCA. Numerical experiments confirm that the BDCA is highly effective when applied to the generalized multi-source Weber problem. This approach is new, as the BDCA has not yet been applied to this general model of the multi-source Weber problem. Furthermore, we employ the penalty method to address the constrained version of the problem.

The structure of the paper is organized as follows.  \Cref{sec: prelim} introduces  basic notations and definitions used throughout the paper. \Cref{Existence} establishes a global solution existence theorem for the generalized multi-source Weber problem and discusses the properties of the global solution set in detail. This section also characterizes local optimal solutions. \Cref{sec: Multifacility Location} utilizes the algorithm from \cite{Artacho2020} to solve the generalized multi-source Weber problem for both the unconstrained and constrained cases. \Cref{Results} focuses on enhancing the algorithms presented in the previous section and provides several examples comparing the performance of our new algorithms with that of the difference of convex algorithm. Finally, \Cref{Con} summarizes the paper's findings and outlines potential directions for future research.

\section{Preliminaries}\label{sec: prelim}

Throughout this paper, the Euclidean space $\mathbb{R}^n$, where $n\in \mathbb{N}$, is equipped with the inner product $\langle x, y \rangle = \sum_{i=1}^n x_i y_i$ and the norm $\|x\| = (\sum_{i=1}^n x_i^2)^{1/2}$, where $x = (x_1, \ldots, x_n)$ and $y = (y_1, \ldots, y_n)$. However, in Section~\ref{Results}, we use $\|\cdot\|_2$ to denote the Euclidean norm, ensuring consistency with the Taxicab norm and the maximum norm, which are denoted by $\|\cdot\|_1$ and $\|\cdot\|_\infty$, respectively. The closed Euclidean ball  with center $a \in \mathbb{R}^n$ and radius $r \geq  0$ is denoted by $\mathbb B[a, r]$.

Let us first recall basic notions used in the paper. The readers are referred to \cite{An2007,HUL,mordukhovich2023easy,r} and the references therein for more details.
\begin{Definition}
	Let $f\colon \R^n\to (-\infty, \infty]$ be a convex function. An element $v\in\R^n$ is called a \emph{subgradient} of $f$ at $\bar x\in \dom(f)= \{x\in \R^n\; |\; f(x)<\infty\}$ if
	\begin{equation*}
		\langle v,x-\bar x\rangle\leq f(x)-f(\bar x)\; \mbox{\rm for all } x\in \R^n.
	\end{equation*}
	The set of all such elements $v$ is called the \emph{subdifferential} of $f$ at $\bar x$ and is denoted by $\partial f(\bar x)$. If $\ox\not\in \dom(f)$, we set $\partial f(\ox)=\emptyset$.
\end{Definition}

\begin{Definition}
	\label{def_v1} Let $\Omega$ be a nonempty subset of $\R^n$. The {\em distance function} associated with $\Omega$ is defined by
	\begin{equation*}
		d(x; \Omega)=\inf\left\{\|x-w\|\; \big|\; w\in \Omega \right\}, \; x\in \R^n.
	\end{equation*}
\end{Definition}

\begin{Definition}
	Let $F$ be a nonempty compact convex subset of $\mathbb R^n$ with ~$0\in \mbox{\rm int}(F)$. The {\em Minkowski function} associated with $F$  is defined by
	\begin{equation}
		\label{gauge}
		\rho_F(x) = \inf\{t\geq 0\; |\; x\in tF\}, \; x\in \R^n.
	\end{equation}
The {\em polar set} $F^\circ$ of $F$ is the set
	$$F^{\circ} = \{v\in \R^n\mid \langle v, x\rangle \leq 1,\; \forall x\in F\}.$$
	We set
	$\|F\| = \sup\{\|x\|\mid x\in F\}\text{ and }\|F^\circ \| = \sup\{\|v\|\mid  v\in F^\circ\}.$
\end{Definition}

Given a finite set of demand points $A=\{a^1,\ldots,a^m\}\subset \mathbb{R}^n$, the  {\em generalized multi-source Weber problem} (GMWP) can be formulated as
\begin{eqnarray}
	\label{maxminn} \min \left\{f_F(x) = \sum_{i=1}^{m} \left(\min\limits_{\ell=1,\ldots,k} \rho_F\left(x^\ell - a^i\right)\right) \Big{|}\; x = (x^1, \ldots, x^k) \in \left(\R^{n}\right)^k \right\}.
\end{eqnarray}
 We always assume that
\begin{equation}\label{key222}
	m\geq 2\textrm{ and }1 \leq k \leq m.
\end{equation}
We also assume that $F$ is a nonempty compact convex set such that $0\in \mbox{\rm int}(F)$. 

Note that when $F$ is the closed unit ball in $\mathbb{R}^n$, problem~\eqref{maxminn} simplifies to the  \textit{multi-source Weber problem} (MWP), which has been recently studied in \cite{cuong2023global,CTWYOptim, cuong2020qualitative}. 

It can be shown that the GMWP belongs to the class of DC programming. Consider the DC program:
\begin{equation}
	\label{DCP} \min \left\{ f(x) = g(x) - h(x) \mid x \in \mathbb{R}^{n} \right\} \tag{\ensuremath{\mathcal{P}}},
\end{equation}
where $g \colon \mathbb{R}^{n} \to (-\infty, \infty]$ and $h \colon \mathbb{R}^{n} \to \mathbb{R}$ are convex functions.
The function $f$ defined in \eqref{DCP} is called a {\em DC function} and $g-h$ is a {\em DC decomposition} of $f$. Recall that the {\em Fenchel conjugate} of $g$ is defined by
$$g^*(y) = \sup\{\langle y, x\rangle - g(x) \mid x \in \mathbb{R}^{n}\}, \; y\in \R^n.$$
Next, we present the {\em Difference of Convex functions Algorithm} (DCA) (see \cite{Tao1986, TA1,TA2}):
\begin{algorithm}[H]
	\caption{DCA for solving \eqref{DCP}} \label{algDCA}
	\begin{algorithmic}
		\Procedure{DCA\;} {$x_0 \in \mbox{\rm dom}\,g, N \in \mathbb{N}$}
		\For{$p = 0, \dots , N$}
		\State Find $y_p \in \partial h(x_{p})$
		\State Find $x_{p+1}\in\partial g^*(y_p)$ 		\EndFor
		\EndProcedure
		\State \Output{$x_{N+1}$}
	\end{algorithmic}
\end{algorithm}
The {\em boosted DCA} (BDCA) (see  \cite{AragonArtacho2018,Artacho2020}) is an improvement of the DCA by adding a line search step incorporating an Armijo-type condition.  The BDCA can be summarized below:
\begin{algorithm}[H]
	\caption{BDCA for solving \eqref{DCP}} \label{AlgorithmGeneralBDCA}
	\begin{algorithmic}
		\Procedure{BDCA\;}{$x_0\in \dom g$, $\alpha > 0$, $\beta \in (0, 1)$, $N \in \mathbb{N}$}
		\For{$p = 0, \dots , N$}
		\State \underline{Step 1:} Find $y_p \in \partial h(x_{p})$.
		\State Find $z_p\in \partial g^*(y_p)$ or find $z_p$ approximately by solving the problem
		$$\underset{x \, \in \, \mathbb{R}^{nk}}{\operatorname{min}} \;  \ph_p (x) = g(x) - \langle y_p , x \rangle.$$
		\State \underline{Step 2:} Set $d_p = z_p - x_{p}$.  If $d_p = 0$ then Stop. Otherwise, go to Step~3.
		\State \underline{Step 3:} Choose any $\Bar{\lambda}_p \geq 0$, set $\lambda_p =\Bar{\lambda}_p$
		\While{$f(z_p + \lambda_p d_p) > f(z_p) - \alpha \lambda_p^2 \Vert d_p \Vert^2$}
		\State $\lambda_p = \beta \lambda_p$
		\EndWhile
		\State \underline{Step 4:} Set  $x_{p+1} = z_p + \lambda_p d_p$. If $x_{p+1} = x_p$ then Stop.
		\EndFor
		\EndProcedure
		\State \Output{$x_{N+1}$}
	\end{algorithmic}
\end{algorithm}
An advantage of BDCA over DCA is its ability to escape from stationary points and reach to the optimal solution in many circumstances. Numerical examples also shows the improvements in  computational speed. BDCA has proven its effectiveness in the case where both $g$ and $h$ are differentiable in \cite{AragonArtacho2018}, and when $g$ is differentiable but $h$ is not differentiable in \cite{Artacho2020}. The problems in the our paper fall under the latter case. The readers are referred to \cite{Tao1986, AragonArtacho2018, TA1,TA2, Artacho2020} for more details.

In our paper we will use the {\em adaptive BDCA} (aBDCA), in which the self-adaptive step size strategy from \cite{Artacho2020} replaces Step 3 of the BDCA in \cref{AlgorithmGeneralBDCA}. This self-adaptive strategy is detailed in \cref{algorithm: self-adaptive step size}, and allows the trial step sizes to increase or decrease depending on previous iterations.
\begin{algorithm}[H]
	\caption{Self-adaptive trial step size} \label{algorithm: self-adaptive step size}
	\begin{algorithmic}
		\Procedure{Self-Adaptive $\lambda$\;}{$\gamma > 1$, iteration $p$}
		\If{$p<2$}
		\State \Output{Choose some $\overline{\lambda}_{p} >0$}
		\EndIf
		\State Obtain $\lambda_{p-2},\; \lambda_{p-1}$ from Step 3 of BDCA, \cref{AlgorithmGeneralBDCA}, and $ \overline{\lambda}_{p-2}  ,\; \overline{\lambda}_{p-1}$ from previous iterations
		\If{$\lambda_{p-2} = \overline{\lambda}_{p-2}$ \text{ and } $\lambda_{p-1} = \overline{\lambda}_{p-1}$}
		\State $\overline{\lambda}_{p} = \gamma \lambda_{p-1}$
		\Else
		\State $\overline{\lambda}_{p} = \lambda_{p-1}$
		\EndIf
		\EndProcedure
		\Output{$\overline{\lambda}_{p}$}
	\end{algorithmic}
\end{algorithm}

\section{Properties of Optimal Solutions  to the  Generalized Multi-source Weber Problem} \label{Existence}

In this section, we conduct a comprehensive study of the properties of global and local optimal solutions to the GMWP \eqref{maxminn}, extending the related existing results  in \cite{cuong2023global,Nam2018}  and the references therein. 

\subsection{Global Optimal Solutions}

We first establish the existence of global optimal solutions to the GMWP. To proceed, we present the following known results involving the Minkowski function. 

\begin{lemma}
	\label{lemma1} {\rm (See~\cite[Lemma 4.1]{longoptimletter})}
	For any $x\in \R^n$, we have
	\begin{equation}
		\label{estimate_1} \rho_F (x)\leq \|F\|\|F^{\circ}\|\rho_F(-x).
	\end{equation}
\end{lemma}

\begin{proof}
For any $x\in \R^n$, it follows from [\cite[Proposition~2.1(c)]{colombo2004subgradient} that $$\dfrac{\rho_F (x)}{\|F^{\circ}\|}\leq \|x\|\leq\|F\| \, \rho_F(x).$$ Thus, we have
	$$\rho_F (x)\leq \|F^{\circ}\|\|x\|=\|F^{\circ}\|\|-x\|\leq \|F\|\|F^{\circ}\|\rho_F(-x),$$ which yields \eqref{estimate_1}.
\end{proof}

Given $a\in \R^n$ and $r\geq 0$, define the generalized ball
\begin{equation*}
    \mathbb B_F[a; r]=\{x\in \R^n\; |\; \rho_F(x-c)\leq r\}.
\end{equation*}
\begin{lemma}\label{lm2} For any $a\in \R^n$ and $r\geq 0$, the generalized ball $\mathbb B_F[a, r]$ is always compact.   
\end{lemma}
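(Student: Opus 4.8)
The plan is to prove that $\mathbb B_F[a,r]$ is both closed and bounded in $\R^n$, and then invoke the Heine--Borel theorem to conclude compactness. Here $\mathbb B_F[a,r]=\{x\in\R^n \mid \rho_F(x-a)\le r\}$, so the statement amounts to showing that this sublevel set of the (translated) Minkowski gauge is compact.

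For boundedness, I would reuse the two-sided estimate $\rho_F(x)/\|F^{\circ}\|\le \|x\|\le \|F\|\,\rho_F(x)$ that is recalled in the proof of \Cref{lemma1} (from \cite[Proposition~2.1(c)]{colombo2004subgradient}); note $\|F\|<\infty$ since $F$ is compact, and $\|F^{\circ}\|<\infty$ since $0\in\i F$ forces $F$ to contain a ball around the origin. If $x\in\mathbb B_F[a,r]$, then $\|x-a\|\le \|F\|\,\rho_F(x-a)\le \|F\|\,r$, hence $\|x\|\le \|a\|+\|F\|\,r$. Thus $\mathbb B_F[a,r]\subset \mathbb B[a,\|F\|r]$ and is bounded.

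For closedness, the key is that $\rho_F$ is continuous on all of $\R^n$. Since $0\in\i F$, the gauge $\rho_F$ is finite-valued; being a finite convex function on a finite-dimensional space, it is automatically continuous (indeed locally Lipschitz). Alternatively, sublinearity of $\rho_F$ together with the estimate above gives, for all $u,v\in\R^n$, $\rho_F(u)-\rho_F(v)\le\rho_F(u-v)\le\|F^\circ\|\,\|u-v\|$ and symmetrically, so $|\rho_F(u)-\rho_F(v)|\le\|F^\circ\|\,\|u-v\|$, i.e.\ $\rho_F$ is globally Lipschitz. Either way, $x\mapsto\rho_F(x-a)$ is continuous, so $\mathbb B_F[a,r]$, being the preimage of the closed set $(-\infty,r]$, is closed. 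Combining the two facts, $\mathbb B_F[a,r]$ is a closed and bounded subset of $\R^n$, hence compact.

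I do not expect a genuine obstacle here; this is essentially a routine verification. The only point requiring a small amount of care is justifying the continuity (equivalently, the Lipschitz property) of $\rho_F$, which rests precisely on the standing assumption $0\in\i F$ — without it the gauge could take the value $+\infty$ and the argument would break down.
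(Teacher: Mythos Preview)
Your proof is correct and follows essentially the same approach as the paper: establish boundedness via the norm comparison $\|x-a\|\le\|F\|\,\rho_F(x-a)$ (yielding $\mathbb B_F[a,r]\subset\mathbb B[a,\|F\|r]$), establish closedness via continuity of $\rho_F$, and conclude by Heine--Borel. The only cosmetic difference is that you supply an explicit Lipschitz argument for continuity, whereas the paper simply cites \cite[Proposition~6.18]{mordukhovich2023easy}.
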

\begin{proof} For any $x\in \mathbb B_F[a, r]$, by the proof of Lemma~\ref{lemma1} we have
\begin{equation*}
  \frac{\|x-a\|}{\|F\|}\leq   \rho_F(x-a)\leq r,
\end{equation*}
which implies that $x\in \mathbb B[a, r\|F\|]$. Thus, $\mathbb B_F[a; r]\subset \mathbb B[a, r\|F\|]$, and hence $\mathbb B_F[a, r]$ is bounded. The closedness of $\mathbb B_F[a, r]$ follows from the continuity of the Minkowski function $\rho_F$; see~\cite[Proposition~6.18]{mordukhovich2023easy}. Therefore, $\mathbb B_F[a; r]$ is compact.     
\end{proof}

We are now ready to prove  the existence of global optimal solutions to~\eqref{maxminn}. 

\begin{theorem}
\label{thm_global} The global optimal solution set $S_F$ of~\eqref{maxminn} is nonempty and closed.
\end{theorem}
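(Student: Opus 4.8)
The plan is to establish nonemptiness by a Weierstrass argument after reducing the minimization of $f_F$ over the noncompact set $(\R^n)^k$ to a minimization over a compact set, and then to read off closedness from the fact that $f_F$ is continuous with attained infimum. First I would record the two elementary ingredients. On one hand, $f_F$ is continuous on $(\R^n)^k$, since $\rho_F$ is continuous (as already used in the proof of Lemma~\ref{lm2}) and $f_F$ is a finite sum of finite minima of translates of $\rho_F$. On the other hand, evaluating $f_F$ at the trivial point $\bar x=(a^1,\dots,a^1)$ gives the finite bound $v^{*}:=\inf f_F\le M:=\sum_{i=1}^m\rho_F(a^1-a^i)<\infty$ (finite because $0\in\mbox{\rm int}(F)$), while clearly $v^{*}\ge 0$.

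Next I would reduce to a compact feasible set. Using the one-sided estimate $\|y\|\le\|F\|\,\rho_F(y)$ from the proof of Lemma~\ref{lemma1}, if $x=(x^1,\dots,x^k)$ satisfies $f_F(x)\le M+1$ then, for each demand point $a^i$ and any $\ell_i\in\argmin_{\ell}\rho_F(x^\ell-a^i)$, one has $\rho_F(x^{\ell_i}-a^i)=\min_\ell\rho_F(x^\ell-a^i)\le f_F(x)\le M+1$, i.e. $x^{\ell_i}\in\mathbb B_F[a^i;M+1]$. The obstacle is that a facility $x^\ell$ that is \emph{not} a nearest facility to any demand point is completely unconstrained, so $f_F$ is genuinely not coercive and no single ball traps a minimizing sequence. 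I would dispose of this by relocating every such ``unused'' facility to $a^1$: this does not increase $f_F$, since for each $i$ the value $\min_\ell\rho_F(\,\cdot\,-a^i)$ can only drop when the indices $\ell_i$ are left untouched, and it places the modified point inside $B^{k}$ where $B:=\bigcup_{i=1}^m\mathbb B_F[a^i;M+1]$. By Lemma~\ref{lm2}, $B$ is a finite union of compact sets, hence compact, and so is $B^{k}$. Applying this modification along a minimizing sequence shows $\inf_{B^{k}}f_F=v^{*}$, and then continuity of $f_F$ together with compactness of $B^{k}$ produces $x^{*}\in B^{k}$ with $f_F(x^{*})=v^{*}$; thus $x^{*}\in S_F$ and $S_F\neq\emptyset$.

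Finally, since $v^{*}$ is now known to be finite and attained, $S_F=\{x\in(\R^n)^k\mid f_F(x)=v^{*}\}=\{x\mid f_F(x)\le v^{*}\}$ is a sublevel set of the continuous function $f_F$, hence closed. I expect the only real subtlety to be the ``unused facility'' relocation step; the rest is a routine compactness/continuity argument, with the one caveat that, because $\rho_F$ is not symmetric, one must invoke the correct one-sided inequality $\|y\|\le\|F\|\,\rho_F(y)$ rather than any symmetry of the gauge. (An equivalent route to nonemptiness, which also exhibits the DC structure, is to write $f_F=\min_{\sigma}f_F^{\sigma}$ over the finitely many assignments $\sigma\colon\{1,\dots,m\}\to\{1,\dots,k\}$, where $f_F^{\sigma}(x)=\sum_{i=1}^m\rho_F(x^{\sigma(i)}-a^i)$ is convex and decouples across facilities into single-source problems that are convex and coercive in the used coordinates; each $f_F^{\sigma}$ then attains its infimum, and the minimum over the finitely many $\sigma$ attains $\inf f_F$.)
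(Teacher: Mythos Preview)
Your argument is correct. Both your proof and the paper's reduce the problem to a compact set via a relocation trick and then invoke Weierstrass, but the executions differ in an instructive way. The paper fixes the compact set $\big(\mathbb B_F[a^{i_0},(1+\|F\|\|F^\circ\|)\rho]\big)^k$ up front, minimizes over it, and then shows this restricted minimizer is global by relocating any facility lying \emph{outside} the ball to $a^{i_0}$; because such a facility may be the nearest one to some demand point, the paper must invoke Lemma~\ref{lemma1} (the asymmetry estimate $\rho_F(x)\le\|F\|\|F^\circ\|\rho_F(-x)$) to check that this relocation does not increase $f_F$. You instead relocate only \emph{unused} facilities, which trivially cannot raise $f_F$, and thereby sidestep Lemma~\ref{lemma1} entirely; the price is that your compact set $B^k$ with $B=\bigcup_i\mathbb B_F[a^i;M+1]$ is a product of unions rather than a single ball. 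Your route is a bit more elementary and also makes transparent (via the parenthetical assignment decomposition) why the infimum is attained, while the paper's route yields an explicit single generalized ball guaranteed to contain a global solution.
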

\begin{proof}
	Take any $i_0 \in I$, and define
	\begin{equation}
		\label{defrho}
		\rho=\max\limits_{i \in I}\rho_F(a^{i_0}-a^{i}).
	\end{equation}
By Lemma~\ref{lm2} the  generalized ball
	$\mathbb B_F\big[a^{i_0}, (1+\|F\|\|F^\circ\|)\, \rho\big]$ 	is compact.  Using the classical Weierstrass theorem (see, e.g., \cite[Theorem 7.9(i)]{mordukhovich2023easy}), we see that the problem
	\begin{equation}
		\label{compactification}
		\min \left\{f_F\left(x^1, \ldots, x^k\right) \mid x^\ell \in \mathbb B_F\big[a^{i_0}, (1+\|F\|\|F^\circ\|)\, \rho\big]\; \mbox{\rm for all }\;  \ell=1, \ldots, k\right\}
	\end{equation}
	has an optimal solution $\bar{x}=(\bar{x}^1, \ldots, \bar{x}^k) \in \mathbb{R}^{n k}$ satisfying the following inequality $$\rho_F(\bar{x}^\ell-a^{i_0}) \leq (1+\|F\|\|F^\circ\|) \, \rho\text{ for all }\ell=1, \ldots, k.$$
	Next, we will verify that  $\bar{x}$ belongs to $S_F$. Indeed, given any  $x=(x^1, \ldots, x^k)  \in \mathbb{R}^{n k}$, we consider the following two cases:
	\begin{itemize}
		\item[(a)] $\rho_F(x^\ell-a^{i_0}) \leq (1+\|F\|\|F^\circ\|) \rho$ for all $\ell =1, \ldots, k$;
		\item[(b)] $\rho_F(x^{ \ell}-a^{i_0}) > (1+\|F\|\|F^\circ\|)\, \rho$ for some $\ell \in \{1,\ldots,k\}$.
	\end{itemize}
	In case~(a), it is easy to see that $f_F(\bar{x}) \leq f_F(x)$.  In case (b), we first denote by $L$ the set of all indices
	$\ell=1, \ldots, k$ such that
	\begin{equation}
		\label{key66}
		(1+\|F\|\|F^\circ\|) \rho <\rho_F(x^{ \ell}-a^{i_0})
	\end{equation}
	We define a vector $\tilde{x}=(\tilde{x}^1, \ldots, \tilde{x}^k) \in \mathbb{R}^{n k}$ by setting $\tilde{x}^\ell=a^{i_0}$ for $\ell \in L$ and  $\tilde{x}^\ell=x^\ell$ for $\ell \in \{1,\ldots,k\} \backslash L$. Fix any $i \in 1, \ldots, m$. Then, for every $\ell \in L$ we have
	\begin{equation}
		\label{key88}
		\begin{array}{lll}
			\rho_F(\tilde{x}^\ell-a^i) & = \rho_F(a^{i_0}-a^i)                                                                               \\
			                           & = (1+\|F\|\|F^\circ\|) \rho -  \|F\|\|F^\circ\| \rho \;  (\textrm{by}~\eqref{defrho} )            \\
			                           & < \rho_F(x^\ell-a^{i_0}) - \|F\|\|F^\circ\| \rho \;  (\textrm{by}~\eqref{key66} )                 \\
			                           & \leq \rho_F(x^\ell-a^{i_0}) - \|F\|\|F^\circ\|\rho_F(a^{i_0}-a^i)\;  (\textrm{by}~\eqref{defrho}) \\
			                           & \leq \rho_F(x^\ell-a^{i_0}) - \rho_F(a^i-a^{i_0})\;  (\textrm{by Lemma}~\eqref{lemma1})           \\
			                           & \leq \rho_F(x^\ell-a^i),
		\end{array}
	\end{equation}
	where the last inequality holds since $\rho_F$ is subadditive; see~\cite[Theorem 6.14]{mordukhovich2023easy}. Meanwhile, for every $\ell\in \{1,\ldots,k\}\backslash L$, we have
	$$\rho_F(\tilde{x}^\ell-a^i)=\rho_F(x^\ell-a^i). $$ This together with~\eqref{key88} implies that
	\begin{equation}
		\label{leq1}
		f_F(\tilde{x})=\sum_{i=1}^m\min_{\ell=1,\ldots,k}\rho_F(\tilde{x}^\ell-a^i)\leq \sum_{i=1}^{m}\, \min_{\ell=1,\ldots,k}\rho_F(x^\ell-a^i)=f_F(x).
	\end{equation}
	In addition, by the above construction of $\tilde{x}^\ell$, we have $$\tilde x^\ell\in \mathbb B_F[a^{i_0}, (1+\|F\|\|F^\circ\|) \rho]\text{ for all } \ell =1, \ldots, k$$
	and thus $f_F(\bar x)\leq f_F(\tilde{x})$. Combining this with~\eqref{leq1}, we deduce  that $\bar x$ is the global optimal solution to problem ~\eqref{maxminn}. Since $f_F$ is continuous, we conclude that $S_F$ is closed and thus complete the proof of the theorem.
\end{proof}
 
In the next theorem, we show that the global optimal solution set of the gen-
eralized multi-source Weber problem is compact. \begin{theorem}
The set $S_F$ of all global optimal solutions to problem~\eqref{maxminn} is compact.
\end{theorem}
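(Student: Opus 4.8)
The plan is to invoke Theorem~\ref{thm_global}, which already gives that $S_F$ is nonempty and closed; it then remains to prove that $S_F$ is bounded. Retaining the notation from the proof of Theorem~\ref{thm_global} (fix $i_0\in I$, put $\rho=\max_{i\in I}\rho_F(a^{i_0}-a^i)$, and set $R=(1+\|F\|\|F^\circ\|)\rho$), I would in fact establish the inclusion
\begin{equation*}
	S_F\subseteq\big(\mathbb B_F[a^{i_0},R]\big)^k .
\end{equation*}
Since $\mathbb B_F[a^{i_0},R]$ is compact by Lemma~\ref{lm2}, its $k$-fold Cartesian product is compact, and a closed subset of a compact set is compact; hence this inclusion finishes the proof.

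Before proving the inclusion I would record a \emph{no redundant source} fact: for every $\bar x=(\bar x^1,\dots,\bar x^k)\in S_F$ and every $\ell\in\{1,\dots,k\}$, the index $\ell$ is a nearest source to at least one demand point, i.e.\ the set $C_\ell=\{\,i\in I:\rho_F(\bar x^\ell-a^i)\le\rho_F(\bar x^{\ell'}-a^i)\ \forall\,\ell'\,\}$ is nonempty. The case $k=1$ is trivial; for $k\ge2$ I argue by contradiction. If $C_{\ell_0}=\emptyset$, then $\min_{\ell}\rho_F(\bar x^\ell-a^i)=\min_{\ell\ne\ell_0}\rho_F(\bar x^\ell-a^i)$ for every $i$, so replacing the $\ell_0$-th block of $\bar x$ by a demand point $a^j$ yields a point with value at most $f_F(\bar x)-\min_{\ell\ne\ell_0}\rho_F(\bar x^\ell-a^j)$ (the $j$-th summand becomes $0$ and no summand increases). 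Optimality of $\bar x$ forces $\min_{\ell\ne\ell_0}\rho_F(\bar x^\ell-a^j)=0$; since $j$ is arbitrary, each of the $m$ demand points coincides with one of the at most $k-1$ points $\{\bar x^\ell:\ell\ne\ell_0\}$, which is impossible because the demand points are pairwise distinct and $k-1<k\le m$.

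To prove the inclusion, fix $\bar x\in S_F$ and suppose $\rho_F(\bar x^{\ell_0}-a^{i_0})>R$ for some $\ell_0$. Let $L=\{\ell:\rho_F(\bar x^\ell-a^{i_0})>R\}$ and build $\tilde x$ exactly as in the proof of Theorem~\ref{thm_global}: $\tilde x^\ell=a^{i_0}$ for $\ell\in L$ and $\tilde x^\ell=\bar x^\ell$ otherwise. The chain \eqref{key88} shows $\rho_F(\tilde x^\ell-a^i)\le\rho_F(\bar x^\ell-a^i)$ for all $\ell,i$, strictly when $\ell\in L$; summing the termwise minima gives $f_F(\tilde x)\le f_F(\bar x)$, hence $f_F(\tilde x)=f_F(\bar x)$ and therefore $\min_\ell\rho_F(\tilde x^\ell-a^i)=\min_\ell\rho_F(\bar x^\ell-a^i)$ for every $i$. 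Choosing $i\in C_{\ell_0}$ by the previous paragraph, so that $\rho_F(\bar x^{\ell_0}-a^i)=\min_\ell\rho_F(\bar x^\ell-a^i)$, I obtain
\begin{equation*}
	\rho_F(\bar x^{\ell_0}-a^i)=\min_\ell\rho_F(\tilde x^\ell-a^i)\le\rho_F(\tilde x^{\ell_0}-a^i)=\rho_F(a^{i_0}-a^i)<\rho_F(\bar x^{\ell_0}-a^i),
\end{equation*}
where the last strict inequality is the $\ell_0\in L$ instance of \eqref{key88} (it ultimately comes from \eqref{key66}). This contradiction shows $L=\emptyset$, i.e.\ $\rho_F(\bar x^\ell-a^{i_0})\le R$ for all $\ell$, which is the desired inclusion.

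I expect the main obstacle to be the \emph{no redundant source} step: it is the only place that genuinely uses $m\ge2$, $k\le m$, and the pairwise distinctness of $a^1,\dots,a^m$, and it is exactly what rules out an optimal solution carrying a block placed arbitrarily far from all demand points (such a solution would make $S_F$ unbounded). One must also cite \eqref{key88} in its correct range — it is stated there for every $\ell\in L$ and every $i\in I$, which is precisely what the argument needs — and note that its strictness is inherited from \eqref{key66}.
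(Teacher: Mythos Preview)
Your proof is correct, but it follows a genuinely different route from the paper's. The paper argues directly: if some component $y^k$ of an optimal $y$ is far enough (measured against a bound involving the optimal value $M$), then $\rho_F(y^k-a^i)>M\ge\min_\ell\rho_F(y^\ell-a^i)$ for every $i$, so $y^k$ is never a nearest source; using $k-1<m$ it then picks a demand point $a^{m}$ distinct from all $y^1,\dots,y^{k-1}$ and replaces $y^k$ by $a^m$ to obtain a strict decrease. Your argument is more modular: you first isolate the \emph{no redundant source} property (which is essentially the content of Proposition~\ref{pro3.5}(a), proved later in the paper by the same pigeonhole idea), and then you recycle the $\tilde x$ construction and the strict chain~\eqref{key88} from the proof of Theorem~\ref{thm_global} to contradict non-redundancy. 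The payoff of your route is a sharper, explicit conclusion: $S_F\subseteq\big(\mathbb B_F[a^{i_0},R]\big)^k$ with $R=(1+\|F\|\|F^\circ\|)\rho$ depending only on the data $A$ and $F$, not on the optimal value $M$; in particular it shows that the compact box used in~\eqref{compactification} already contains every global solution. The paper's approach, on the other hand, avoids the preliminary lemma and proceeds in a single pass.
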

\begin{proof}
By Theorem~\ref{thm_global}, it suffices to show that $S_F$ is bounded. Suppose that it is unbounded. Then, there exists \(y=(y^1, ..., y^k) \in S_F\) such that
\begin{equation}\label{aboveinequality}
\sum_{\ell = 1}^k \|y^\ell\|^2 \geq k\|F\|^2(a + M + 1)^2,
\end{equation}
where $a = \max\{\rho_F(a^1), ..., \rho_F(a^m)\}$, and $M=f_F(y^1, ..., y^k)$ is the optimal value of problem~\ref{maxminn}. 
By \eqref{aboveinequality}, we may assume without loss of generality that
\begin{equation}\label{key35}
	\|y^{k}\| \geq \|F\|(a + M + 1).
\end{equation}
 For any \(i =1, \ldots, m\), by~\cite[Proposition~2.1(c)]{colombo2004subgradient} we have
$$\rho_F(y^k-a^i) \geq\frac{\|y^k - a^i\|}{\|F\|}\text{ and 
}a\geq \rho_F(a^i)\geq \dfrac{\|a^i\|}{\|F\|}.$$ This, together with~\eqref{key35}, implies that
\begin{equation}\label{mauthuan1}
\begin{array}{lll}
 \rho_F(y^k-a^i) \geq \dfrac{\|y^k - a^i\|}{\|F\|} &\geq \dfrac{\|y^k\| - \|a^i\|}{\|F\|}& \\
	&\geq (a + M + 1) - a\\
	& > M=f_F(y^1, \ldots, y^k)\\ 
     &= \sum_{p=1}^m\left(\min_{\ell =1,...,k} \rho_F(y^\ell - a^p)\right)\\
	& \geq  \min_{\ell =1,...,k} \rho_F(y^\ell - a^i). \end{array}	
\end{equation}
This clearly yields a contradiction if $k=1$. Consider the case where $k\geq 2$. It follows from~\eqref{mauthuan1} that $$\min_{\ell =1,...,k} \rho_F(y^\ell - a^i)= \min_{\ell =1,...,k-1} \rho_F(y^\ell - a^i) \;  \textrm{ for all }i=1, \ldots, m,$$ and hence we have
\begin{equation}\label{mauthuan}
    \sum_{i=1}^m \left(\min_{\ell =1,...,k} \rho_F(y^\ell - a^i)\right)= \sum_{i=1}^m\left(\min_{\ell =1,...,k-1} \rho_F(y^\ell - a^i)\right).
\end{equation} 
Since $k-1<m$, we can also assume without the loss of generality that $a^m\neq~y^{\ell}$ for all $\ell=1,...,k-1$.  Then we have 
$$\min_{\ell =1,...,k-1} \rho_F(y^\ell - a^m)=\rho_F(y^{k_1}-a^m)> 0 \textrm{ for some } k_1\in \{1,...,k-1\}.$$
Combining this with~\eqref{mauthuan}, we get
\begin{equation*}\label{eqnew}
\begin{aligned}
   f_F(y^1,...,y^k)&=\sum_{i=1}^{m-1}\left(\min_{\ell =1,...,k-1} \rho_F(y^\ell - a^i)\right)+\rho_F(y^{k_1}-a^m)  \\
     & >  \sum_{i=1}^{m-1}\left(\min_{\ell =1,...,k-1} \rho_F(y^\ell - a^i)\right)
     \\
     &\geq \sum_{i=1}^{m-1}\left(\min\left[\min_{\ell =1,...,k-1}\rho_F(y^\ell - a^i), \rho_F(a^m-a^i)\right]\right) \\
     &= \sum_{i=1}^{m}\left(\min\left[\min_{\ell =1,...,k-1}\rho_F(y^\ell - a^i), \rho_F(a^m-a^i)\right]\right) \\
     &=f_F(y^1,...,y^{k-1},a^m). 
\end{aligned} 
\end{equation*}
This contradicts to the assumption that $(y^1,...,y^k)\in S_F$, thereby completing the proof of the theorem.
\end{proof}

Following \cite{cuong2020qualitative, LNSYkcenter}, we inductively construct~$k$ subsets $A^1, \ldots, A^k$ from the set $A$ of the demand points in the following way: first put $A^0=\emptyset$ and then define
$$
	A^\ell=\left\{a^i \in A \, \backslash \, \left(\bigcup\limits_{q=0}^{\ell-1} A^q\right) \; \Big | \;  \rho_F(x^\ell-a^i)=\min _{r=1,\ldots,k}\rho_F(x^r-a^i)\right\}$$
 for $\ell = 1, \ldots , k$. The family $\{A^1,\ldots, A^k\}$   constructed  this way is called the \textit{natural clustering} with respect to $x=(x_1, \ldots, x_k)\in \R^{nk}$.
\begin{Definition}
	We say  that the component $x^\ell$ of $x=(x^1,...,x^k)\in \mathbb{R}^{nk}$ is {\em attractive} with respect to $A$ if the set
	\begin{equation}
		\label{attraction}
		A[x^\ell]=\left\{a^i \in A \; \Big |\; \rho_F(x^\ell-a^i)=\min_{r=1,\ldots,k}\rho_F(x^r-a^i)\right\}
	\end{equation}
	is nonempty.
\end{Definition}
\noindent It is easy to see that
$$A^\ell=A[ x^\ell]\setminus\left(\bigcup_{q=1}^{\ell-1}A^q\right).$$

The following proposition provides some properties of the global optimal solutions to~\eqref{maxminn}.

\begin{proposition}
	\label{pro3.5} Assume that $\bar{x}=(\bar{x}^1, \ldots, \bar{x}^k)$ is a global optimal solution to problem~\eqref{maxminn}. Then, the following assertions hold:
	\begin{enumerate}
		\item [{\rm (a)}] If $\left\{A^1, \ldots, A^k\right\}$ is the natural clustering  with respect to $\bar{x}=(\bar{x}^1, \ldots, \bar{x}^k)$, then $A^\ell$ is nonempty for every $\ell=1, \ldots, k$.
		\item [{\rm (b)}] The components of $\bar{x}$ are pairwise distinct, i.e., $\bar{x}^{\ell_1} \neq \bar{x}^{\ell_2}$ whenever $\ell_1 \neq~\ell_2$.
	\end{enumerate}
\end{proposition}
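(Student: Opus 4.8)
The plan is to prove both assertions by contradiction, in each case modifying the optimal solution $\bar x$ to produce a new feasible point with strictly smaller objective value, contradicting global optimality.

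For part (a), suppose some $A^{\ell_0}$ is empty. Since $A^\ell = A[\bar x^\ell]\setminus\bigl(\bigcup_{q=1}^{\ell-1}A^q\bigr)$ and the $A^q$ partition a subset of $A$, emptiness of $A^{\ell_0}$ means the demand points assigned to $\bar x^{\ell_0}$ are all ``claimed'' by earlier components (or $A[\bar x^{\ell_0}]$ is already covered); in any event the index $\ell_0$ is ``useless'' in the sense that dropping it does not change $f_F$. Concretely, I would show $\min_{r=1,\ldots,k}\rho_F(\bar x^r-a^i)=\min_{r\neq\ell_0}\rho_F(\bar x^r-a^i)$ for every $i$. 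Now here is the key move: since $k\le m$ (assumption \eqref{key222}) and each nonempty $A^\ell$ absorbs at least one demand point, if all the other $A^\ell$ were nonempty we could have at most $k-1 < m$ demand points covered — a contradiction — unless there is some demand point $a^{i^*}$ not lying in any $A^\ell$, $\ell\ne\ell_0$. Actually the cleaner route: relocate $\bar x^{\ell_0}$ to coincide with a demand point $a^{i^*}$ that is currently served at positive distance (such a point exists because there are $m\ge k$ demand points but only $k-1$ ``effectively used'' centers, so by pigeonhole some $a^{i^*}$ has $\min_{r\neq\ell_0}\rho_F(\bar x^r-a^{i^*})>0$; the degenerate subcase where all demand points coincide with centers is handled separately and trivially can't occur when they're strictly positive). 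Setting $\bar x^{\ell_0}:=a^{i^*}$ makes the $i^*$-term drop to $0$ while no other term increases, strictly decreasing $f_F$. This contradicts $\bar x\in S_F$.

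For part (b), suppose $\bar x^{\ell_1}=\bar x^{\ell_2}$ for distinct $\ell_1,\ell_2$. Then these two components are redundant: $\min_{r}\rho_F(\bar x^r-a^i)$ is unchanged if we delete one of them, say drop $\ell_2$. So again $\ell_2$ is a free slot, and by the same pigeonhole argument as in (a) — $m\ge k$ demand points, only $k-1$ distinct center values in use — there is a demand point $a^{i^*}$ with $\min_{r\neq\ell_2}\rho_F(\bar x^r-a^{i^*})>0$ (here I use that the demand points are distinct from the common center value, or that not all $m$ of them can coincide with the $k-1<m$ distinct centers). Re-placing $\bar x^{\ell_2}:=a^{i^*}$ sends the $i^*$-term to $0$ and leaves all others non-increased, strictly decreasing $f_F$ — contradiction. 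Note part (b) can also be derived from part (a): if $\bar x^{\ell_1}=\bar x^{\ell_2}$ then $A[\bar x^{\ell_1}]=A[\bar x^{\ell_2}]$, so whichever of $\ell_1,\ell_2$ comes later has its $A^\ell=\emptyset$, contradicting (a). I would likely present (b) this short way once (a) is established.

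The main obstacle I anticipate is handling the boundary/degenerate cases cleanly in part (a): showing that one can always find a demand point $a^{i^*}$ strictly separated from all the ``active'' centers so that relocating the idle center $\bar x^{\ell_0}$ yields a \emph{strict} decrease. This requires carefully using $k\le m$ together with the fact that $\rho_F(u)=0$ only when $u=0$ (since $F$ is bounded), so that $\rho_F(\bar x^r-a^i)=0$ forces $\bar x^r=a^i$; a counting argument then shows $k-1$ distinct center values cannot exhaust all $m$ distinct demand points, guaranteeing the needed $a^{i^*}$. The rest — verifying that the relocation does not increase any of the other $m-1$ terms — is immediate from the definition of the pointwise minimum.
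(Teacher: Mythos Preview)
Your proposal is correct and follows essentially the same approach as the paper: argue by contradiction, relocate the idle center to a demand point currently served at strictly positive distance (found via the pigeonhole $m\ge k>k-1$), and observe that every term of $f_F$ weakly decreases while one strictly decreases. Your shortcut for (b)---observing that $\bar x^{\ell_1}=\bar x^{\ell_2}$ forces $A^{\ell_2}=\emptyset$ and invoking (a)---is slightly cleaner than the paper, which instead repeats the relocation argument directly for (b).
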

\begin{proof}
     (a)  We prove by contradiction. On the contrary, assume without loss generality that that $A^{1}=\emptyset$. Moreover, since $k\leq m$, we also can assume that $A^{2}$ contains at least two distinct points. This implies that there exists an $a^{i_1} \in A^{2}$ such that $a^{i_1} \neq \bar{x}^{2}$. Setting $\hat{x}^{1}:=a^{i_1}$ and $\hat{x}^\ell:=\bar{x}^\ell$ for all $\ell =2,\ldots,k$, we obtain
$$
\begin{small}
    \begin{array}{ll}
f_F(\hat{x}) & =\sum\limits_{a^i\in A^2}\left(\min\limits_{\ell=1,...,k} \rho_F(\hat x^\ell-a^i)\right)+\cdots
  +\sum\limits_{a^i\in A^k}\left(\min\limits_{\ell=1,...,k} \rho_F(\hat x^\ell-a^i)\right) \\[0.2in]
  &=\sum\limits_{a^i\in A^2}\left(\min\left[\min\limits_{\ell=2,...,k} \rho_F(\bar x^\ell-a^i),\rho_F(a^{i_1}-a^i)\right]\right)+\cdots\\[0.2in]
 &\textbf{} \quad\quad\quad\quad\quad\quad\quad +\sum\limits_{a^i\in A^k}\left(\min\left[\min\limits_{\ell=2,...,k} \rho_F(\bar x^\ell-a^i),\rho_F(a^{i_1}-a^i)\right]\right) \\[0.2in]
&=\sum\limits_{a^i\in A^2\setminus\{a^{i_1}\}}\min\left[\rho_F(\bar x^2-a^i),\rho_F(a^{i_1}-a^i)\right]
\\[0.2in]
&\textbf{} \quad\quad\quad\quad\quad\quad\quad +\sum\limits_{a^i\in A^3}\min\left[\rho_F(\bar x^3-a^i),\rho_F(a^{i_1}-a^i)\right] \\[0.2in]
& \textbf{} \quad\quad\quad\quad\quad\quad\quad + \cdots+\sum\limits_{a^i\in A^k}\min\left[\rho_F(\bar x^k-a^i),\rho_F(a^{i_1}-a^i)\right]
\\[0.2in]
&\leq\sum\limits_{a^i\in A^2\setminus\{a^{i_1}\}}\rho_F(\bar x^2-a^i)+\sum\limits_{a^i\in A^3}\rho_F(\bar x^3-a^i)+\cdots +\sum\limits_{a^i\in A^k}\rho_F(\bar x^k-a^i).
\end{array}
\end{small}$$
We also have 
\begin{small}
$$f_F(\bar x)= \rho_F(\bar x^2- a^{i_1})\; +\sum\limits_{a^i\in A^2\setminus\{a^{i_1}\}} \rho_F(\bar x^2-a^i) +\sum\limits_{a^i\in A^3} \rho_F(\bar x^3-a^i)+\cdots
  +\sum\limits_{a^i\in A^k} \rho_F(\bar x^k-a^i),$$
which gives us the  estimate
$$ f_F(\hat x)-f_F(\bar x) \leq-\rho_F(\bar{x}^{2}-a^{i_1})<0.$$
\end{small}
This yields a contradiction since $\bar{x}$ is a global optimal solution to problem (\ref{maxminn}).

(b) Suppose on the contrary that there exist distinct indexes $\ell_1, \ell_2 \in \{1,\ldots,k\}$ satisfying $\bar{x}^{\ell_1}=\bar{x}^{\ell_2}$. Since $k \leq m$, we have $k-1<m$, and thus there exists $\ell_0 \in \{1,\ldots,k\}$ such that $\left|A[\bar{x}^{\ell_0}]\right| \geq 2$ where $\left|A[\bar{x}^{\ell_0}]\right|$ denotes the number of elements of $A[\bar{x}^{\ell_0}]$. Therefore, one can find a point $a^{i_0} \in A[\bar{x}^{\ell_0}]$ such that $\bar{x}^{\ell_0} \neq a^{i_0}$. Set $\hat{x}^\ell=\bar{x}^\ell$ for every $\ell \in \{1,\ldots,k\} \backslash\left\{\ell_2\right\}$ and $\hat{x}^{\ell_2}=a^{i_0}$. Then, similar to the proof of part (a) we have
	$$
		f_F(\hat{x}^1, \ldots, \hat{x}^k)-f_F\left(\bar{x}^1, \ldots, \bar{x}^k\right) \leq-\rho_F(\bar{x}^{\ell_0}-a^{i_0})<0,
	$$
	which is a contradiction. The proof is now complete.
\end{proof}


\subsection{Local Optimal Solutions}\label{subseclocal}

Recall that  a vector $\bar x=(\bar {x}^1, \ldots, \bar {x}^k) \in  \mathbb{R}^{nk}$ is said to be a {\em local optimal solution} to problem~(\ref{maxminn})
	if there exists $\epsilon >0$ such that for all $x = (x^1, \ldots, x^k) \in  \mathbb{R}^{nk}$ satisfying
	$\|x^\ell - \bar x^\ell\| < \epsilon$ for all $\ell=1,...,k$, we have $$f_F(\bar x) \leq  f_F(x).$$
	The set of all local optimal solutions to problem (\ref{maxminn}) is denoted by $S^{loc}_F.$

To establish necessary and sufficient conditions for the existence of local optimal solutions to problem (\ref{maxminn}), we find a DC decomposition of the  objective function $f_F$. Fix any $x=(x^1,\ldots,x^k)\in \mathbb{R}^{nk}$. Observe that
$$
	\begin{array} {lll}
		f_F(x) & = & \displaystyle\sum_{i=1}^m\min_{\ell=1,\ldots,k}\rho_F(x^\ell-a^i)                                                                  \\
		       & = & \displaystyle\sum_{i=1}^m\left(\sum_{r=1}^k\rho_F(x^r-a^i)- \max_{\ell=1,\ldots,k} \sum_{r=1,r \neq \ell}^k\rho_F(x^r-a^i)\right).
	\end{array}
$$
We first define
\begin{equation}
	\label{key1}	g(x)=\sum_{i=1}^m\left(\sum_{r=1}^k\rho_F(x^r-a^i)\right)
\end{equation}
and
\begin{equation}
	\label{key2}
	h(x)=\sum_{i=1}^m\left(\max_{\ell=1,\ldots,k} \sum_{r=1, r\neq \ell}^k\rho_F(x^r-a^i)\right).
\end{equation}
Then, we obtain a DC decomposition for the objective function of problem~(\ref{maxminn}):
\begin{equation}
	\label{key3}
	f_F(x)=g(x)-h(x), \; \ x\in \R^{nk}.
\end{equation}
Furthermore, we define $$h_{i,\ell}(x) = \displaystyle\sum_{r=1, r\neq\ell}^k \rho_F(x^r-a^i) \;  \ \text{for} \; \ (i,\ell)\in \{1,\ldots,m\}\times \{1,\ldots,k\},$$
and then set
\begin{equation}
	\label{key7}	h_i(x)=\max_{\ell=1,\ldots,k}h_{i,\ell}(x) \;  \text{ for}\;  i=1, \ldots, m.
\end{equation}
It follows that
\begin{equation}
	\label{key13}
	h(x)=\sum_{i=1}^m h_{i}(x).
\end{equation}

Before presenting the main result of this subsection, we need the following proposition.
\begin{proposition}
Let $x=(x_1, \ldots, x_k)\in \R^{nk}$.	For every $i=1, \ldots, m$,  define
	\begin{equation}
		\label{key6}
		L_i(x) =\left\{ \ell=1,\ldots,k \;  \big | \;  h_{i,\ell}(x)=h_i(x) \right\}.
	\end{equation}
	Then
	\begin{equation}
		\label{key5}
		L_i(x) =\left\{ \ell=1, \ldots, k \;  \big | \;   a^i\in A[ x^\ell]  \right\} .
	\end{equation}
\end{proposition}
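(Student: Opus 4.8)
The plan is to prove the set equality in~\eqref{key5} by showing that the two defining conditions on an index $\ell$ are equivalent, namely that $h_{i,\ell}(x)=h_i(x)$ if and only if $a^i\in A[x^\ell]$. Recall from~\eqref{key7} that $h_i(x)=\max_{\ell=1,\ldots,k}h_{i,\ell}(x)$, so $\ell\in L_i(x)$ exactly when $h_{i,\ell}(x)$ attains this maximum, i.e. when $h_{i,\ell}(x)\geq h_{i,r}(x)$ for all $r=1,\ldots,k$.

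The key algebraic observation is that for each fixed $i$, the quantity $\sum_{r=1}^k\rho_F(x^r-a^i)$ does not depend on $\ell$, and $h_{i,\ell}(x)=\sum_{r=1,r\neq\ell}^k\rho_F(x^r-a^i)=\left(\sum_{r=1}^k\rho_F(x^r-a^i)\right)-\rho_F(x^\ell-a^i)$. Hence, writing $S_i:=\sum_{r=1}^k\rho_F(x^r-a^i)$, we have $h_{i,\ell}(x)=S_i-\rho_F(x^\ell-a^i)$. Therefore maximizing $h_{i,\ell}(x)$ over $\ell$ is the same as \emph{minimizing} $\rho_F(x^\ell-a^i)$ over $\ell$. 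First I would record this identity, then conclude: for any $\ell,r$,
\[
h_{i,\ell}(x)\geq h_{i,r}(x)\iff S_i-\rho_F(x^\ell-a^i)\geq S_i-\rho_F(x^r-a^i)\iff \rho_F(x^\ell-a^i)\leq \rho_F(x^r-a^i).
\]
Taking this over all $r=1,\ldots,k$ shows $\ell\in L_i(x)$ if and only if $\rho_F(x^\ell-a^i)=\min_{r=1,\ldots,k}\rho_F(x^r-a^i)$, which by the definition~\eqref{attraction} of $A[x^\ell]$ is precisely the statement $a^i\in A[x^\ell]$. This chain of equivalences gives the desired set equality, and I would then briefly note both inclusions ($\subseteq$ and $\supseteq$) follow at once.

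I do not anticipate a genuine obstacle here; the proof is essentially the bookkeeping identity $h_{i,\ell}(x)=S_i-\rho_F(x^\ell-a^i)$ followed by the trivial fact that subtracting a common constant turns a max into a min. The only point requiring a small amount of care is making sure the indexing is handled correctly when $k=1$: in that case the empty sum convention gives $h_{i,1}(x)=0=h_i(x)$ and simultaneously $A[x^1]=A$, so both sides of~\eqref{key5} equal $\{1\}$ and the equality still holds. I would mention this edge case in one sentence to be safe, but it requires no separate argument.
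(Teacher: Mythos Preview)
Your proposal is correct and follows essentially the same approach as the paper: both rewrite $h_{i,\ell}(x)=\sum_{r=1}^k\rho_F(x^r-a^i)-\rho_F(x^\ell-a^i)$ and observe that the $\ell$ maximizing $h_{i,\ell}$ is exactly the $\ell$ minimizing $\rho_F(x^\ell-a^i)$, which is the defining condition for $a^i\in A[x^\ell]$. Your explicit treatment of the $k=1$ edge case is a small addition not present in the paper, but the core argument is identical.
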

\begin{proof}
	By the definition of the function $h_{i,\ell}$, we have
	$$h_{i,\ell}= \displaystyle\sum_{r=1}^k\rho_F(x^r-a^i)-\rho_F(x^\ell-a^i).$$
	Then, it follows from (\ref{key7}) that
	\begin{equation}
		\label{f11}
		\begin{array}{ll}
			h_i(x) & =\displaystyle \max\limits_{\ell=1,\ldots,k} \left(\sum_{r=1}^k\rho_F(x^r-a^i)-\rho_F(x^\ell-a^i)\right)  \\
			       & =\displaystyle\sum_{r=1}^k\rho_F(x^r-a^i)+ \max\limits_{\ell=1,\ldots,k}\left (-\rho_F(x^\ell-a^i)\right) \\
			       & =\displaystyle\sum_{r=1}^k\rho_F(x^r-a^i)- \min\limits_{\ell=1,\ldots,k}\rho_F(x^\ell-a^i).
		\end{array}
	\end{equation}
	Thus, the maximum in (\ref{key7}) is attained at index $\ell$ if and only if the minimum in \eqref{f11} is
	attained at the same index. This, combined with (\ref{attraction}) and (\ref{key6}), shows that  (\ref{key5}) holds.
\end{proof}
Let $\bar{x}=(\bar{x}^1, \ldots, \bar{x}^k)\in \mathbb{R}^{nk}$. Suppose that $A[\bar{x}^\ell]$ is nonempty for some $\ell\in\{1,...,k\}$. Consider the index set 
\begin{equation}\label{Iell}
I(\bar x^\ell)=\{i\in I\; |\; a_i\in A[\bar{x}^\ell]\}    
\end{equation}
 and the associated single-source Weber problem:
\begin{eqnarray}
	\label{sWb} \min \left\{\ph^\ell_F(x) = \sum_{i\in I(\bar x^\ell)} \rho_F(x - a^i)\;  \Big{|}\; x\in \R^n \right\}.
\end{eqnarray}
The following lemma is instrumental in establishing both necessary and sufficient conditions for the existence of a local optimal solution.  
\begin{lemma}\label{lemmamoi}
Let $\bar{x} = (\bar{x}^1, \ldots, \bar{x}^k) \in \mathbb{R}^{nk}$. Suppose that for every $i=1, \ldots, m$, the index set $L_i(\bar{x})$ given by \eqref{key5} is a singleton. Then, there exists $\varepsilon > 0$ such that for any $x = \left(x^1, \ldots, x^k\right) \in \mathbb{R}^{nk}$ satisfying $\|x^r - \bar{x}^r\| < \varepsilon$ for all $r =1, \ldots, k$, the following properties are satisfied:
\begin{enumerate} 
		\item [{\rm (a)}] $L_i(x) = L_i(\bar{x})\ \; \mbox{\rm for all } i =1, \ldots, m.$
\item [{\rm (b)}] $I(\bar x^r)=I(x^r)$ and $A[x^r] = A[\bar x^r]$.
\end{enumerate}
\end{lemma}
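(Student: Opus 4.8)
The plan is to exploit the continuity of the Minkowski gauge $\rho_F$ (see \cite[Proposition~6.18]{mordukhovich2023easy}) together with the hypothesis that each $L_i(\bar x)$ is a singleton, which forces a strict separation in the defining maxima of $h_i$. First I would fix $i\in\{1,\ldots,m\}$ and write $L_i(\bar x)=\{\ell_i\}$. Since the maximum in \eqref{key7} is attained only at $\ell_i$, for every $\ell\neq\ell_i$ we have $h_{i,\ell}(\bar x)<h_{i,\ell_i}(\bar x)$; set
\begin{equation*}
\delta_i=\min_{\ell\neq\ell_i}\bigl(h_{i,\ell_i}(\bar x)-h_{i,\ell}(\bar x)\bigr)>0,
\qquad \delta=\min_{i=1,\ldots,m}\delta_i>0.
\end{equation*}
Each $h_{i,\ell}$ is a finite sum of maps $x\mapsto\rho_F(x^r-a^i)$, hence continuous on $\R^{nk}$; therefore there is $\varepsilon_1>0$ such that $\|x^r-\bar x^r\|<\varepsilon_1$ for all $r$ implies $|h_{i,\ell}(x)-h_{i,\ell}(\bar x)|<\delta/3$ for every pair $(i,\ell)$. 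For such $x$ and any $\ell\neq\ell_i$,
\begin{equation*}
h_{i,\ell}(x)<h_{i,\ell}(\bar x)+\tfrac{\delta}{3}
\le h_{i,\ell_i}(\bar x)-\delta+\tfrac{\delta}{3}
< h_{i,\ell_i}(x)+\tfrac{\delta}{3}-\delta+\tfrac{\delta}{3}
< h_{i,\ell_i}(x),
\end{equation*}
so $h_i(x)=h_{i,\ell_i}(x)$ and the maximum is attained only at $\ell_i$; that is, $L_i(x)=\{\ell_i\}=L_i(\bar x)$, proving (a).

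For (b), I would translate the conclusion of (a) through the identity \eqref{key5}, which says $L_i(x)=\{\ell : a^i\in A[x^\ell]\}$. Part (a) then reads: for every $i$, $a^i\in A[x^\ell]$ if and only if $a^i\in A[\bar x^\ell]$, for each $\ell$. Consequently $A[x^\ell]=A[\bar x^\ell]$ for all $\ell$, and $I(x^\ell)=\{i\in I : a^i\in A[x^\ell]\}=\{i\in I: a^i\in A[\bar x^\ell]\}=I(\bar x^\ell)$ by the definition \eqref{Iell}. Taking $\varepsilon=\varepsilon_1$ completes the argument.

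I do not anticipate a genuine obstacle here; the one point requiring slight care is organizing the uniformity of the $\varepsilon$ over all $i$ and $\ell$ simultaneously — this is handled cleanly by taking a single $\delta=\min_i\delta_i$ and invoking continuity of the finitely many functions $h_{i,\ell}$ on the common neighborhood. A minor bookkeeping subtlety is that when $k=1$ the index sets $L_i$ are trivially singletons and $h_{i,\ell}$ is an empty sum ($\equiv 0$), so the statement holds vacuously; it is worth a one-line remark but changes nothing. The essential mechanism is simply: a strict maximizer of a continuous (indeed, here piecewise-gauge) function remains the strict maximizer under small perturbations, and this is exactly what "singleton $L_i$" encodes.
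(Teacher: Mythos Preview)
Your proposal is correct and follows essentially the same approach as the paper: both arguments use that the singleton hypothesis forces strict inequalities (the paper phrases them via $\rho_F(\bar x^{\ell(i)}-a^i)<\rho_F(\bar x^\ell-a^i)$ using \eqref{key5}, you via $h_{i,\ell}(\bar x)<h_{i,\ell_i}(\bar x)$ using \eqref{key6}), then invoke continuity of the Minkowski gauge to preserve these strict inequalities in a neighborhood, and finally read off (b) from the identity \eqref{key5}. The only cosmetic difference is that the paper chooses an $\varepsilon_i$ per index $i$ and takes the minimum, whereas you first compute a uniform gap $\delta$ and then a single $\varepsilon_1$; both organizations are equivalent.
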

\begin{proof}
For every $i=1,...,m$, let $\ell(i)$ denote the unique element of $L_i(\bar{x})$. Then, we obtain  from the definition of $L_i(\bar x)$ that 
     \begin{eqnarray}\label{25'}
     \rho_F(\bar{x}^{\ell(i)}-a^i)<\rho_F(\bar{x}^{\ell}-a^i)\; \text{ for all }\ell \in \{1,\ldots,k\} \setminus\{\ell(i)\}.    
     \end{eqnarray}
By the continuity of the Minkowski function,  for every $i=1,\ldots,m$ there exists $\varepsilon_i>0$ such that the following inequality holds:
	\begin{equation}
		\label{key25}
		\rho_F(x^{\ell(i)}-a^i)<\rho_F(x^{\ell}-a^i)\;  \text{ for all  } \ell \in \{1,\ldots,k\} \backslash\{\ell(i)\},
	\end{equation}
	whenever $x=\left(x^1, \ldots, x^k\right) \in \mathbb{R}^{n k}$ satisfies $\|x^r-\bar{x}^r\|<\varepsilon_i$ for all $r=1,\ldots,k.$
 For each such element $x$, it follows from~\eqref{key25} that $L_i(x)=\{\ell_i\}=L_i(\bar x)$. 
 
 Let $\varepsilon=\min\{\varepsilon_1, \ldots, \varepsilon_m\}$. Fix any $x=(x^1, \ldots, x^k)$ such that $\|\bar x^r-x^r\|<\epsilon$ for all $r=1, \ldots, k$. Then, we see that (a) is satisfied. 
 
 Now, we will prove (b). Fix any $r=1, \ldots, k$ and any $i\in I(\bar x^r)$. By definition, $a^i\in A[\bar x^r]$, and hence $r\in L_i(\bar x)=L_i(x)$. This implies that $a^i\in A[x^r]$, or equivalently, $i\in I(x^r)$. Thus, $I(\bar x^r)\subset I(x^r)$. Since the reverse inclusion is also obvious, we see that $I(\bar x^r)=I(x^r)$, and hence $A(\bar x^r)=A(x^r)$. The proof is now complete. 
 \end{proof}

Now, we present necessary and sufficient conditions for the existence of a local optimal solution to problem~\eqref{maxminn}.

\begin{theorem}
	\label{theonec}
	Let $\bar{x}=(\bar{x}^1, \ldots, \bar{x}^k)\in \mathbb{R}^{nk}$. Suppose that the index set $L_i(\bar{x})$ is a singleton for every $i=1, \ldots, m$. Then, the following statements are equivalent:
	\begin{enumerate} 
		\item [{\rm (a)}] $\bar{x}=(\bar{x}^1, \ldots, \bar{x}^k)$ is a local optimal solution to problem (\ref{maxminn}).
		\item [{\rm (b)}] For every $ \ell=1,\ldots,k$, if $A[\bar{x}^\ell]$ is nonempty, then $\bar{x}^\ell$ is a (global) optimal solution of the generalized single-source
		      Weber problem~\eqref{sWb} defined by the data set $A[\bar{x}^\ell]$.
	\end{enumerate}
\end{theorem}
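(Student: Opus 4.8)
The plan is to exploit \cref{lemmamoi} to localize the analysis: under the singleton assumption on each $L_i(\bar x)$, there is $\varepsilon>0$ such that for every $x=(x^1,\ldots,x^k)$ with $\|x^r-\bar x^r\|<\varepsilon$ for all $r$, the natural clustering does not change, i.e.\ $A[x^r]=A[\bar x^r]$ and $I(x^r)=I(\bar x^r)$ for all $r$. On this neighborhood the minima in the definition of $f_F$ are resolved by a fixed assignment of demand points to components, so $f_F$ decouples as a sum over $\ell$:
\begin{equation*}
	f_F(x)=\sum_{\ell=1}^{k}\ \sum_{i\in I(\bar x^\ell)}\rho_F(x^\ell-a^i)=\sum_{\ell=1}^{k}\varphi^\ell_F(x^\ell),
\end{equation*}
where $\varphi^\ell_F$ is the objective of \eqref{sWb} associated with $A[\bar x^\ell]$ (with the convention that the empty sum is $0$ when $A[\bar x^\ell]=\emptyset$, in which case that component is unconstrained and irrelevant). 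This identity is the technical heart of the argument, and I would prove it first, carefully: the point is that for $x$ in the $\varepsilon$-ball, $\min_{r}\rho_F(x^r-a^i)=\rho_F(x^{\ell(i)}-a^i)$ with $\ell(i)$ the unique index in $L_i(\bar x)$, and then regrouping the sum over $i$ according to which $\ell(i)$ each $i$ falls into gives exactly the decoupled form. Note also $f_F(\bar x)=\sum_{\ell}\varphi^\ell_F(\bar x^\ell)$.

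Granting the decoupling, both implications are short. For (a)$\Rightarrow$(b): suppose $\bar x$ is a local minimizer, fix $\ell$ with $A[\bar x^\ell]\neq\emptyset$, and let $y\in\R^n$ be arbitrary; I want $\varphi^\ell_F(\bar x^\ell)\le\varphi^\ell_F(y)$. Since $\varphi^\ell_F$ is convex (a finite sum of the convex functions $\rho_F(\cdot-a^i)$), it suffices to prove the inequality for $y$ near $\bar x^\ell$, because a convex function that has a local minimum at a point attains its global minimum there. So take $y$ with $\|y-\bar x^\ell\|<\varepsilon$ and form $x$ by replacing the $\ell$-th component of $\bar x$ by $y$; then $\|x^r-\bar x^r\|<\varepsilon$ for all $r$, local optimality of $\bar x$ gives $f_F(\bar x)\le f_F(x)$, and subtracting the common terms $\sum_{r\neq\ell}\varphi^r_F(\bar x^r)$ from both sides via the decoupling yields $\varphi^\ell_F(\bar x^\ell)\le\varphi^\ell_F(y)$, as desired.

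For (b)$\Rightarrow$(a): assume each $\bar x^\ell$ with $A[\bar x^\ell]\neq\emptyset$ globally minimizes $\varphi^\ell_F$. For components with $A[\bar x^\ell]=\emptyset$ the corresponding summand in the decoupled expression is identically $0$, hence trivially minimized at $\bar x^\ell$. Therefore for any $x$ in the $\varepsilon$-ball,
\begin{equation*}
	f_F(x)=\sum_{\ell=1}^{k}\varphi^\ell_F(x^\ell)\ \ge\ \sum_{\ell=1}^{k}\varphi^\ell_F(\bar x^\ell)=f_F(\bar x),
\end{equation*}
so $\bar x$ is a local optimal solution to \eqref{maxminn}. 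The main obstacle is the first step: one must handle with care the possibility that some $A[\bar x^\ell]$ is empty, make sure the regrouping of the double sum is bookkept correctly (the sets $I(\bar x^1),\ldots,I(\bar x^k)$ partition $\{1,\ldots,m\}$ precisely because each $L_i(\bar x)$ is a singleton), and invoke \cref{lemmamoi}(b) to guarantee the assignment is constant on the whole $\varepsilon$-ball rather than only at $\bar x$. Everything after that is elementary convex analysis.
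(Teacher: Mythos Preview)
Your proposal is correct and follows essentially the same approach as the paper: both rely on \cref{lemmamoi} to freeze the assignment $i\mapsto\ell(i)$ on an $\varepsilon$-ball, rewrite $f_F$ there as the separable sum $\sum_\ell\varphi^\ell_F(x^\ell)$, and then argue each direction by varying one component at a time (using convexity of $\varphi^\ell_F$ to upgrade local to global in (a)$\Rightarrow$(b)). The only difference is organizational: you isolate the decoupling identity once up front and reuse it, whereas the paper re-derives the relevant pieces inside each implication.
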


\begin{proof}
	(a) $\Longrightarrow$ (b): Suppose that  $A[\bar{x}^{ \ell}] \neq \emptyset$ for some $\ell \in \{1,...,k\}$. Since $\bar x$ is a local optimal solution to problem~\ref{maxminn}, by Lemma~\ref{lemmamoi} there exists $\varepsilon >0$ such that
 \begin{equation}\label{localsolution}
      A[\bar{x}^{r}] = A[x^r], \;  I(\bar x^r)=I(x^r), \; \mbox{\rm and} \; 
     f_F(\bar x)\leq f_F(x),
 \end{equation}
 whenever $x=(x^1, \ldots, x^k)$ satisfies $\|x^r-\bar x^r\|<\epsilon$ for all $r=1, \ldots, k$. Take any $x^\ell\in \R^n$ such that $\|x^\ell-\bar{x}^\ell\|<\epsilon$.
 Let $\hat{x}=\left(\hat{x}^1, \ldots, \hat{x}^k\right)\in \mathbb {R}^{nk}$ be the vector given by $\hat{x}^\ell={x}^\ell$ and $\hat{x}^r=\bar{x}^r$ for all $r \in \{1,\ldots,k\} \setminus\{\ell\}$. Then, it follows from~\eqref{localsolution} that 
$$\begin{array}{ll}
  f_F(\bar x)&=\sum\limits_{i\in I(\bar x^\ell)}\left(\min\limits_{r=1,...,k}\rho_F(\bar x^r-a^i)\right)+\sum\limits_{i\notin I(\bar x^\ell)}\left(\min\limits_{r=1,...,k}\rho_F(\bar x^r-a^i)\right)  \\[0.2in]
     &=\sum\limits_{i\in I(\bar x^\ell)}\rho_F(\bar x^\ell-a^i)+\sum\limits_{i\notin I(\bar x^\ell)}\left(\min\limits_{r=1, \ldots,k}\rho_F(\bar x^r-a^i)\right)\\[0.2in]
     &\leq f_F(\hat x)\\
     &=\sum\limits_{i\in I(\bar x^\ell)}\left(\min\limits_{r=1, \ldots,k}\rho_F(\hat x^r-a^i)\right)+\sum\limits_{i\notin I(\bar x^\ell)}\left(\min\limits_{r=1, \ldots,k}\rho_F(\hat x^r-a^i)\right)
     \\[0.2in]
     &=\sum\limits_{i\in I( \hat x^\ell)}\left(\min\limits_{r=1, \ldots,k}\rho_F( \hat x^r-a^i)\right)+\sum\limits_{i\notin I(\bar x^\ell)}\left(\min\limits_{r=1, \ldots,k}\rho_F(\bar x^r-a^i)\right)\\[0.2in]
     &=\sum\limits_{i\in I(\hat x^\ell)}\rho_F( \hat x^\ell-a^i)+\sum\limits_{i\notin I(\bar x^\ell)}\left(\min\limits_{r\in\{1,...,k\}}\rho_F(\bar x^r-a^i)\right)\\[0.2in]
     &=\sum\limits_{i\in I(\bar x^\ell)}\rho_F( x^\ell-a^i)+\sum\limits_{i\notin I(\bar x^\ell)}\left(\min\limits_{r=1, \ldots,k}\rho_F(\bar x^r-a^i)\right).
\end{array}$$
It follows that $$\sum\limits\limits_{i\in I(\bar x^\ell)}\rho_F(\bar{x}^{ \ell}-a^i)\leq\sum\limits_{i\in I(\bar x^\ell)} \rho_F({x}^{\ell}-a^i) .$$
Therefore, $\bar x^{\ell}$ is local optimal solution to the generalized single-source
	Weber problem \eqref{sWb} defined by the data set $A[\bar{x}^{ \ell}]$. Since the objective function of this problem is convex, $\bar x^{\ell}$ is global optimal solution to the problem. This completes the proof of (a) $\Longrightarrow$ (b).

(b) $\Longrightarrow$ (a):
	For every $i=1,...,m$, let $\ell(i)$ denote the unique element of $L_i(\bar{x})$. By Lemma~\ref{lemmamoi}, there exists $\varepsilon>0$ such that for any $x=\left(x^1, \ldots, x^k\right) \in \mathbb{R}^{n k}$ satisfying $\|x^r-\bar{x}^r\|<\varepsilon$ for all $r =1,\ldots,k,$
	we have $L_i(x)=\{\ell(i)\}$ for every $i=1,\ldots,m$. This  implies that
	\begin{equation} \label{f1}
	    \rho_F(x^{\ell(i)}-a^i)=\min _{\ell=1,\ldots,k }\rho_F(x^\ell-a^i).
	\end{equation}
Furthermore, it follows from condition (b) that
	\begin{equation}\label{f2}
	    		\sum_{i \in I(\bar x^\ell)} \rho_F(\bar{x}^\ell-a^i) \leq \sum_{i \in I(\bar x^\ell)} \rho_F(x^\ell-a^i)
	\end{equation}
	for every $\ell=1,\ldots,k$ satisfying $A[\bar{x}^\ell] \neq \emptyset$. Hence, we obtain that
	$$
		\begin{aligned}
			f_F(x)=\sum_{i =1}^m\min _{\ell=1,\ldots,k}\rho_F(x^\ell-a^i)
			 & =\sum_{i =1}^m\rho_F(x^{\ell(i)}-a^i)\quad (\textrm{ by \eqref{f1}}) \\
			 & =\sum_{\ell =1}^k\left(\sum_{i \in I(\bar x^\ell)} \rho_F(x^{\ell(i)}-a^i)\right)             \\
			 & =\sum_{\ell =1}^k\left(\sum_{i \in I(\bar x^\ell)} \rho_F(x^{\ell}-a^i)\right)               \\
			 & \geq \sum_{\ell =1}^k\left(\sum_{i \in I(\bar x^\ell)} \rho_F(\bar{x}^{\ell(i)}-a^i)\right) \quad (\textrm{ by \eqref{f2}})\\
              &=f_F(\bar{x}),
		\end{aligned}
	$$
	which yields  $\bar{x}=\left(\bar{x}^1, \ldots, \bar{x}^k\right)\in S_F^{loc}$. Note that in the proof above we use the obvious fact that $\{1,...,m\}=\cup_{\ell=1}^k I[\bar x_\ell]$, as a disjoint union, and also the convention that $\sum_{i\in \emptyset}=0$. 
The proof is now complete.
\end{proof}

\begin{example}
    Consider problem (\ref{maxminn}) with $\rho_F(x) = \|x\|$ and 4 data points in $\R^2$ located at the vertices of the unit square: $a^1 = (0,0), \; a^2 = (1,0) \; a^3 = (0,1) \; a^4 = (1,1)$. We seek two centers $x^1$ and $x^2$. Observe that $\bar x = (\bar x^1 = (0.5,0), \bar x^2 = (0.5,1))$ is a local optimal solution to (\ref{maxminn}). Indeed, we find that $A[\bar x^1] =\{a^1,a^2\} \ne \emptyset$ and $A[\bar x^2] =\{a^3,a^4\} \ne \emptyset$. Then, a direct verification shows that $\bar x^1$ ($\bar x^2$, respectively) is a global optimal solution of the generalized single-source
		      Weber problem~\eqref{sWb} defined by the data set $A[\bar{x}^1]$ ($A[\bar{x}^2]$, respectively).   Furthermore, $L_1(\bar{x}) = L_2(\bar{x}) = \{1\}$ and $L_3(\bar{x}) = L_4(\bar{x}) =~\{2\}$, meaning that $L_i(\bar{x})$ is a singleton for $i=1,2$. Therefore, by Theorem \ref{theonec}, $\bar{x}$ is a local solution to problem (\ref{maxminn}), and the local optimal value is 2. However, $f_F((0.2,0.2),a^4) \approx 1.932 < 2$, which shows that $\bar{x}$ is not a global solution. 
    
\end{example}

\section{The Boosted DCA for Generalized Multi-source Weber Problems} \label{sec: Multifacility Location}
In this section, we use the adaptive BDCA in \cite{Artacho2020} to solve the generalized multi-source Weber problem for both the unconstrained and constrained cases. These variations extend the DCA introduced in (\cite{Nam2017}) for the unconstrained case and in (\cite{Nam2018}) for the constrained case. For the sake of completeness, we first recall the relevant definitions and formulations before presenting the algorithms.

\subsection{Unconstrained Problems} \label{SEc41}

We begin by rewriting the objective function of GMWP as a difference of convex decomposition. In fact, as shown in Subsection~\ref{subseclocal}, the objective function of GMWP can be rewritten as

\[
	f_F (x^{1},\ldots,x^{k}) = \sum_{i =1}^{m } \sum_{\ell =1}^{k } \rho_{F }(x^{\ell }-a^{i}) - \sum_{i =1}^{m } \max_{r=1,\ldots,k } \sum_{\ell =1,\;\ell\neq r }^{k } \rho_{F }(x^{\ell }-a^{i}).
\]
Using \cite[Proposition 3.1 ]{Nam2017} and following the method in \cite{Nam2017}, we see that \(f_F \) can be approximated by the following function:
\begin{small}
\begin{align*}
	f_\mu(x^1, \ldots, x^k) & =\frac{1}{2\mu}\sum_{i=1}^m\sum_{\ell=1}^k\left\|x^\ell-a^i\right\|^2                                                                                                                                          \\
	                        & -\left[\frac{\mu}{2}\sum_{i=1}^m\sum_{\ell=1}^k\left[ d\left(\frac{x^\ell-a^i}{\mu}; F^{\circ}\right)  \right] ^2+\sum_{i=1}^m\max_{r=1, \ldots, k}\sum_{\ell=1,\; \ell\neq r}^k \rho_{F }\left(x^\ell-a^i\right)\right]
\end{align*}
\end{small}
\noindent where the parameter $\mu>0$ controls the trade-off between smoothness and approximation accuracy, and $d(x;F^\circ)$ denotes the Euclidean distance from a point $x$ to $F^\circ$. Let $X$ be the $k \times n$ matrix whose rows are $x^{1}, \ldots, x^{k}$. Now, we define the following functions:
$$
	\begin{aligned}
		G_{\mu}\left(X\right) & =\frac{1}{2 \mu} \sum_{i=1}^{m} \sum_{\ell=1}^{k}\left\|x^{\ell}-a^{i}\right\|^{2},  \\
		H_{\mu}\left(X\right) & =\frac{\mu}{2} \sum_{i=1}^{m} \sum_{\ell=1}^{k}\left[d\left(\frac{x^{\ell}-a^{i}}{\mu} ; F^{\circ}\right)\right]^{2}+\sum_{i=1}^{m} \max _{r=1, \ldots, k} \sum_{\ell=1, \ell \neq r}^{k} \rho_{F}\left(x^{\ell}-a^{i}\right).
	\end{aligned}
$$
It follows that
$$f_\mu(X) = G_{\mu}\left(X\right) - H_{\mu}\left(X\right).$$

Next, we revisit the process of finding the subdifferential of $f_\mu$ established in~\cite{Nam2017}. For this purpose, we consider the inner product space $\mathcal{M}$ of all $k \times n$ matrices, with the inner product of $A, B \in \mathcal{M}$ defined as
$$
	\langle A, B\rangle =\operatorname{trace}\left(A B^{T}\right)=\sum_{i=1}^{k} \sum_{j=1}^{n} a_{i j} b_{i j}.
$$

\noindent The Frobenius norm induced by this inner product is defined by
$$
	\|X\|_{\mathcal{F}}=\sqrt{\sum_{i=1}^{k} \sum_{j=1}^{n} x^2_{ij}}.
$$

\noindent Then, we observe that
$$
	\begin{aligned}
		G_{\mu}(X) & =\frac{1}{2 \mu} \sum_{\ell=1}^{k} \displaystyle\sum_{i=1}^{m}\left(\left\|x^{\ell}\right\|^{2}-2\left\langle x^{\ell}, a^{i}\right\rangle+\left\|a^{i}\right\|^{2}\right) \\
		           & =\frac{1}{2 \mu}\left(m\|X\|_{\mathcal{F}}^{2}-2\langle X, B\rangle+k\|A\|^{2}\right)                                                                                      \\
		           & =\frac{m}{2 \mu}\|X\|_{\mathcal{F}}^{2}-\frac{1}{\mu}\langle X, B\rangle+\frac{k}{2 \mu}\|A\|^{2},
	\end{aligned}
$$
where $A$ is the $m \times n$ matrix whose rows are $a^{1}, \ldots, a^{m}$, and $B$ is the $k \times n$ matrix with $a=\sum_{i=1}^{m} a^{i}$ for every row. The function $G_{\mu}$ is differentiable, with the gradient given by
$$
	\nabla G_{\mu}(X)=\frac{m}{\mu} X-\frac{1}{\mu} B.
$$
Since $G_\mu$ is continuous, $X=\nabla G_\mu^*(Y)$ if and only if $Y=\nabla G_{\mu}(X)$, and hence we have
$$\nabla G_\mu^*(Y)=\frac{1}{m}(B+\mu Y).$$
\noindent Next, we define
$$
	\begin{array}{lll}
		H_{\mu}^{1}(X) & =\dfrac{\mu}{2} \displaystyle\sum_{i=1}^{m} \sum_{\ell=1}^{k}\left[d\left(\frac{x^{\ell}-a^{i}}{\mu} ; F^{\circ}\right)\right]^{2}, \\
		H^{2}(X)       & = \displaystyle\sum_{i=1}^{m} \max _{r=1, \ldots, k} \sum_{\ell=1, \ell \neq r}^{k} \rho_{F}\left(x^{\ell}-a^{i}\right).
	\end{array}
$$
\noindent Then, $H_\mu(X) = H^1_\mu + H^2$. It follows directly from Proposition 3.1 in \cite{Nam2017} that

$$
	\begin{aligned}
		 & \frac{\partial H_{\mu}^{1}}{\partial x^{1}}(X)=\frac{x^{1}-a^{1}}{\mu}-P\left(\frac{x^{1}-a^{1}}{\mu} ; F^{\circ}\right)+\cdots+\frac{x^{1}-a^{m}}{\mu}-P\left(\frac{x^{1}-a^{m}}{\mu} ; F^{\circ}\right) \\
		 & \vdots                                                                                                                                                                                                    \\
		 & \frac{\partial H_{\mu}^{1}}{\partial x^{k}}(X)=\frac{x^{k}-a^{1}}{\mu}-P\left(\frac{x^{k}-a^{1}}{\mu} ; F^{\circ}\right)+\cdots+\frac{x^{k}-a^{m}}{\mu}-P\left(\frac{x^{k}-a^{m}}{\mu} ; F^{\circ}\right)
	\end{aligned}
$$
where $P(x;F^\circ)$ denotes the Euclidean projection of a point $x$ onto $F^\circ$. The gradient $\nabla H_{\mu}^{1}(X)$ is defined by
$$\nabla H_{\mu}^{1}(X) =
	\begin{pmatrix}
		\frac{\partial H_{\mu}^{1}}{\partial x^{1}}(X) \\
		\vdots                                          \\
		\frac{\partial H_{\mu}^{1}}{\partial x^{k}}(X)
	\end{pmatrix}
	_{k\times n}$$
In what follows, we outline a procedure  to find a subgradient of $H^{2}$ at $X$.\\
$\bullet$ Define the function
	      $$Q^{i, r}(X)=\sum_{\ell=1, \ell \neq r}^{k} \rho_{F}\left(x^{\ell}-a^{i}\right)\textrm{
			      and
		      }Q^{i}(X)=\max _{r=1, \ldots, k} Q^{i, r}(X).$$
	      By these definitions, $\partial H^2 (X) \subseteq \sum_{i=1}^m \partial Q^i (X) $. Therefore, to find $\partial H^2(X)$, we need to locate a particular subgradient $V_{i} \in \partial Q^{i}(X)$ for each $i=1,\ldots,m$. As a result, a subgradient of $H^2$ can be given by $\sum_{i=1}^m V_i$. \\
$\bullet$ Now, to find $V_i$ for each $i=1,\ldots, m$, we  proceed in two steps:
	      \begin{itemize}
		      \item[$\circ$] First, find an index $\ell^* \in \{1,\ldots,k\}$ such that
		            $$Q^{i}(X)=Q^{i, \ell^*}(X) \;  \text{for} \,\, i=1,\ldots, m.$$
		      \item[$\circ$] Next, for each $i=1,\ldots,m$, find $V_{i} \in \partial Q^{i}(X)$ as follows:
		        \begin{itemize}
			            \item The $\ell^{*th}$ row of $V_i$ denoted $v_i^{l^*}$ is zero;
			            \item For the $j^{th}$ row of $V_i$, where $j \ne \ell^*$, denoted $v_i^j$, it is given by
			                 
                      $$v_i^j =
				                  \begin{cases}
					                  \partial \rho_F(x^j - a^i) & \text{if} \;  x^j \ne a^i, \\
					                  0 &\text{if} \;  x^j = a^i.
				                  \end{cases}
			                  $$
		            \end{itemize}
	      \end{itemize}
\noindent With the necessary definitions and calculations established, we now present our adaptive BDCA for the Generalized Multi-Source Weber Problem, utilizing the technique from \cite{Nam2018} of gradually decreasing the smoothing parameter $\mu$.

\begin{algorithm}[H]
	\caption{Adaptive BDCA for solving (\ref{maxminn}) } \label{BDCA unconstrained}
	\begin{algorithmic}
		\Procedure{}{}
		\State Data: Given $a^1, \ldots, a^m \in \mathbb{R}^n$. Choose a reasonable number $k$, $1\le k\le m$.
		\State Choose $X_0\in \mathcal{M}, N\in  \mathbb{N}$, $\mu>\mu_{f} >0, \alpha >0, 0<\beta <1$, and $0<\delta <1$.
		\While{ $\mu>\mu_{f}$ }
		\For{$p=0,\ldots,N$}
		\State \underline{Step 1:} Find $Y_p = Y^1_p + Y^2_p$ where
        \vspace{-1em}
		$$\begin{array}{ll}
				Y^1_p & = \nabla H^1_\mu (X_p), \\
				Y^2_p & \in \partial H^2(X_p).
			\end{array} $$
        Find $Z_{p} = \nabla G^*_\mu(Y_p) =\frac{1}{m}\left(B+\mu Y_p\right)$.
		\State \underline{Step 2:} Set $d_p = Z_{p} - X_{p}$. If $d_p=0$ then Stop. 
		\State \underline{Step 3:} Find an initial $\lambda_p$ using \cref{algorithm: self-adaptive step size}  
		\While{$f_F(Z_{p} + \lambda_p d_p) > f_F (Z_{p}) - \alpha \lambda_p^2 \Vert d_p \Vert^2$}
		\State $\lambda_p = \beta \lambda_p$
		\EndWhile
		\State Set  $X_{p+1} = Z_{p} + \lambda_p d_p$. If $X_{p+1} = X_p$ then Stop.
		\EndFor
		\EndWhile
		\State Reassign $\mu = \delta \mu$
		\EndProcedure
		\State \Output{$X_{p+1}$}
	\end{algorithmic}
\end{algorithm}

Note that if $\bar{\lambda}_p =0$, the aBDCA is identical to the DCA introduced  in \cite{Nam2017}.

\subsection{Constrained Problems}\label{sec: MLP}

In this subsection, we consider a version of the generalized multi-source Weber problem with constraints, also known as the multi-facility location problem with constraints, as discussed in \cite{Nam2018}. We introduce an algorithm in \cite{Artacho2020} that is designed to reduce the number of iterations and computational time for solving these types of problems. Given a set of $m$ data-target points $a^1, a^2, \ldots, a^m$ in $\R^n$, our goal is to find $k$ centers $x^\ell$, each  belonging to constraint sets $\bigcap_{i=1}^q{\O_i^\ell}$ for $l=1,..,k$, such that the sum of the smallest Minkowski distances from these centers to the data-target points is minimized. We assume, without loss of generality, that the number of constraints is the same for each center. Then the problem can be defined as follows:
\begin{equation}
	\label{MF_problem}
	\begin{array}{ll}
		\min                   & f_F (x^{1}, \hdots, x^{k}) = \displaystyle\sum\limits_{i =1}^{m }  \left(\min\limits_{\ell=1,\ldots,k }\rho_{F }(x^{\ell }-a^{i})\right) \\
		[0.1in]
		\mbox{\rm subject to } & x^\ell\in\displaystyle\bigcap_{j=1}^q \O_j^\ell  \;  \text{ for} \, \ell=1, \ldots, k.
	\end{array}
\end{equation}
To tackle this problem, we proceed in two steps: first, we employ the penalty method to transform the constrained problem into an unconstrained one as shown in Section 6 of \cite{Nam2018}, and then we apply Nesterov's smoothing method from \cite{Nam2017} to approximate the objective function. For more details, the problem \eqref{MF_problem} is equivalent to the following problem:
\begin{small}
\begin{equation} \label{MF_problemunconstrained}
		\begin{array}{ll}
			\min  f_\tau (x^{1}, \hdots, x^{k}) & = \displaystyle\sum\limits_{i =1}^{m }  \left(\min\limits_{\ell=1,\hdots,k }\rho_{F }(x^{\ell }-a^{i})\right) + \frac{\tau}{2}\sum\limits_{\ell=1}^k\sum\limits_{j=1}^q\left[d(x^\ell; \Omega_{j}^\ell)\right]^2 \\
			[0.2in]
			\text{\rm subject to }              & x^\ell\in\mathbb{R}^n\; \text{\rm for }\ell=1, \ldots, k,
		\end{array}
\end{equation}
\end{small}
where $\tau>0$ is the penalty parameter. Now, by using Nesterov's smoothing technique from \cite{Nam2017} to approximate the objective function $f_\tau$, we obtain a new DC function that is suitable for using the DCA.

\begin{small}
	\begin{equation*}
		\begin{array}{ll}
			 & f_{\tau, \mu}\left(x^1, \ldots, x^k\right)= \left( \frac{1}{2\mu} \sum\limits_{i=1}^{m}\sum\limits_{\ell =1}^{k} \| x^\ell - a^i \|^2  + \frac{\tau q}{2} \sum\limits_{\ell=1}^k\|x^\ell\|^2 \right)- \\
			 & \left(  \frac{\mu}{2} \sum\limits_{i=1}^{m}\sum\limits_{\ell =1}^{k}\left[d\left(\frac{x^\ell - a^i}{\mu};F^\circ\right)\right]^2
			+   \sum\limits_{i=1}^m \max\limits_{r=1, \ldots,k}\sum\limits_{\ell=1, \ell\neq r}^k {\|x^\ell-a^i\|} \;
			+ \frac{\tau}{2} \sum\limits_{\ell=1}^k \sum\limits_{j=1}^q  \ph_{{\O}_j^{\ell}}\left(x^\ell\right) \right),

		\end{array}
	\end{equation*}
\end{small}

\noindent where $\mu$ is the smoothing parameter, and for every $\ell\in\{1,\ldots,k\}$ and every $j~\in \{1,\ldots,q\}$, the function $\ph_{\O^\ell_j}$ is defined as:
\begin{equation*}
	\ph_{\O^\ell_j}(x^\ell)=2\sup \left\{ \la  x^\ell, w \ra -\frac{1}{2}\|w\|^2 \; |\;    w\in \O^\ell_j \right\}.
\end{equation*}
The subgradients and subdifferentials of the DC decomposition are detailed in~\cite{Nam2018}. For ease of reference, we provide the relevant formulas for these quantities here without further explanation. The difference between the objective function $f_{\tau,\mu}$ in the constrained problem, and $f_\mu$ in the unconstrained problem arises from two terms involving $\tau$. Thus, we define the objective function as $f_{\tau,\mu} = G_{\tau,\mu} - H_{\tau,\mu}$, where $G_{\tau,\mu} = G_\mu + G_\tau$ and $H_{\tau,\mu} = H_\mu + H_\tau$. Here, $G_\mu, H_\mu$ are defined as in Section \ref{SEc41}, while $G_\tau = \dfrac{\tau q}{2} \sum\limits_{\ell=1}^k\|x^\ell\|^2$ and $H_\tau = \frac{\tau}{2} \sum_{\ell=1}^k \sum_{j=1}^q  \ph_{{\O}_j^{\ell}}(x^\ell)$. To minimize the objective function, we need to find the gradients of $G_\tau$ and $H_\tau$, which, as shown in \cite{Nam2018}, are $\nabla G_\tau = \tau q X$ and $\nabla H_\tau = \tau U$. Here, $X$ is the matrix defined in Section \ref{SEc41} and $U$ is the the matrix whose row is $\sum_{i=1}^q P(x^\ell; \Omega^\ell_i)$. Therefore, the gradients and subgradients are given by
$$\nabla G_{\tau,\mu} = \nabla G_{\mu} + \nabla G_{\tau}  \;  \text{and} \;  \partial H_{\tau,\mu} = \partial H_{\mu} + \nabla H_{\tau}. $$
Using the same arguments as in Subsection \ref{sec: MLP}, we have
$$
	\nabla G_{\tau,\mu}^*(Y)=\dfrac{1}{m+\mu\tau q}(B+\mu Y).
$$
Using the technique from \cite{Nam2018} of gradually decreasing and increasing the values of the smoothing parameter $\mu$, and penalty parameter  $\tau$ respectively, we now present the adaptive BDCA for the constrained problem.
\begin{algorithm}[H]
	\caption{Adaptive BDCA for solving (\ref{MF_problem}) } \label{BDCA constrained}
	\begin{algorithmic}
		\Procedure{}{}
		\State Data: Given $a^1, \ldots, a^m \in \mathbb{R}^n$ and $\{ \O_j^\ell\}$ for $j=1,\ldots,q$ and $\ell=1,\ldots,k$.
		\State Choose $X_0\in \mathcal{M}, N\in  \mathbb{N}$, $\mu>\mu_{f} >0,  \tau_{f} > \tau >0, \alpha >0$, $0<\beta <1$, $\sigma>1$, and $0< \delta <1$.
		\While{ $\mu>\mu_{f} $ and $\tau < \tau_{f} $ }
		\For{$p=0,\ldots,N$}
		\State \underline{Step 1:} Find $Y_p = Y^1_p + Y^2_p + Y^3_p$ where
		$$
			\begin{array}{lll}
				Y^1_p & = \nabla H^1_\mu (X_p), \\
				Y^2_p & \in \partial H^2(X_p) ,   \\
				Y^3_p & =\tau U.
			\end{array}
		$$
		Find $Z_{p} = \dfrac{1}{m+\mu\tau q}\left(B+\mu Y_p\right)$.
		\State \underline{Step 2:} Set $d_p = Z_{p} - X_p$. If $d_p=0$ then Stop. 
		\State \underline{Step 3:} Find an initial $\lambda_p$ using \cref{algorithm: self-adaptive step size}
		\While{$f_F(Z_{p} + \lambda_p d_p) > f_F (Z_{p}) - \alpha \lambda_p^2 \Vert d_p \Vert^2$}
		\State $\lambda_p = \beta \lambda_p$
		\EndWhile
		\State Set  $X_{p+1} = Z_{p} + \lambda_p d_p$. If $X_{p+1} = X_p$ then Stop.
		\EndFor
		\EndWhile
		\State Reassign $\tau=\sigma \tau$ and $\mu = \delta \mu$
		\EndProcedure
		\State \Output{$X_{p+1}$}
	\end{algorithmic}
\end{algorithm}
\section{Numerical Results}\label{Results}
In order to obtain practical benefits to our adaptive BDCA, we introduce a `skipping' step in Steps 2 and 3 of the Algorithms \ref{BDCA unconstrained} and \ref{BDCA constrained}, not present in the adaptive BDCA of \cite{Artacho2020}. This modification, as our numerical experiments show, is key to improving the generalized multi-source Weber problems by avoiding unnecessary line searches.

\begin{algorithm}[H]
	\caption{Adaptive BDCA for solving (\ref{maxminn}) with a `skipping' step  }
	\label{BDCA unconstrained - 2}
	\begin{algorithmic}
		\Procedure{}{}
		\State Data: Given $a^1, \ldots, a^m \in \mathbb{R}^n$. Choose a reasonable number $k$, $1\le k\le m$.
		\State Choose $X_0\in \mathcal{M}, N\in  \mathbb{N}$, $\mu> \mu_{f} >0, \lambda_0 > \lambda_{f}>0, \alpha >0$, $0<\beta <1$, and $0< \delta <1$.
		\While{ $\mu>\mu_{f}$ }
		\For{$p=0,\ldots,N$}
		\State \underline{Step 1:} Find $Y_p = Y^1_p + Y^2_p$ where
		$$
			\begin{array}{cc}
				Y^1_p & = \nabla H^1_\mu (X_p), \\
				Y^2_p & \in \partial H^2(X_p).
			\end{array}
		$$
		Find $Z_{p} = \frac{1}{m}\left(B+\mu Y_p\right)$.
		\If{$\|Z_{p}-X_{p}\|_\mathcal{F} > 10^{-6}$}
		\State \underline{Step 2:} Set $d_p = Z_{p} - X_{p}$.
		\If{$\lambda_{p} > \lambda_{f}$}
		\State \underline{Step 3:} Find an initial $\lambda_p$ using \cref{algorithm: self-adaptive step size}

		\While{$f_F(Z_{p} + \lambda_p d_p) > f_F (Z_{p}) - \alpha \lambda_p^2 \Vert d_p \Vert^2$}
		\State Reassign $\lambda_p = \beta \lambda_p$
		\EndWhile
		\State $X_{p+1} = Z_{p} + \lambda_p d_p$.
		\Else
		\State $X_{p+1} = Z_{p} $.
		\EndIf
		\Else
		\State Return $X_{p+1} = Z_{p} $
		\EndIf
		\EndFor
		\EndWhile
		\State Reassign $\mu = \delta \mu$
		\EndProcedure
		\State \Output{$X_{p+1}$}
	\end{algorithmic}
\end{algorithm}

\begin{algorithm}[H]
	\caption{Adaptive BDCA for solving (\ref{MF_problem}) with a `skipping' step} \label{BDCA constrained - 2}
	\begin{algorithmic}
		\Procedure{}{}
		\State Data: Given $a^1, \ldots, a^m \in \mathbb{R}^n$ and $\{ \O_j^\ell\}$ for $j=1,\ldots,q$ and $\ell=1,\ldots,k$.
		\State Choose $X_0\in \mathcal{M}, N\in  \mathbb{N}$, $\mu>\mu_{f} >0,  \tau_{f} > \tau >0, \lambda_0 > \lambda_{f}>0, \alpha >0$, $0<\beta <1$, $\sigma>1$, and $0< \delta <1$.
		\While{ $\mu>\mu_{f} $ and $\tau < \tau_{f} $ }
		\For{$p=0,\ldots,N$}
		\State \underline{Step 1:} Find $Y_p = Y^1_p + Y^2_p + Y^3_p$, where

		$$
			\begin{array}{lll}
				Y^1_p & = \nabla H^1_\mu (X_p), \\
				Y^2_p & \in \partial H^2(X_p),  \\
				Y^3_p & =\tau U.
			\end{array}
		$$
		Find $Z_{p} = \frac{1}{m+\mu\tau q}\left(B+\mu Y_p\right)$.
		\If{$\|Z_{p}-X_{p}\|_\mathcal{F} > 10^{-6}$}
		\State \underline{Step 2:} Set $d_p = Z_{p} - X_{p}$.
		\If{$\lambda_{p} > \lambda_{f}$}
		\State \underline{Step 3:} Find an initial $\lambda_p$ using \cref{algorithm: self-adaptive step size}

		\While{$f_F(Z_{p} + \lambda_p d_p) > f_F (Z_{p}) - \alpha \lambda_p^2 \Vert d_p \Vert^2$}
		\State Reassign $\lambda_p = \beta \lambda_p$
		\EndWhile
		\State $X_{p+1} = Z_{p} + \lambda_p d_p$.
		\Else
		\State $X_{p+1} = Z_{p} $.
		\EndIf
		\Else
		\State Return $X_{p+1} = Z_{p} $
		\EndIf
		\EndFor
		\EndWhile
		\State Reassign $\tau=\sigma \tau$ and $\mu = \delta \mu$
		\EndProcedure
		\State \Output{$X_{p+1}$}
	\end{algorithmic}
\end{algorithm}
We now implement and test  \cref{BDCA unconstrained,BDCA constrained,BDCA unconstrained - 2,BDCA constrained - 2} in MATLAB R2023a on an iMac (4.8 Ghz 8-Core i7, 32 GB DDR4). The parameters used are $\alpha = 0.05,\,\beta = 0.01,\gamma = 2, \tau=1,\mu=1, \tau_f=10^8, \mu_f=10^{-6}$ with the penalty growth parameter $\sigma$ for $\tau$, the smoothing decrement parameter $\delta$ for $\mu$, and $\lambda$ for the line search step detailed per example. All algorithms stop when $\|X_{p}-X_{p-1}\|_\mathcal{F}<10^{-6}$, then we update $\tau$ and $\mu$.
\begin{example}\label{Triangle_Example} We test a simple unconstrained example solvable by hand to verify \cref{BDCA unconstrained}. We consider just 3 data points in $ \R^2$, with $a^1 = (0,0), \ a^2 = (1,0), \ a^3 = (0,1)$, and with $\rho_F(x) = \|x\|_2$. We seek 2 centers, $x^1$ and $x^2$. Two global solutions, up to permutation of the centers, are: 
			$$(x^1,x^2) = (a^2,a^3/2) \text{ and } (x^1 ,x^2) =(a^3,a^2/2),$$
	with an objective function value of $1$. Given the small problem size, we test \cref{BDCA unconstrained} using $\lambda_{\text{start}} = 1$, $\delta=0.8$,  and random initial values within the box $[0,0.5]^2$. The computed global solutions, visualized in \Cref{fig: triangle_example visual}, confirms that the algorithm converges correctly. 
\end{example}
\begin{figure}[htb!]
	\centering
\includegraphics[width=0.32\textwidth]{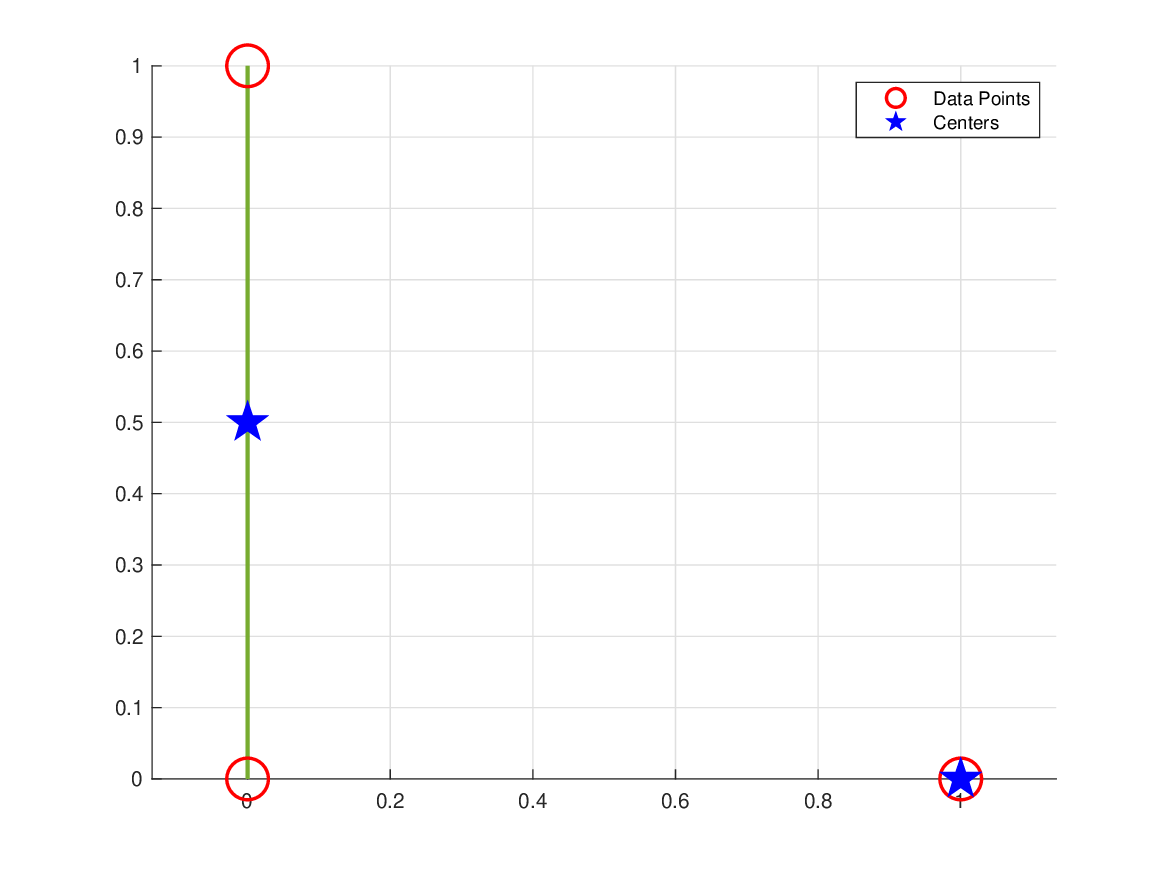}

 \caption{Visualization of the global solution to \Cref{Triangle_Example}.}
	\label{fig: triangle_example visual}
\end{figure}

\begin{example}\label{example: 4 points} We consider a simple unconstrained example to verify \Cref{BDCA unconstrained} using non-trivial Minkowski gauges and illustrate the differing global solutions they produce. We examine 4 data points in $\R^2$ on the unit square: $a^1 = (0,0), \; a^2 = (1,0) \; a^3 = (0,1) \; a^4 = (1,1)$, and seek two centers $x^1$ and $x^2$. The three gauges are considered: the usual $\rho_F(x) = \|x\|_2$, along with $\rho_F(x) = \|x\|_1$ and $\rho_F(x) = \|x\|_\infty$. A visualization of the computed global solutions using \Cref{BDCA unconstrained} is shown in \Cref{fig: 4 points}. \Cref{fig: 4points L2} corresponds to an exact global solution where one center is the Fermat-Torricelli point of the triangle formed by $a^1,a^2, a^4$, with other center at $a^3$. Permutations of the centers yield alternative global solutions. For $\rho_F(x) = \|x\|_1$, many global solutions exist, including $x^1 \in [a^3,a^4]$ and $x^2 \in [a^1,a^2]$ or $x^1 \in [a^1,a^4]$ and $x^2 \in [a^2,a^3]$. \Cref{fig: 4points L1} demonstrates \cref{BDCA unconstrained} converging to one of these exact global solutions. For $\rho_F(x) = \|x\|_\infty$, \cref{fig: 4points Linf} shows the exact global solution of $x^1 = (0.5,0.5)$ and $x^2 = a^1$, with other global solutions $x^2= a^i$ for $i=2,3,4$. In this example, we use $\lambda_{\text{start}} = 1$, $\delta=0.5$, and random initial values within the square. This example shows that Algorithm \ref{BDCA unconstrained} converges correctly for complex Minkowski gauges and demonstrates how different gauges can yield distinct solutions and clustering outcomes, even in simple cases.
    
    \begin{figure}[htb!]
        \centering
        \begin{subfigure}[b]{0.32\textwidth}
            \includegraphics[width=\textwidth]{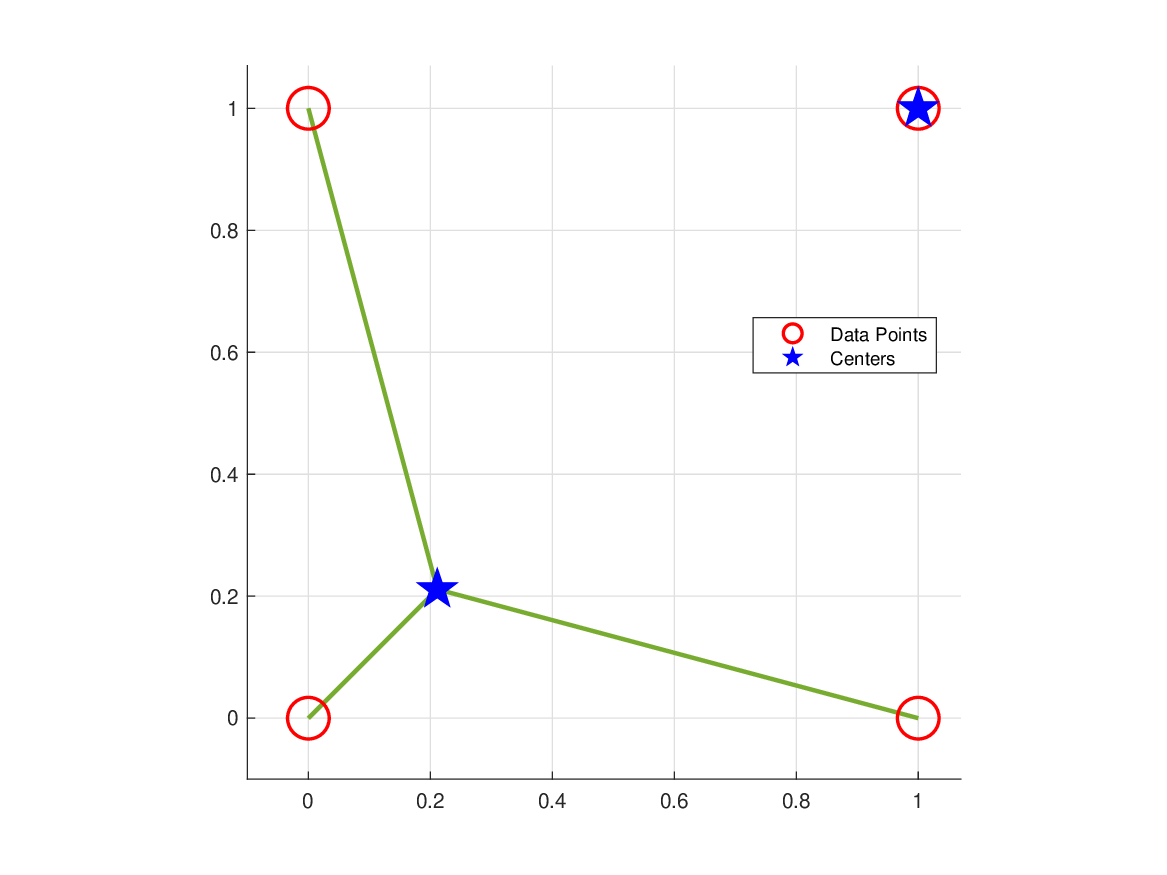}
            \caption{$\rho_F(x) = \|x\|_2$}
            \label{fig: 4points L2}
        \end{subfigure}
        \hfill
        \begin{subfigure}[b]{0.32\textwidth}
            \centering
            \includegraphics[width=\textwidth]{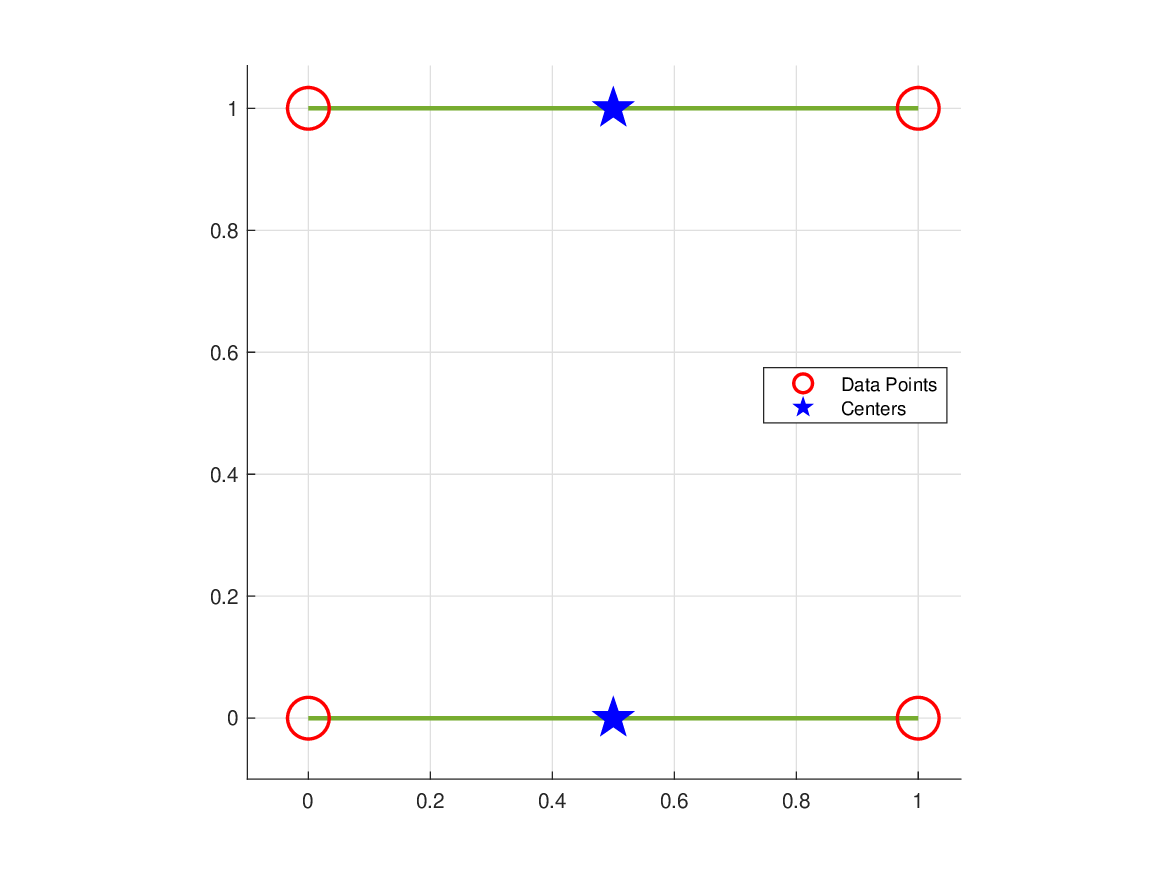}
            \caption{$\rho_F(x) = \|x\|_1$}
            \label{fig: 4points L1}
        \end{subfigure}
        \begin{subfigure}[b]{0.32\textwidth}
            \centering
            \includegraphics[width=\textwidth]{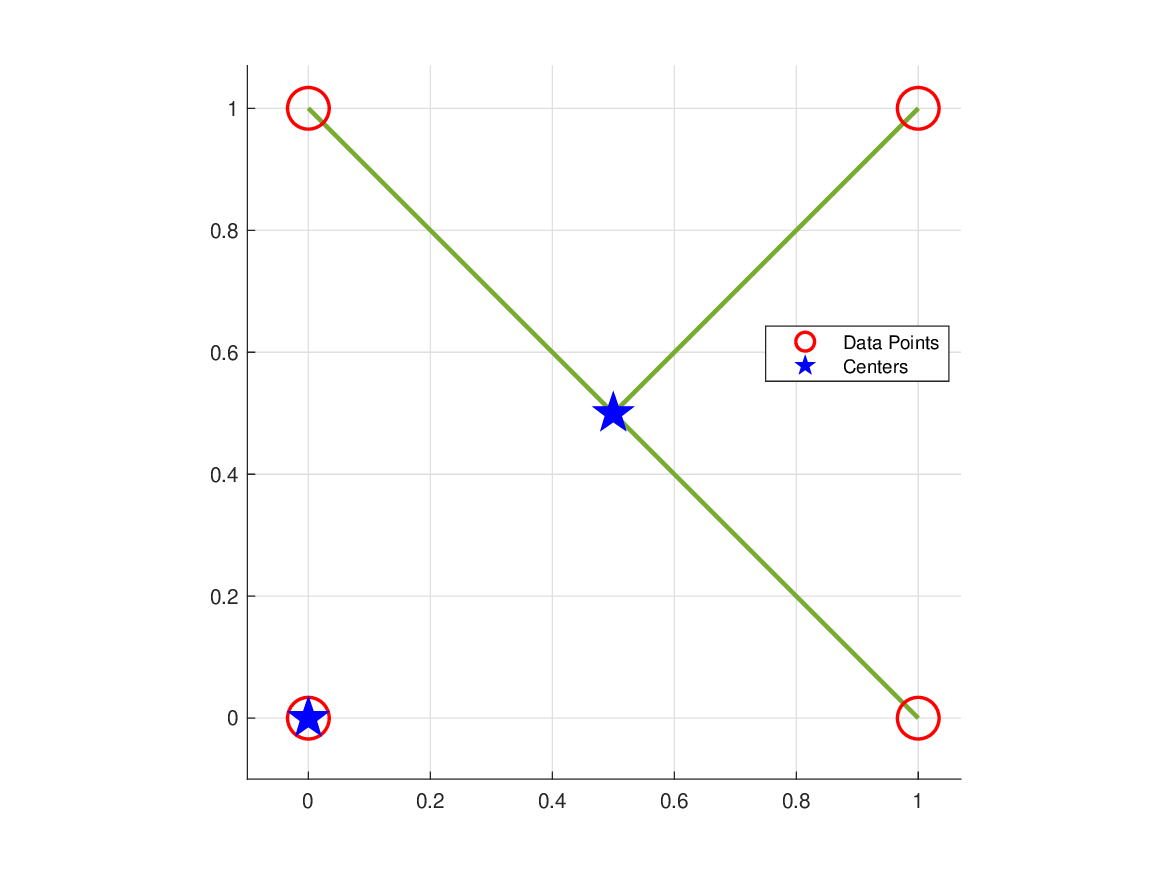}
            \caption{$\rho_F(x) = \|x\|_\infty$}
            \label{fig: 4points Linf}
        \end{subfigure}
        \caption{Visualization of the global solutions to \Cref{example: 4 points}.}
        \label{fig: 4 points}
    \end{figure}
\end{example}

\begin{example}\label{comparison tests} We revisit Example 3 from \cite{Nam2017}, which tests 6 real data sets: WINE (178 instances in $\R^{13}$, 3 different cultivars), IRIS (150 observations in $\R^4$, 3 iris types), PIMA (768 observations in $\R^8$, predicting diabetes), IONOSPHERE (351 radar observations in $\R^{34}$), and the latitude/longitude of 1217 US cities with 3 centers. A visualization of the US cities global solution with $\rho_F(x) = \|x\|_\infty$ is shown in \Cref{fig: USCity_Linf}. We compare Algorithm 5 from \cite{Nam2017} with \Cref{BDCA unconstrained,BDCA unconstrained - 2}, using the same $\mu$ values with suitable choices for $\lambda_{\text{skip}}$, $\lambda_{\text{start}} = 1$ and $\delta=0.5$. \Cref{table: 6 real datasets with skipping,table: 6 real datasets without skipping} present the average run-time and number of iterations over 100 runs, showing improvements across all norms. However, \Cref{table: 6 real datasets without skipping} shows that \Cref{BDCA unconstrained} has slower run-time despite fewer iterations. When using \Cref{BDCA unconstrained - 2}, see \Cref{table: 6 real datasets with skipping}, we have both run-time and iteration improvement. Further discussion is provided in the next example. 

 \begin{figure}[htb!]
	\centering
	\includegraphics[width=0.7\textwidth]{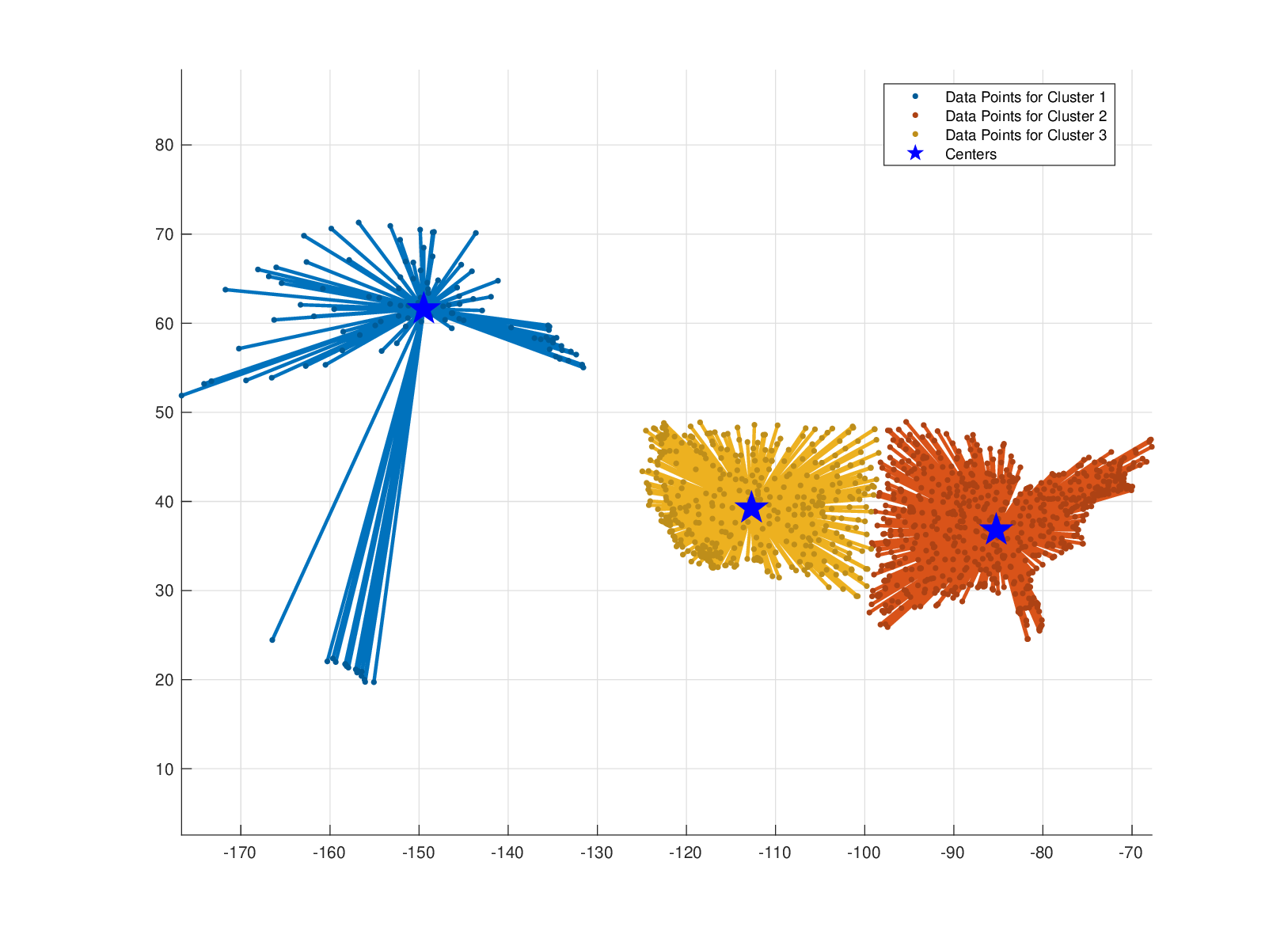}
	\caption{A visualization of the global solution to \Cref{comparison tests}.}
	\label{fig: USCity_Linf}
\end{figure}

\begin{table}[ht]
	\centering
  \footnotesize
  \resizebox{1\textwidth}{!}{%
	\begin{tblr}
		{
		column{6} = {c},
		column{7} = {c},
		column{8} = {c},
		column{9} = {c},
		column{10} = {c},
		cell{1}{5} = {c=2}{},
		cell{1}{7} = {c=2}{},
		cell{1}{9} = {c=2}{},
		cell{2}{2} = {c},
		cell{2}{3} = {c},
		cell{2}{4} = {c},
		cell{2}{5} = {c},
		cell{3}{2} = {c},
		cell{3}{3} = {c},
		cell{3}{4} = {c},
		cell{3}{5} = {c},
		cell{4}{2} = {c},
		cell{4}{3} = {c},
		cell{4}{4} = {c},
		cell{4}{5} = {c},
		cell{5}{2} = {c},
		cell{5}{3} = {c},
		cell{5}{4} = {c},
		cell{5}{5} = {c},
		cell{6}{2} = {c},
		cell{6}{3} = {c},
		cell{6}{4} = {c},
		cell{6}{5} = {c},
		cell{7}{2} = {c},
		cell{7}{3} = {c},
		cell{7}{4} = {c},
		cell{7}{5} = {c},
		vline{7,9,11} = {1}{},
		vline{5-6,8,10} = {1}{},
				vline{2-11} = {2}{},
				vline{-} = {3-7}{},
				hline{1} = {5-10}{},
				hline{2} = {2-10}{},
				hline{3-8} = {-}{},
			}
		           &            &    &   & $\rho_F(x) = \|x\|_2$ &      & $\rho_F(x) = \|x\|_1$ &      & $\rho_F(x) = \|x\|_\infty$ &      \\
		           & m          & n  & k & {Iteration                                                                                      \\
		Ratio}     & {Time                                                                                                                 \\
		Ratio}     & {Iteration                                                                                                            \\
		Ratio~}    & {Time                                                                                                                 \\
		Ratio~}    & {Iteration                                                                                                            \\
		Ratio~}    & {Time                                                                                                                 \\
		Ratio~}                                                                                                                            \\
		Wine       & 178        & 13 & 3 & 1.31                  & 0.65 & 1.54                  & 0.73 & 1.41                       & 0.58 \\
		Iris       & 150        & 4  & 3 & 1.69                  & 0.79 & 1.61                  & 0.61 & 1.23                       & 0.56 \\
		PIMA       & 768        & 8  & 2 & 1.59                  & 0.79 & 1.36                  & 0.65 & 2.18                       & 0.96 \\
		Ionosphere & 351        & 34 & 2 & 1.42                  & 0.82 & 1.05                  & 0.46 & 1.40                       & 0.65 \\
		USCity     & 1217       & 2  & 3 & 3.93                  & 1.54 & 1.80                  & 0.53 & 2.07                       & 0.79
	\end{tblr}
 }
\caption{DCA/aBDCA ratios for time and iterations in \Cref{comparison tests} using \cref{BDCA unconstrained}}
	\label{table: 6 real datasets without skipping}
\end{table}

\begin{table}[ht]
 \centering
 \footnotesize
 \resizebox{1\textwidth}{!}{%
	
	\begin{tblr}
		{
		column{6} = {c},
		column{7} = {c},
		column{8} = {c},
		column{9} = {c},
		column{10} = {c},
		cell{1}{5} = {c=2}{},
		cell{1}{7} = {c=2}{},
		cell{1}{9} = {c=2}{},
		cell{2}{2} = {c},
		cell{2}{3} = {c},
		cell{2}{4} = {c},
		cell{2}{5} = {c},
		cell{3}{2} = {c},
		cell{3}{3} = {c},
		cell{3}{4} = {c},
		cell{3}{5} = {c},
		cell{4}{2} = {c},
		cell{4}{3} = {c},
		cell{4}{4} = {c},
		cell{4}{5} = {c},
		cell{5}{2} = {c},
		cell{5}{3} = {c},
		cell{5}{4} = {c},
		cell{5}{5} = {c},
		cell{6}{2} = {c},
		cell{6}{3} = {c},
		cell{6}{4} = {c},
		cell{6}{5} = {c},
		cell{7}{2} = {c},
		cell{7}{3} = {c},
		cell{7}{4} = {c},
		cell{7}{5} = {c},
		vline{7,9,11} = {1}{},
		vline{5-6,8,10} = {1}{},
				vline{2-11} = {2}{},
				vline{-} = {3-7}{},
				hline{1} = {5-10}{},
				hline{2} = {2-10}{},
				hline{3-8} = {-}{},
			}
		           &            &    &   & $\rho_F(x) = \|x\|_2$ &      & $\rho_F(x) = \|x\|_1$ &      & $\rho_F(x) = \|x\|_\infty$ &      \\
		           & m          & n  & k & {Iteration                                                                                      \\
		Ratio}     & {Time                                                                                                                 \\
		Ratio}     & {Iteration                                                                                                            \\
		Ratio~}    & {Time                                                                                                                 \\
		Ratio~}    & {Iteration                                                                                                            \\
		Ratio~}    & {Time                                                                                                                 \\
		Ratio~}                                                                                                                            \\
		Wine       & 178        & 13 & 3 & 1.36                  & 1.27 & 1.51                  & 1.67 & 1.41                       & 1.27 \\
		Iris       & 150        & 4  & 3 & 1.75                  & 1.43 & 1.56                  & 1.20 & 1.31                       & 1.19 \\
		PIMA       & 768        & 8  & 2 & 2.02                  & 1.41 & 1.35                  & 1.13 & 2.09                       & 1.67 \\
		Ionosphere & 351        & 34 & 2 & 1.44                  & 1.32 & 1.09                  & 1.02 & 1.43                       & 1.33 \\
		USCity     & 1217       & 2  & 3 & 4.13                  & 3.71 & 1.77                  & 1.65 & 2.06                       & 1.91
	\end{tblr} }
\caption{DCA/aBDCA ratios for time and iterations in \Cref{comparison tests} using \cref{BDCA unconstrained - 2}}
	\label{table: 6 real datasets with skipping}
\end{table}
\end{example}

\begin{example}\label{4centers}
	We consider an artificial example similar to Example 7.3 from \cite{Nam2018}, involving 1000 points within 4 circles, where 4 centers are found constrained to a single center circle (see in \Cref{fig:1000pts}). The parameters are $\sigma=10,\delta=0.25,\lambda_{\text{start}}=0.1$, \(\lambda_{f}=10^{-3}\), \(\lambda_{\text{skip}}=30\). Figures \ref{4centers_compare_withskip} and  \ref{4centers_compare_w/oskip} compare adaptive BDCA and DCA using \Cref{BDCA constrained,BDCA constrained - 2} over 100 random initial values. The need for skipping becomes clear, as both show similar iteration improvements. However, performing \cref{BDCA constrained} makes it 2/3 slower than DCA, while  \Cref{BDCA constrained - 2} offers a run-time advantage. This can be explained by examining the line search step lengths in both BDCA versions.
 
 \begin{figure}[htb!]
	\centering
	\includegraphics[width=0.6\textwidth]{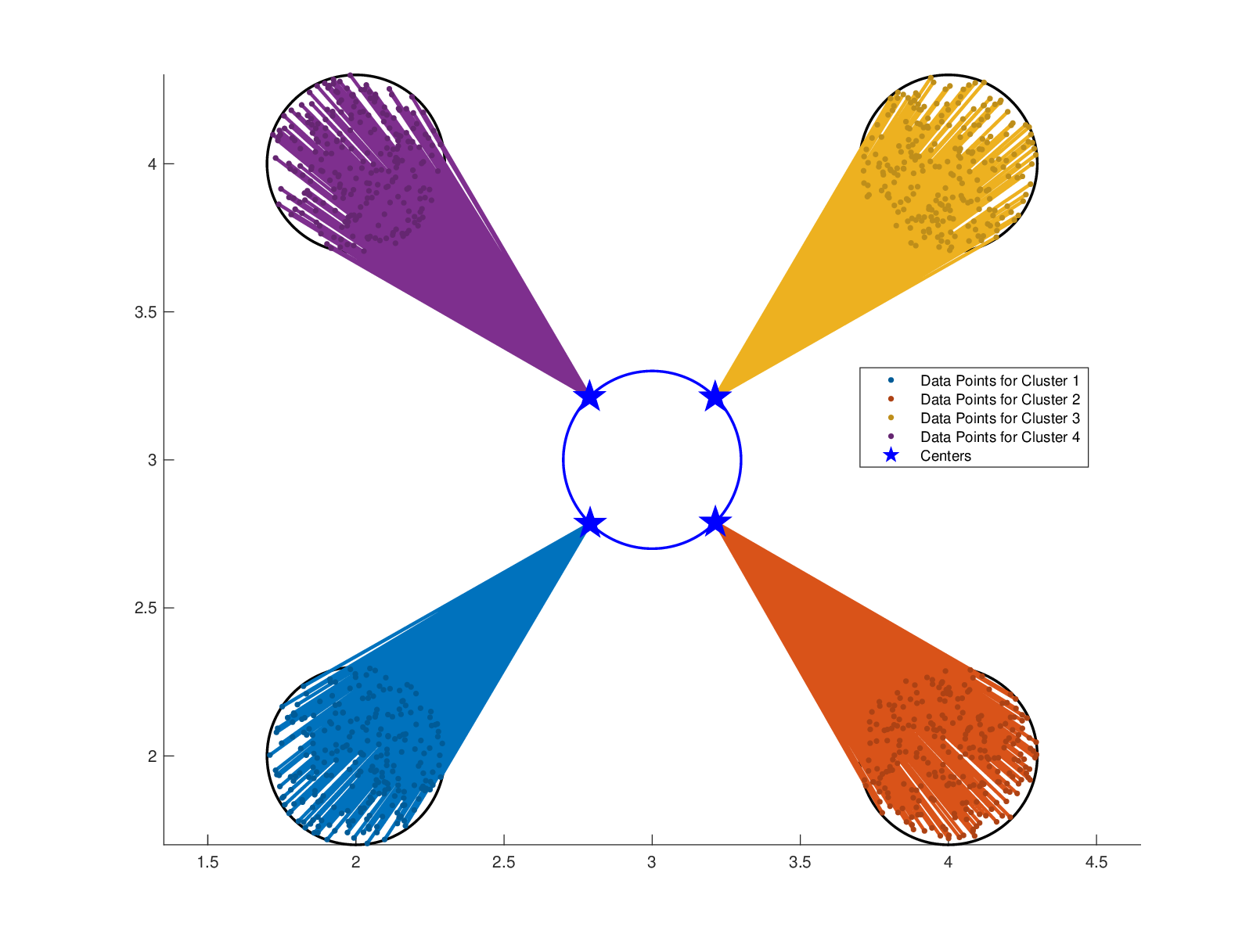}
	\caption{Visualization of the global solution to \Cref{4centers}.}
	\label{fig:1000pts}
\end{figure}

\begin{figure}[htb!]
	\centering
	\begin{subfigure}[b]{0.42\textwidth}
		\centering
		\includegraphics[width=\textwidth]{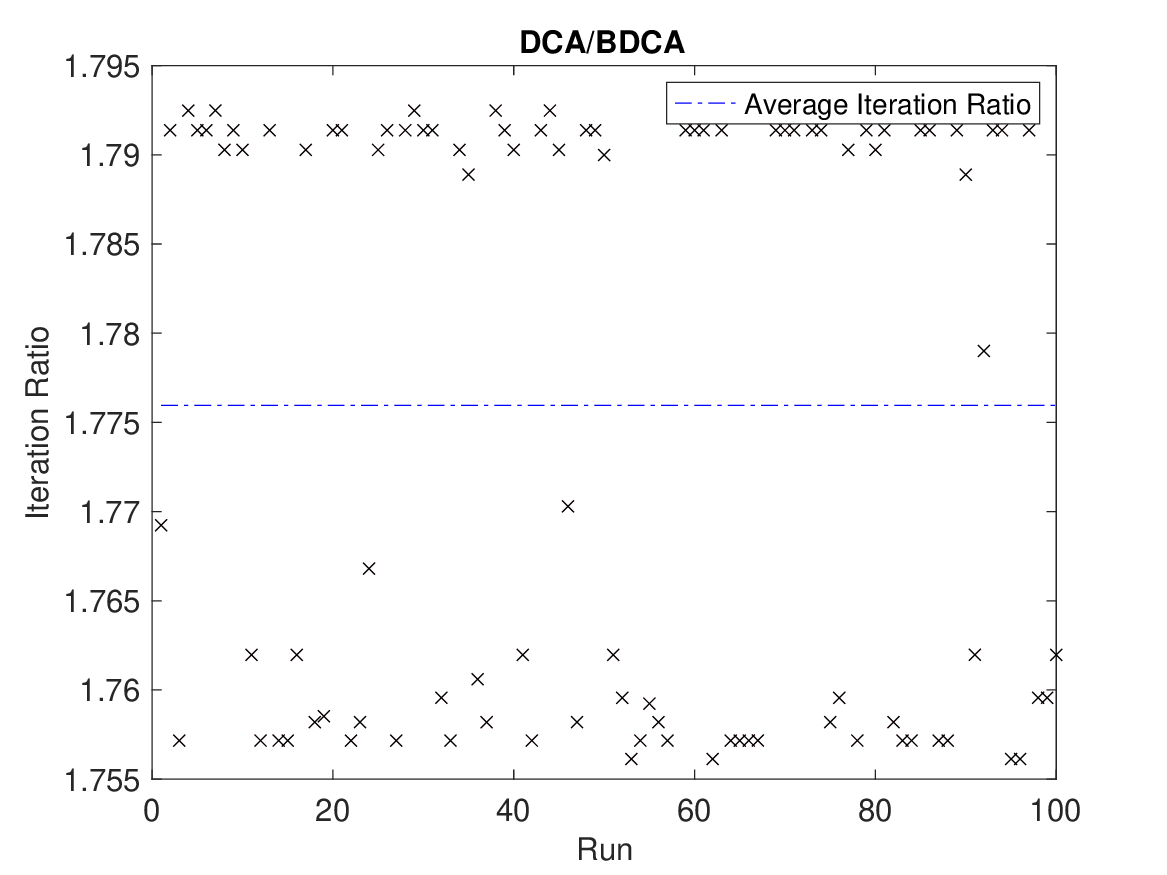}
	\end{subfigure}
	\begin{subfigure}[b]{0.42\textwidth}
		\centering
		\includegraphics[width=\textwidth]{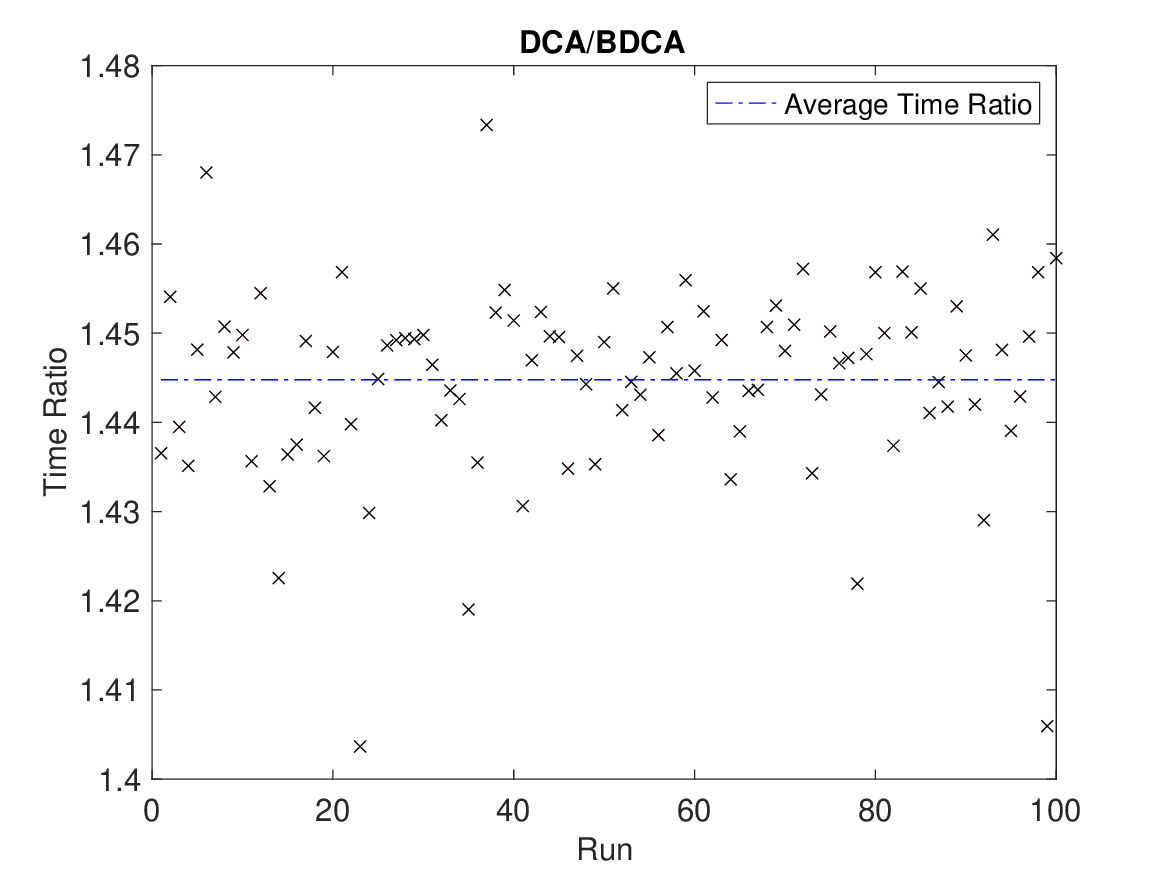}
	\end{subfigure}
	\caption{Iteration and time ratio comparison for \Cref{4centers} with \cref{BDCA constrained - 2}.}
	\label{4centers_compare_withskip}
	\vskip\baselineskip
	\begin{subfigure}[b]{0.42\textwidth}
		\centering
		\includegraphics[width=\textwidth]{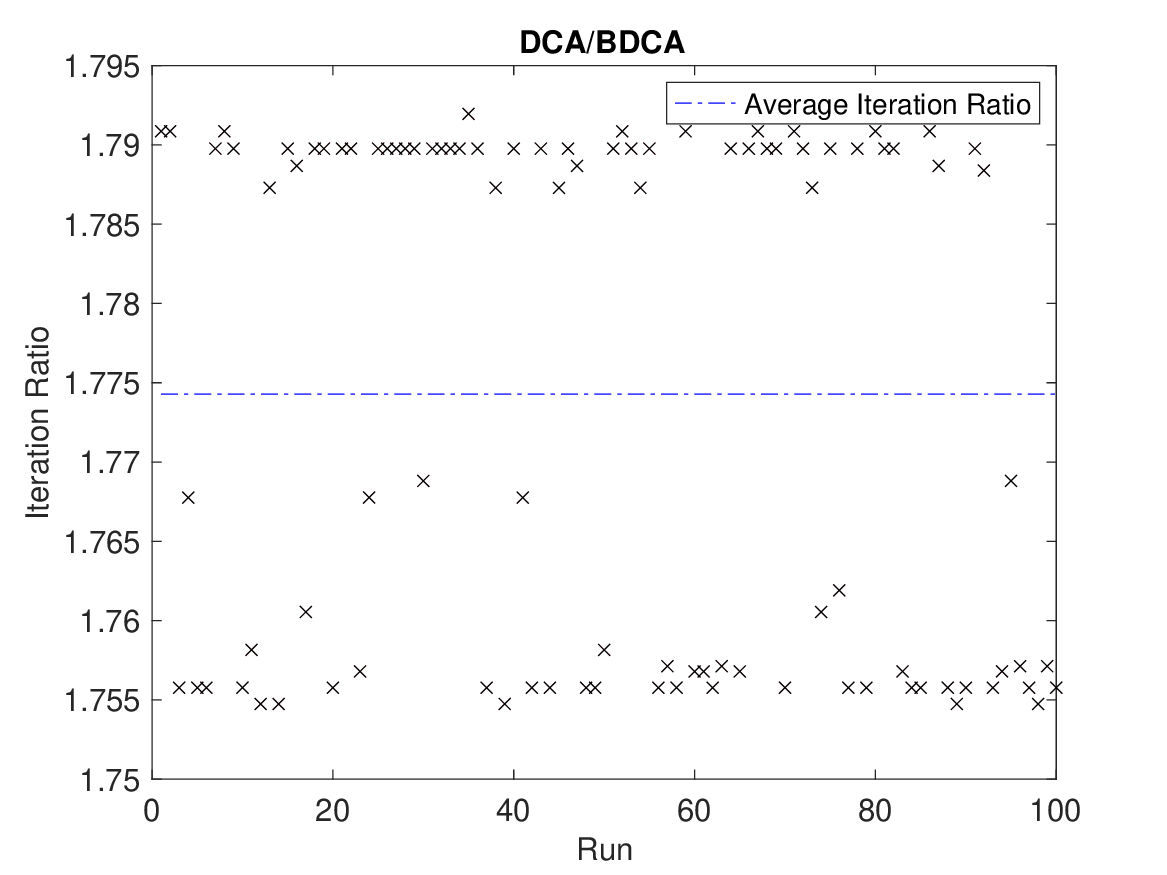}
	\end{subfigure}
	\begin{subfigure}[b]{0.42\textwidth}
		\centering
		\includegraphics[width=\textwidth]{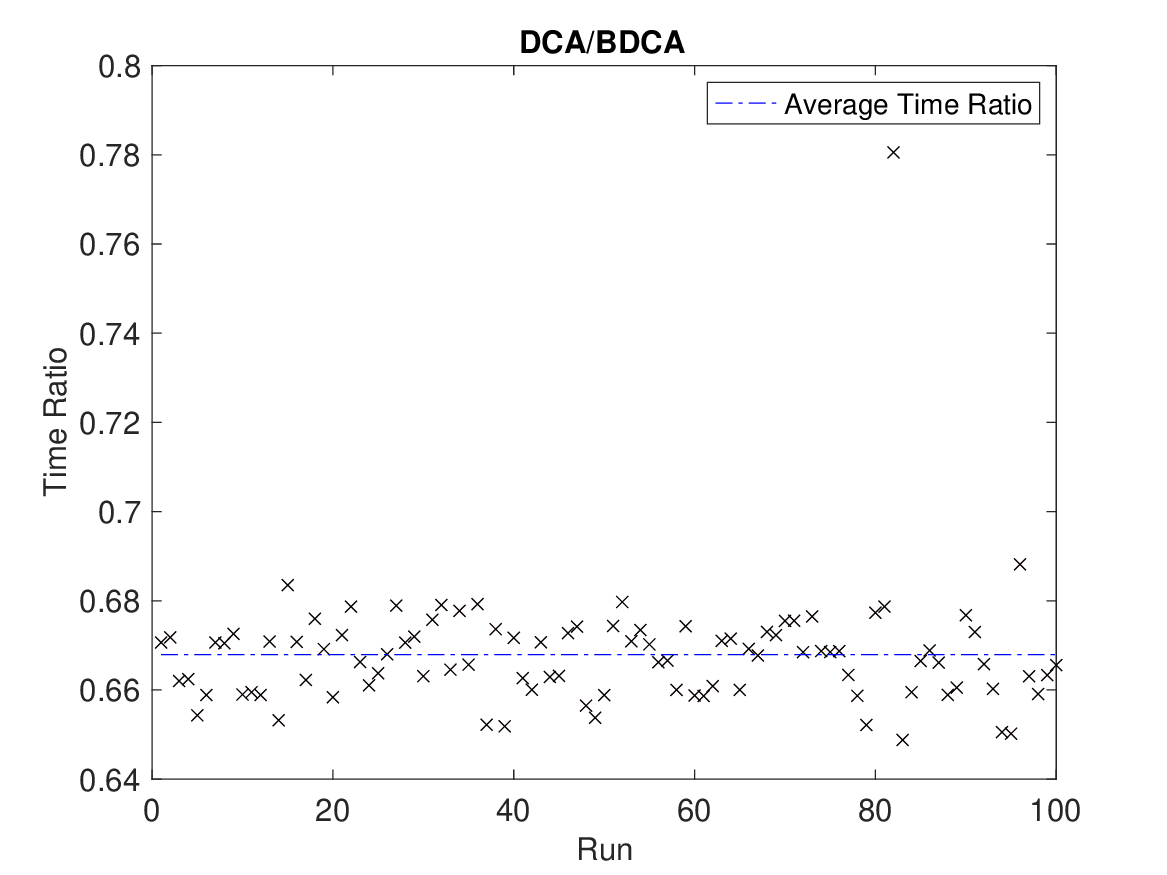}
	\end{subfigure}
	\caption{Iteration and time ratio comparison for \Cref{4centers} with \cref{BDCA constrained}}
	\label{4centers_compare_w/oskip}
\end{figure}

\begin{figure}[htb!]
	\centering
	\begin{subfigure}[b]{0.42\textwidth}
		\centering
		\includegraphics[width=\textwidth]{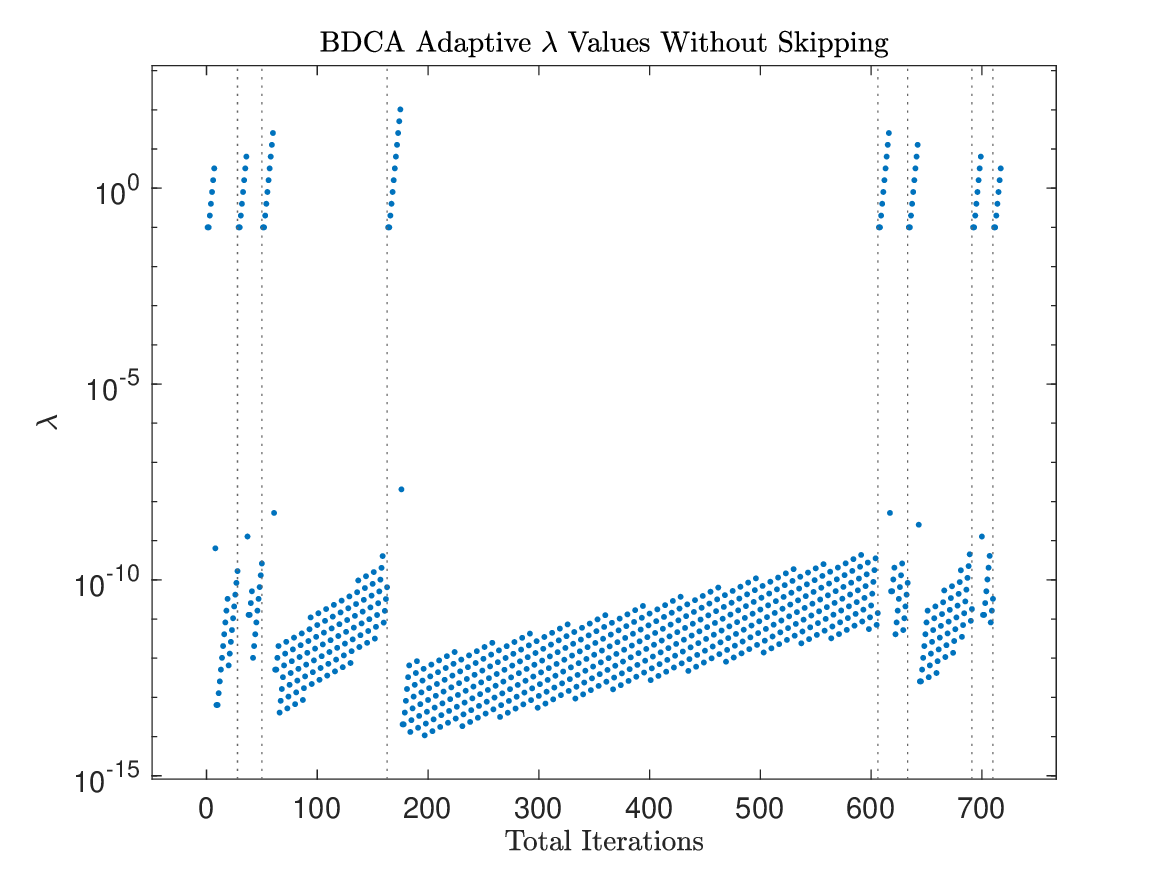}
	\end{subfigure}
	\begin{subfigure}[b]{0.42\textwidth}
		\centering
		\includegraphics[width=\textwidth]{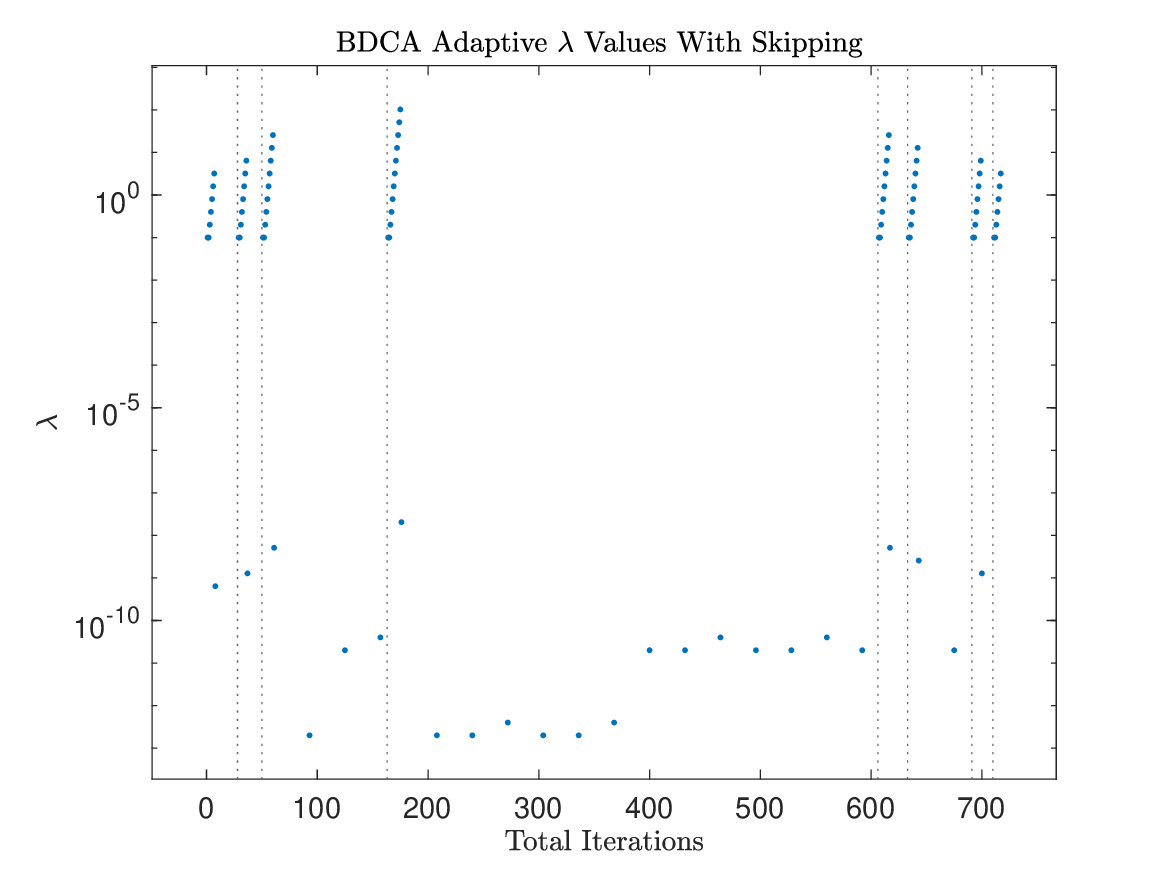}
	\end{subfigure}
	\caption{Line-search step lengths \(\lambda\) for a single run of \Cref{4centers} using \cref{BDCA constrained} (left), and \cref{BDCA constrained - 2} (right). Dashed vertical lines indicate outer loop iteration, adjusting \(\tau\) and \(\mu\).}
	\label{fig: 7_3 lambda skip plot}
\end{figure}

\begin{figure}[htb!]
	\centering
	\begin{subfigure}[b]{0.42\textwidth}
		\centering
		\includegraphics[width=\textwidth]{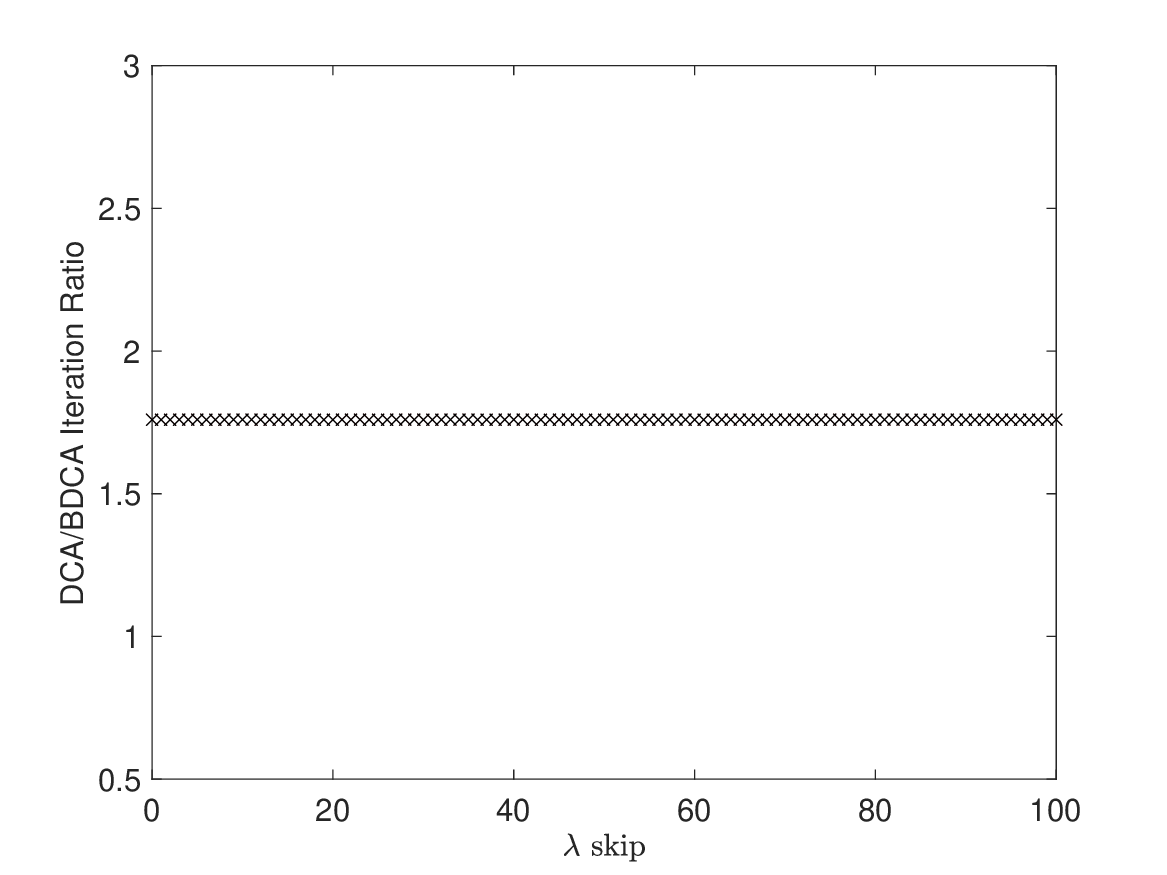}
	\end{subfigure}
	\begin{subfigure}[b]{0.42\textwidth}
		\centering
		\includegraphics[width=\textwidth]{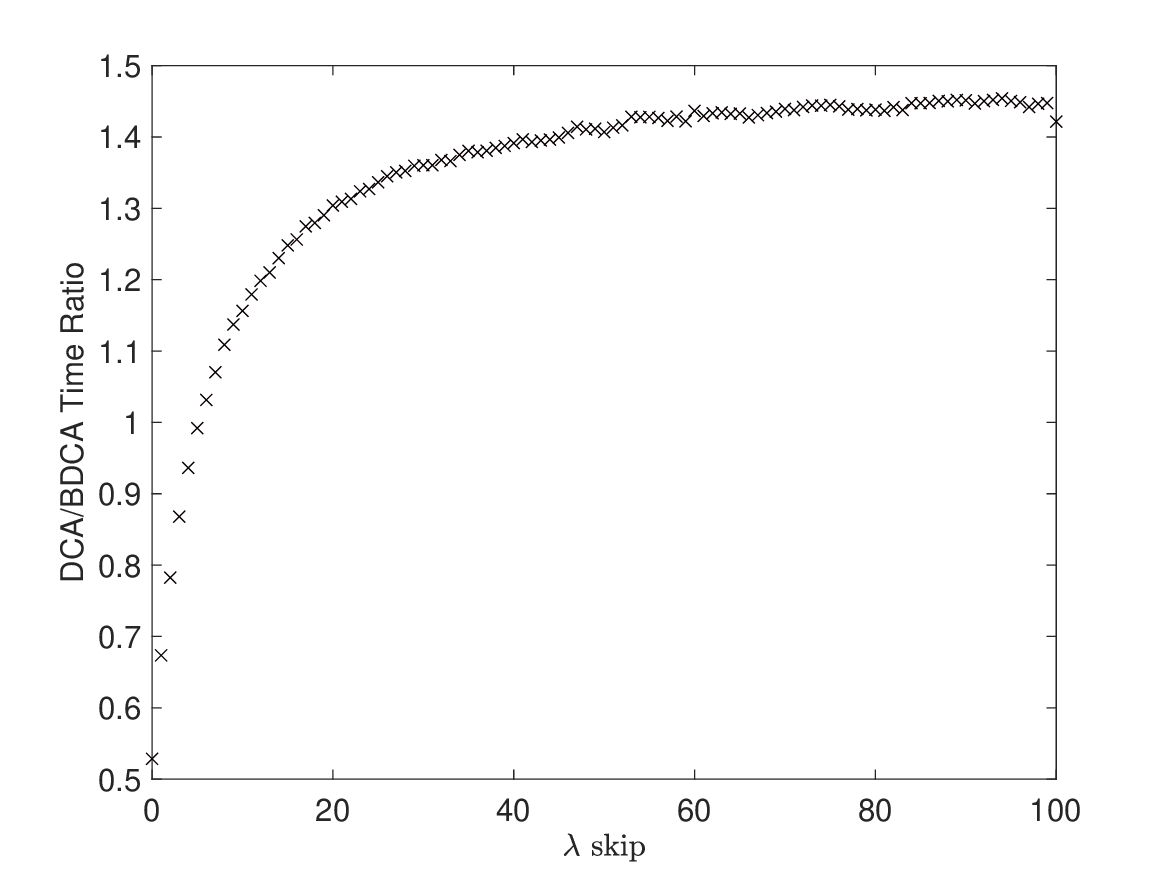}
	\end{subfigure}
	\caption{Average DCA/ aBDCA iteration and time ratio for varying values of $\lambda_{\text{skip}}$ in \cref{BDCA constrained - 2} for \Cref{4centers}.}
	\label{fig: 7_3 skipping plot}
\end{figure}

As shown in \Cref{fig: 7_3 lambda skip plot}, the step length \(\lambda\) is only significant right after adjusting $\tau$ and $\mu$ and becomes negligible after a few iterations, making adaptive BDCA behave like basic DCA. By skipping the line search every 30 iterations when $\lambda$ falls below \(10^{-3}\), we retain the benefits of line search without excessive cost. This pattern of narrow regions with non-trivial line search was consistent across all tested examples, proving the effectiveness of \Cref{BDCA constrained - 2} for MWP. However, such regions don’t always appear immediately after adjusting \(\tau \text{ and } \mu\), as shown in \Cref{example: US} and \Cref{fig: 7_4 lambda skip plot}. This is why we need a skipping algorithm rather than switching to basic DCA when $\lambda$ falls below $\lambda_f$. Finally, \Cref{fig: 7_3 skipping plot} shows that large values of \(\lambda_{\text{skip}}\) in \Cref{BDCA constrained - 2} have no negative impact, and skipping leads to significant performance gains over DCA in this case.
\end{example}

\begin{example}\label{example: US}
	We revisit Example 7.4 from \cite{Nam2018}, which uses real-world data consisting of the latitudes and longitudes of the 988 most-populated cities in the contiguous United States \cite{usmaps}. A visualization of the problem and its solution is provided in \cite{Nam2018}. The parameters are \(\sigma = 100, \; \delta =0.25, \; \lambda_{\text{start}}=1, \; \lambda_{f} = 10^{-3}, \; \lambda_{\text{skip}}=100\). \Cref{fig: US compare with skip,fig: US compare without skip} compare the performance of \cref{BDCA constrained,BDCA constrained - 2}, using 100 random initial values. Skipping is again crucial: while both algorithms reduce the iteration count by approximately 1.7 times compared to DCA, \cref{BDCA constrained} is 0.8 times slower due to the line search cost. With \cref{BDCA constrained - 2}, however, we achieve a 1.6 run-time improvement over DCA. \Cref{fig: 7_4 lambda skip plot} illustrates the line search step lengths for both versions. Again, distinct narrow regions emerge where line search improves adaptive BDCA, while for most iterations, adaptive BDCA behaves like DCA. Unlike previous examples, these regions occur throughout an inner loop with fixed $\tau$ and $\mu$ values. The skipping version effectively detects and activates the line search when necessary, maintaining similar iteration improvements to \cref{BDCA constrained}. Figure \ref{fig: 7_4 skipping plot} shows the impact of varying $\lambda_{\text{skip}}$. The more complex nature of this problem is reflected in the noticeable sensitivity to $\lambda_{\text{skip}}$, but across a wide range of values, most yield a run-time improvement of around 1.6, as long as $\lambda_{\text{skip}}$ exceeds 20. This highlights the robustness of the algorithm to the choice of $\lambda_{\text{skip}}$. 
\end{example}

\begin{figure}[htb!]
	\centering
	\begin{subfigure}[b]{0.42\textwidth}
		\centering
		\includegraphics[width=\textwidth]{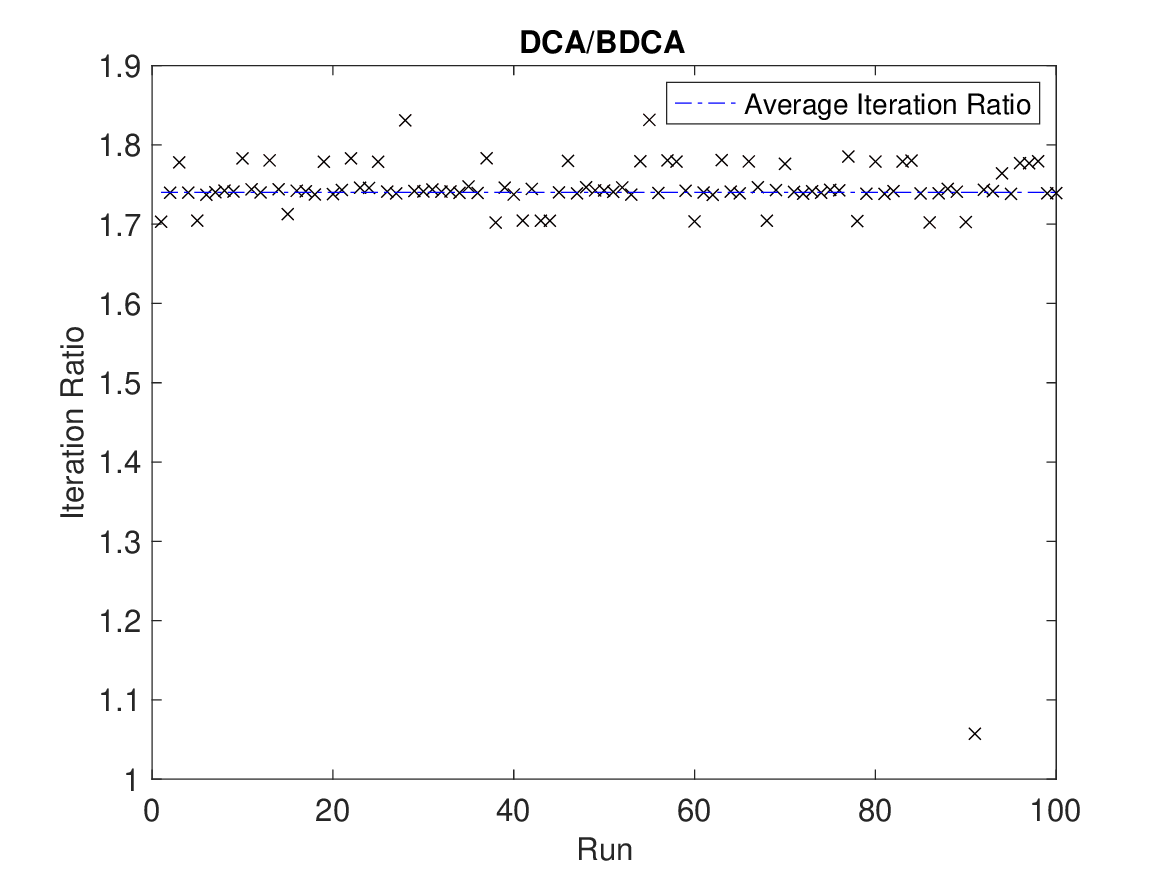}
	\end{subfigure}
	\begin{subfigure}[b]{0.42\textwidth}
		\centering
		\includegraphics[width=\textwidth]{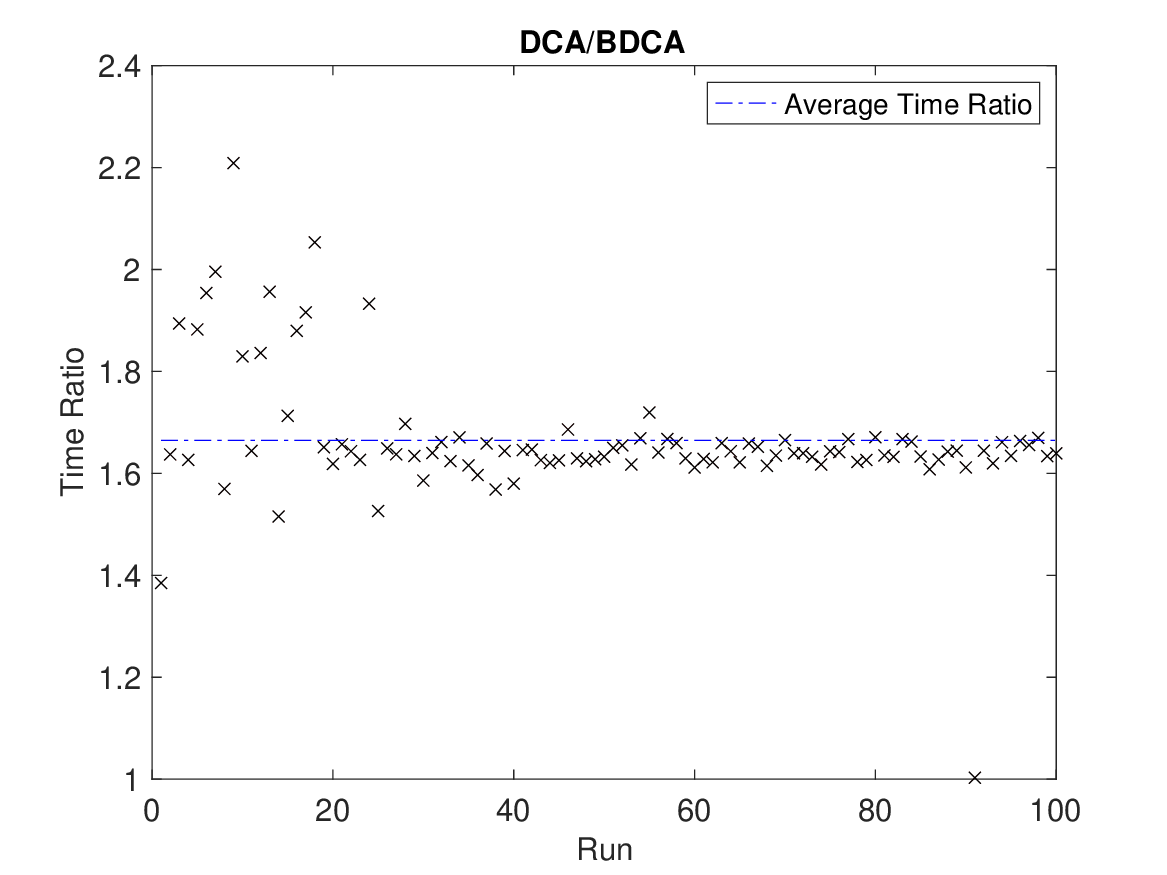}
	\end{subfigure}
	\caption{Iteration and time ratio comparison for \Cref{example: US} using \cref{BDCA constrained - 2}.}
	\label{fig: US compare with skip}
	\vskip\baselineskip
	\begin{subfigure}[b]{0.42\textwidth}
		\centering
		\includegraphics[width=\textwidth]{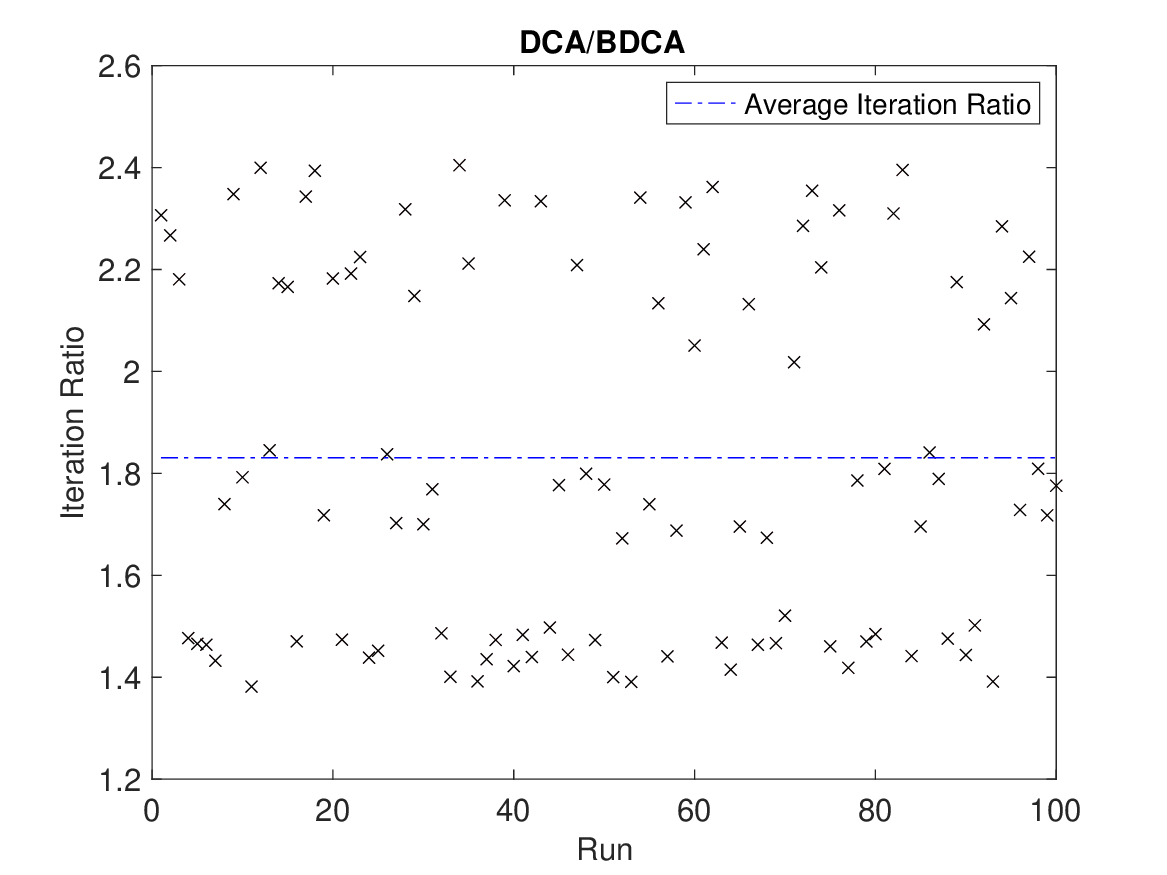}
	\end{subfigure}
	\begin{subfigure}[b]{0.42\textwidth}
		\centering
		\includegraphics[width=\textwidth]{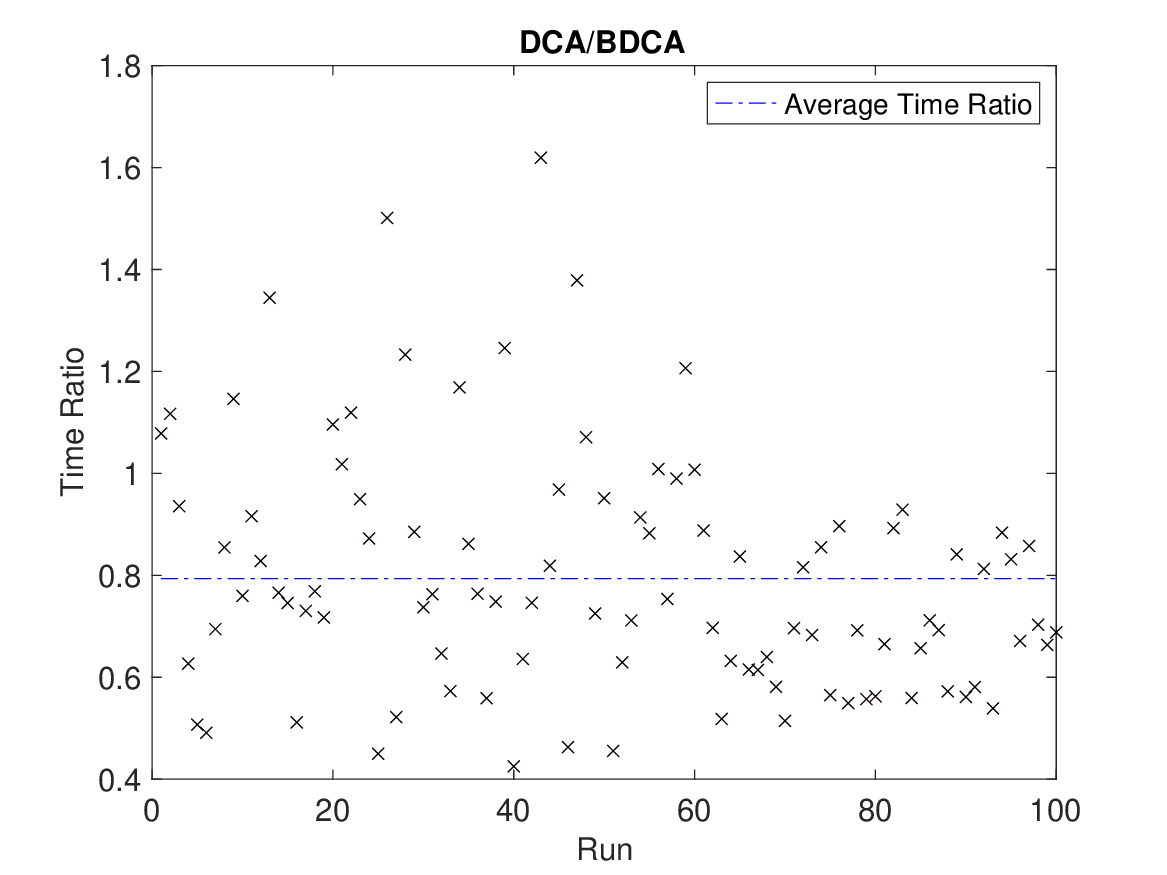}
	\end{subfigure}
	\caption{Iteration and time ratio comparison for \Cref{example: US} using \cref{BDCA constrained}.}
	\label{fig: US compare without skip}
\end{figure}

\begin{figure}[htb!]
	\centering
	\begin{subfigure}[b]{0.42\textwidth}
		\centering
		\includegraphics[width=\textwidth]{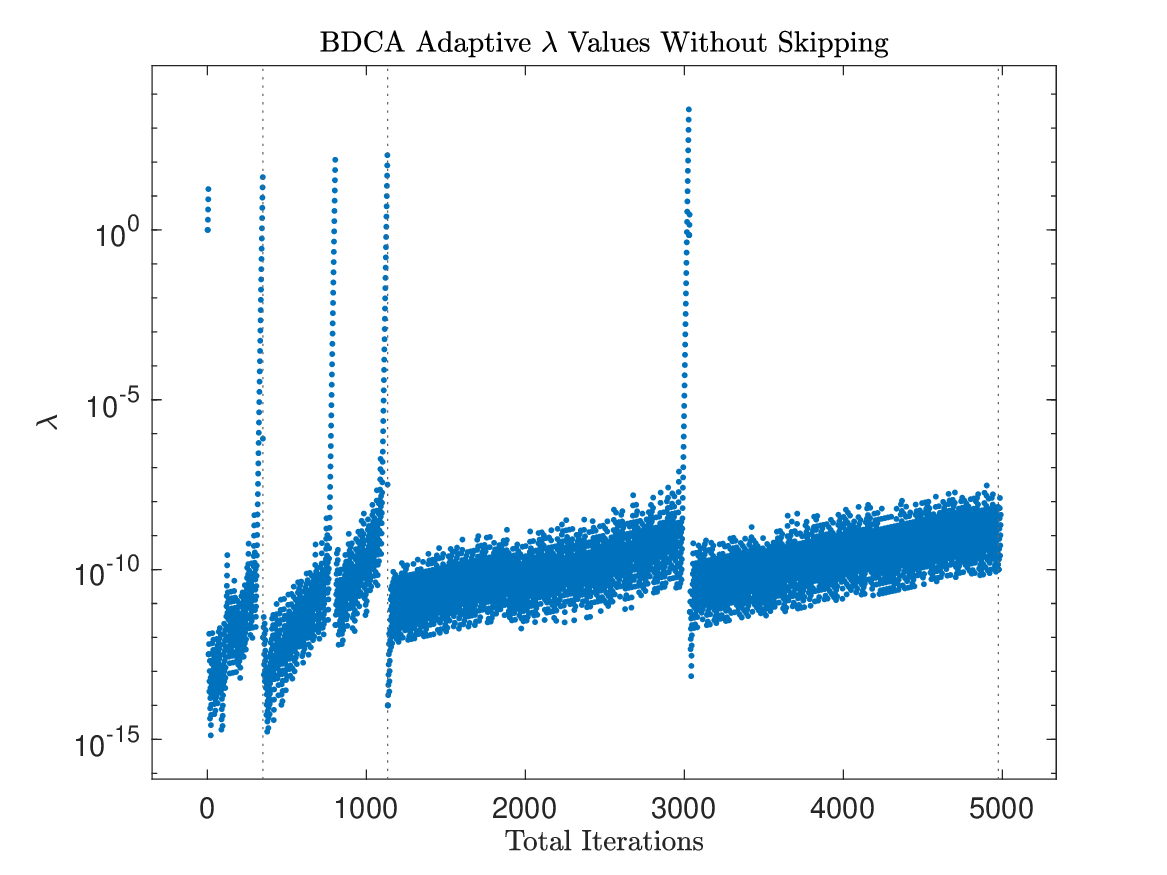}
	\end{subfigure}
	\begin{subfigure}[b]{0.42\textwidth}
		\centering
		\includegraphics[width=\textwidth]{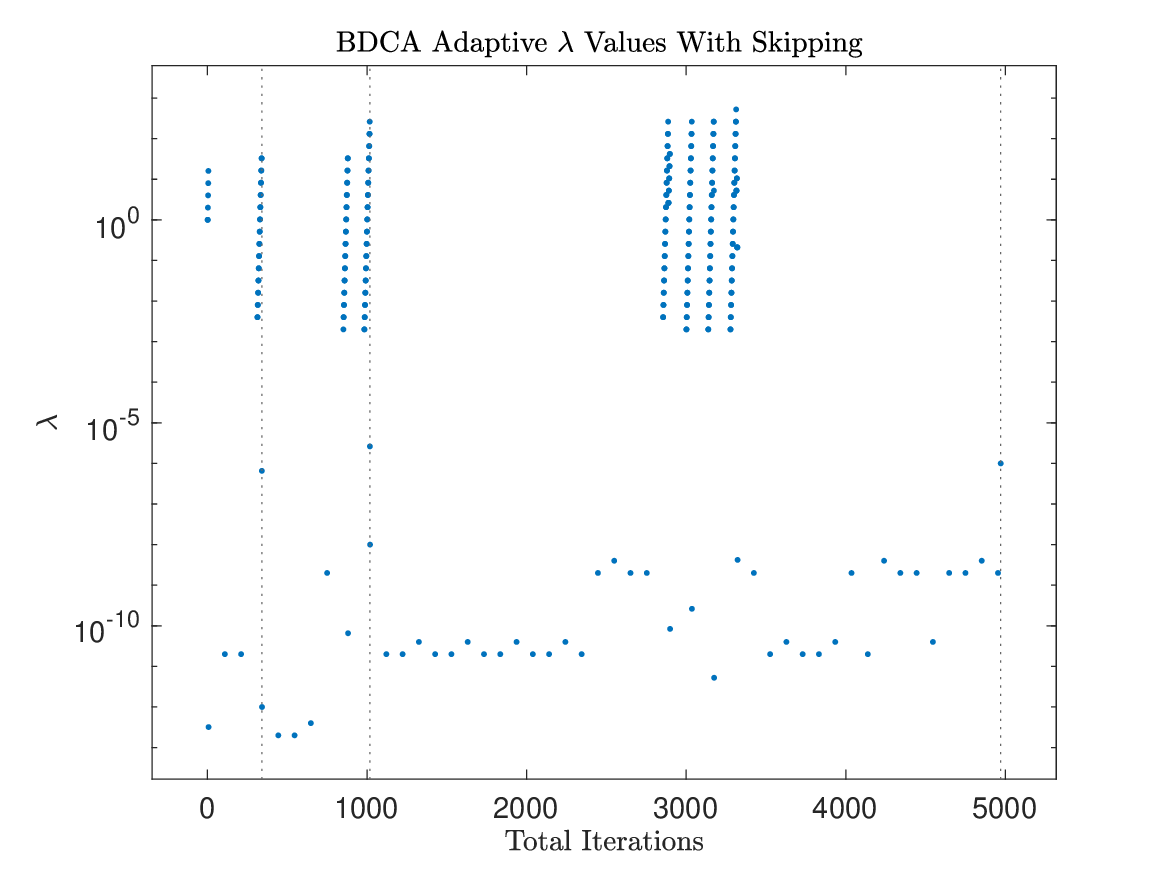}
	\end{subfigure}
	\caption{Line-search step lengths \(\lambda\) for a single run of \Cref{example: US} using \cref{BDCA constrained} (left) and \cref{BDCA constrained - 2} (right). Dashed vertical lines indicate outer loop iteration, adjusting \(\tau\) and \(\mu\).}
	\label{fig: 7_4 lambda skip plot}
\end{figure}

\begin{figure}[htb!]
	\centering
	\begin{subfigure}[b]{0.42\textwidth}
		\centering
		\includegraphics[width=\textwidth]{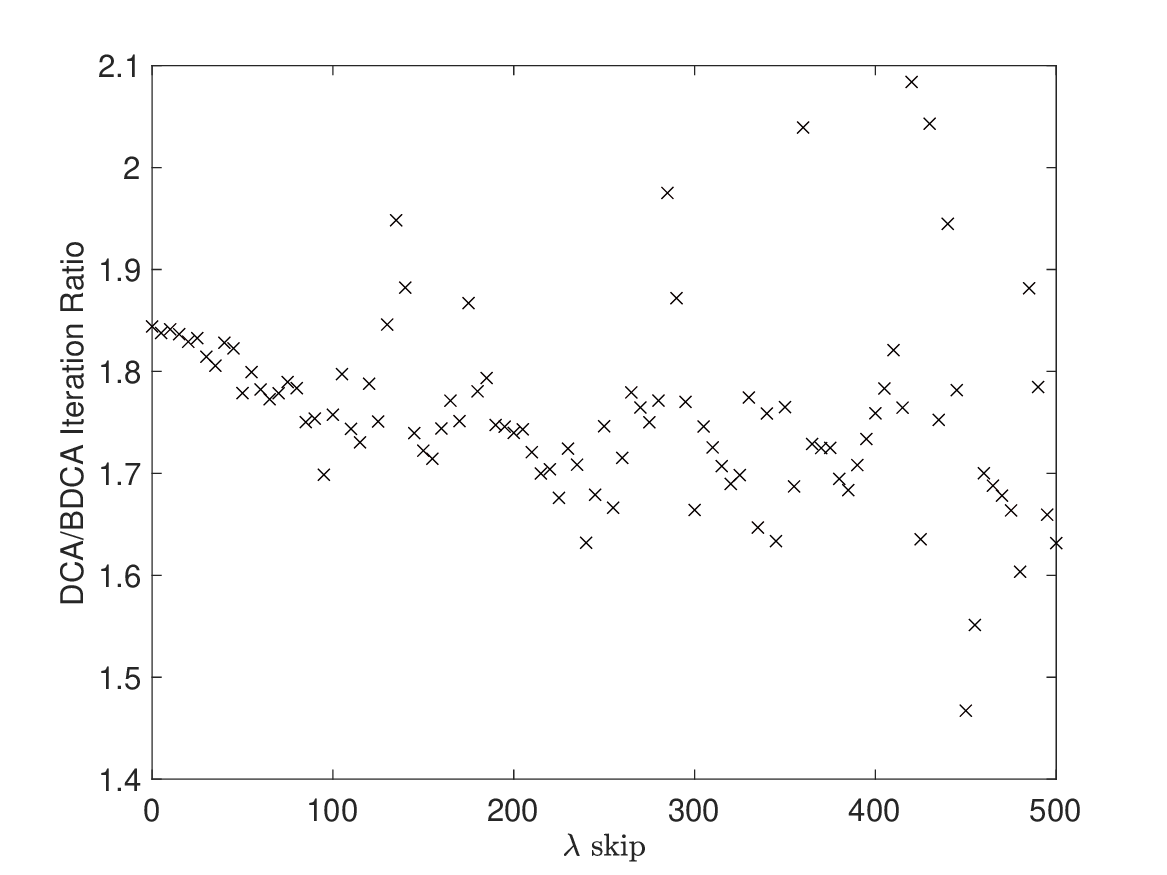}
	\end{subfigure}
	\begin{subfigure}[b]{0.42\textwidth}
		\centering
		\includegraphics[width=\textwidth]{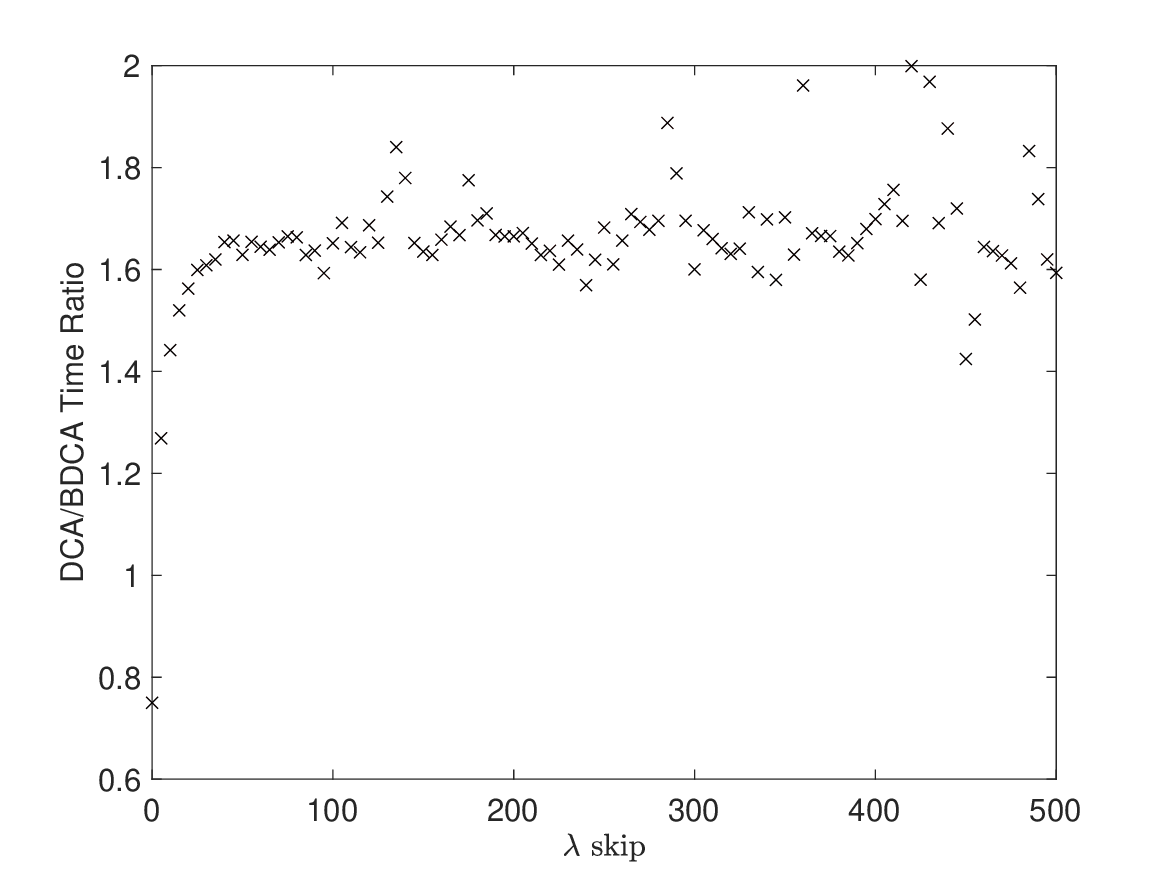}
	\end{subfigure}
	\caption{Average DCA/aBDCA iteration and time ratio for \Cref{example: US} for varying values of $\lambda_{\text{skip}}$ in \cref{BDCA constrained - 2}.}
	\label{fig: 7_4 skipping plot}
\end{figure}

\begin{example}\label{example: traveling salesman}
	We test our algorithm using the EIL76 dataset from the Traveling Salesman Problem Library \cite{Reinelt1991}, seeking 2 centers constrained to lie within:
	\begin{enumerate}
		\item Circles with radii 7 and 4.5, centered at \(\left( 30,40 \right) \text{ and } \left( 32,37\right) \), and the rectangle with vertices \(\{\left( 40,30 \right) ,\left( 40,40 \right), \left( 30,40 \right) ,\left( 30,30 \right) \} \) \\[-0.2in]
		\item Circles with radii 7,7, and 5, centered at \(\left( 35,20 \right) \), \(\left( 45,20 \right) \) and \(\left( 40,15 \right) \).
	\end{enumerate}
	A visualization of the problem and its solution is shown in \Cref{fig: traveling salesman}. The parameters are \(\sigma=100,\; \delta=0.85, \; \lambda_{\text{start}}=1, \; \lambda_{f} = 10^{-3}, \; \lambda_{\text{skip}}=150\). Only the results using \Cref{BDCA constrained - 2} are presented, as \Cref{BDCA constrained}, similar to previous examples, fails to provide a run-time improvement. Figure \ref{fig: traveling salesman compare with skip} shows the relative performance of \Cref{BDCA constrained - 2} compared to DCA, with a moderate run-time improvement of about 1.34 times. In addition, \Cref{fig: 76 cities skipping plot} shows that a wide range of \(\lambda_{\text{skip}}\)  values still result in a speed-up over DCA. 
\end{example}

 \begin{figure}[htb!]
	\centering
	\includegraphics[width=0.65\textwidth]{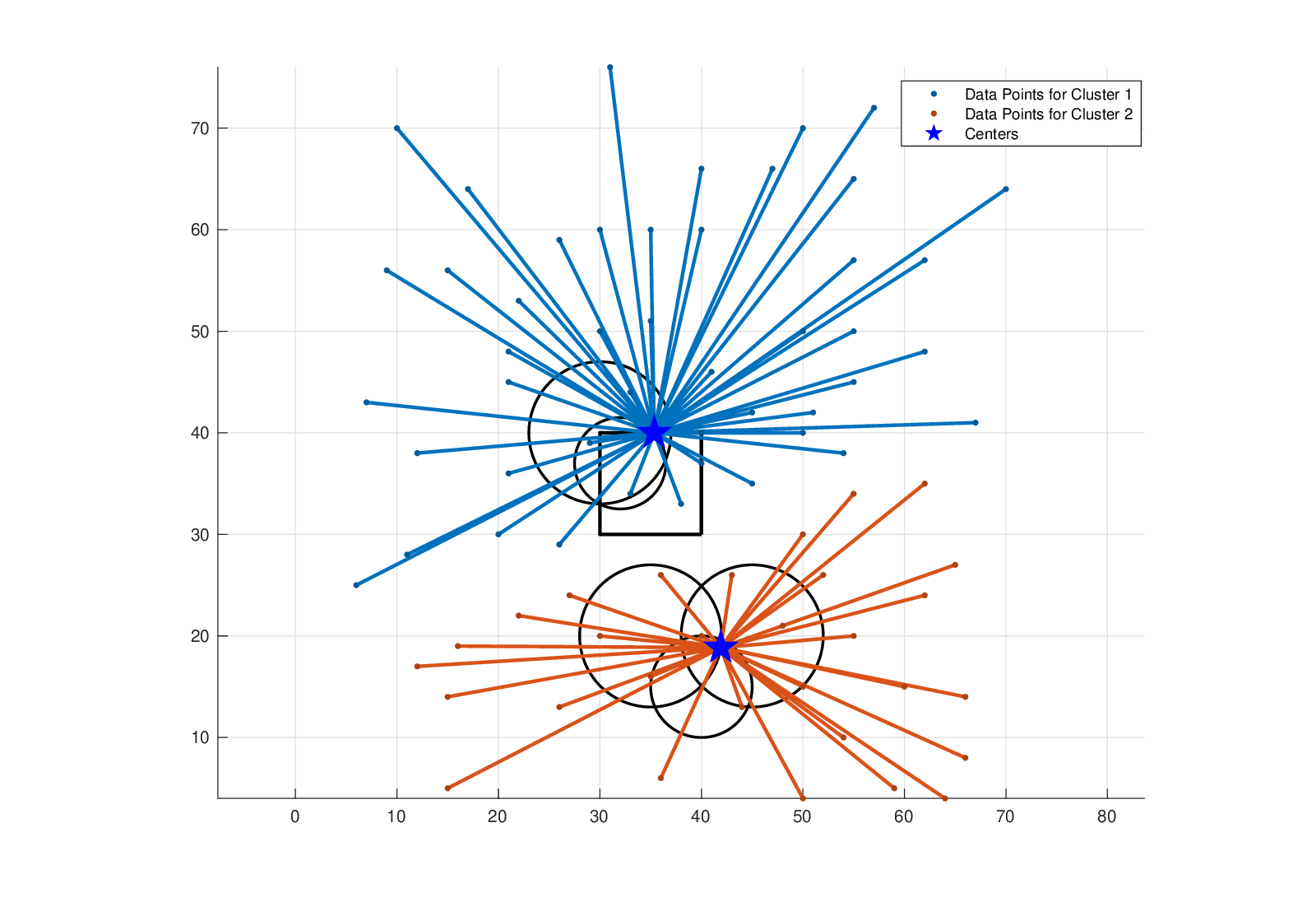}
	\caption{Visualization of the global solution to \Cref{example: traveling salesman}.}
	\label{fig: traveling salesman}
\end{figure}

\begin{figure}[htpb!]
	\centering
	\begin{subfigure}[b]{0.42\textwidth}
		\centering
		\includegraphics[width=\textwidth]{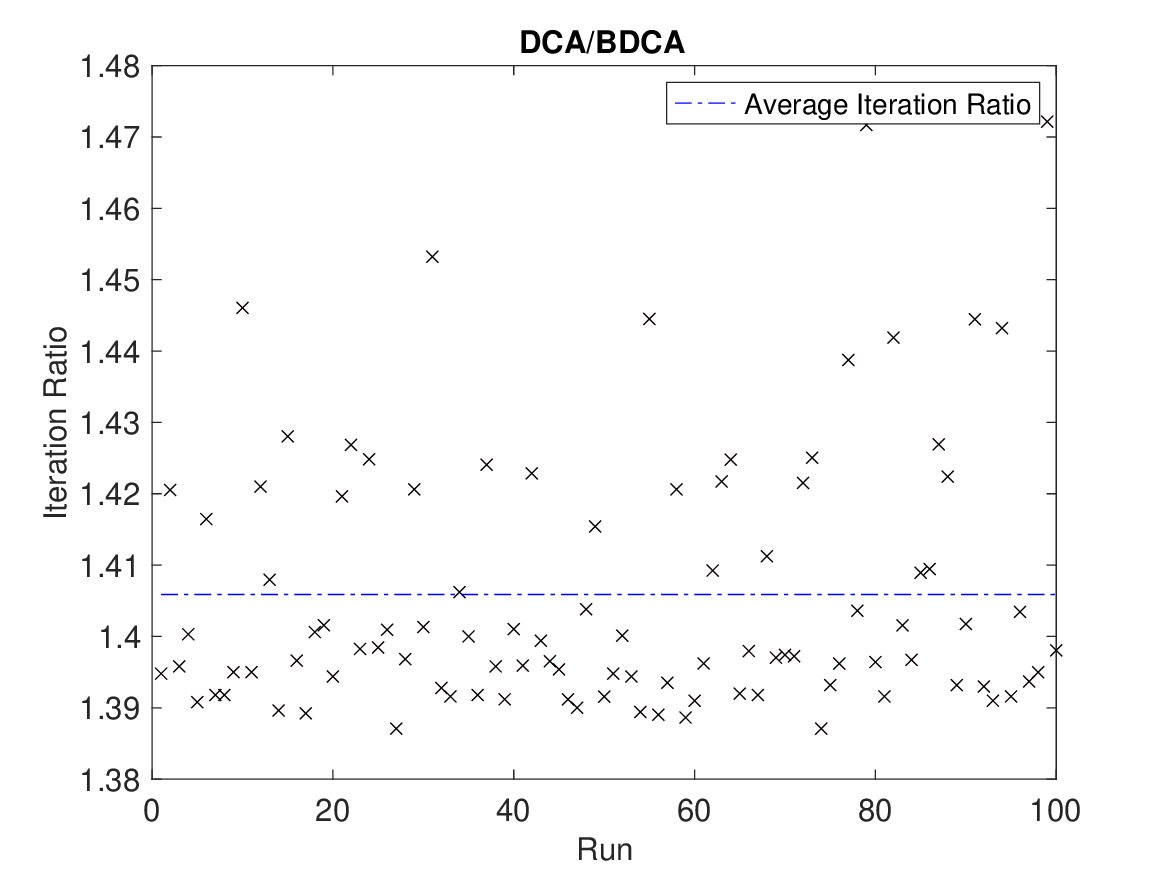}
	\end{subfigure}
	\begin{subfigure}[b]{0.42\textwidth}
		\centering
		\includegraphics[width=\textwidth]{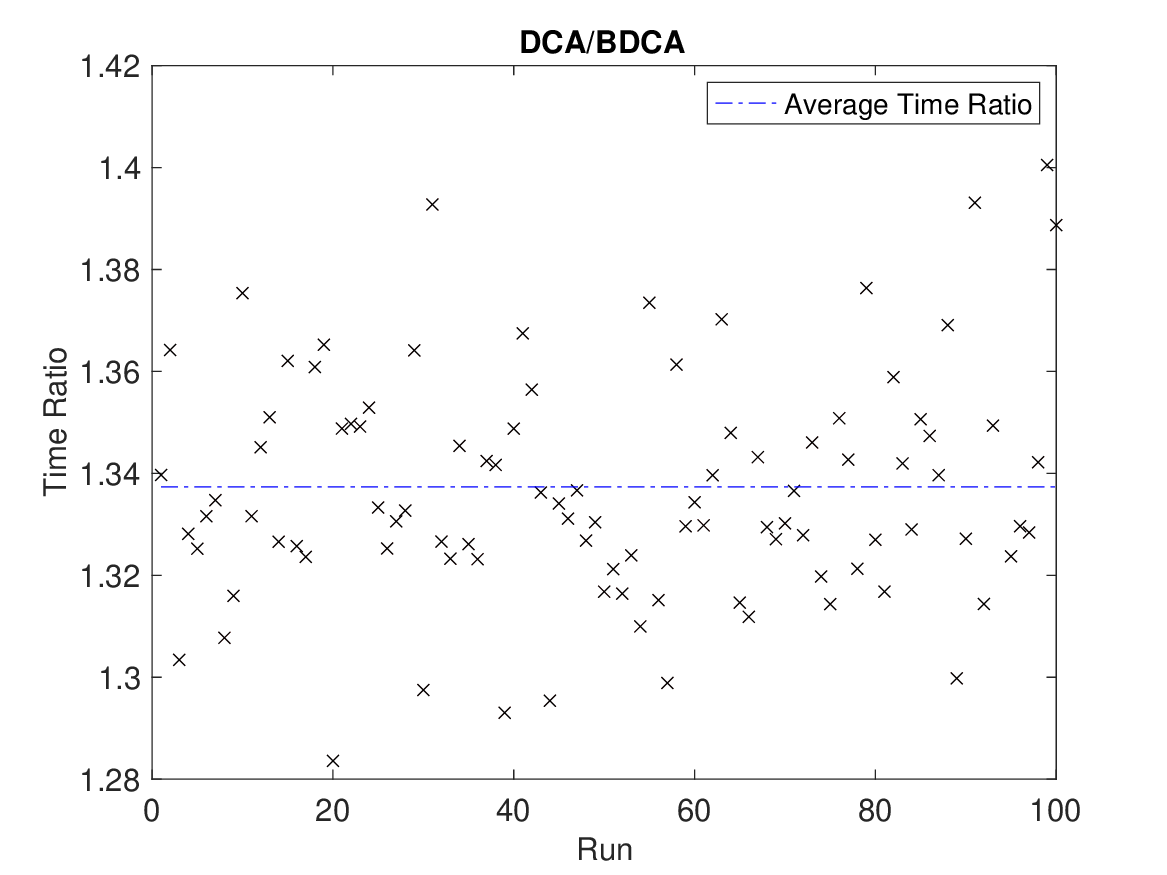}
	\end{subfigure}
	\caption{Iteration and time ratio comparison for \Cref{example: traveling salesman} using \cref{BDCA constrained - 2}.}
	\label{fig: traveling salesman compare with skip}
\end{figure}

\begin{figure}[htpb!]
	\centering
	\begin{subfigure}[b]{0.42\textwidth}
		\centering
		\includegraphics[width=\textwidth]{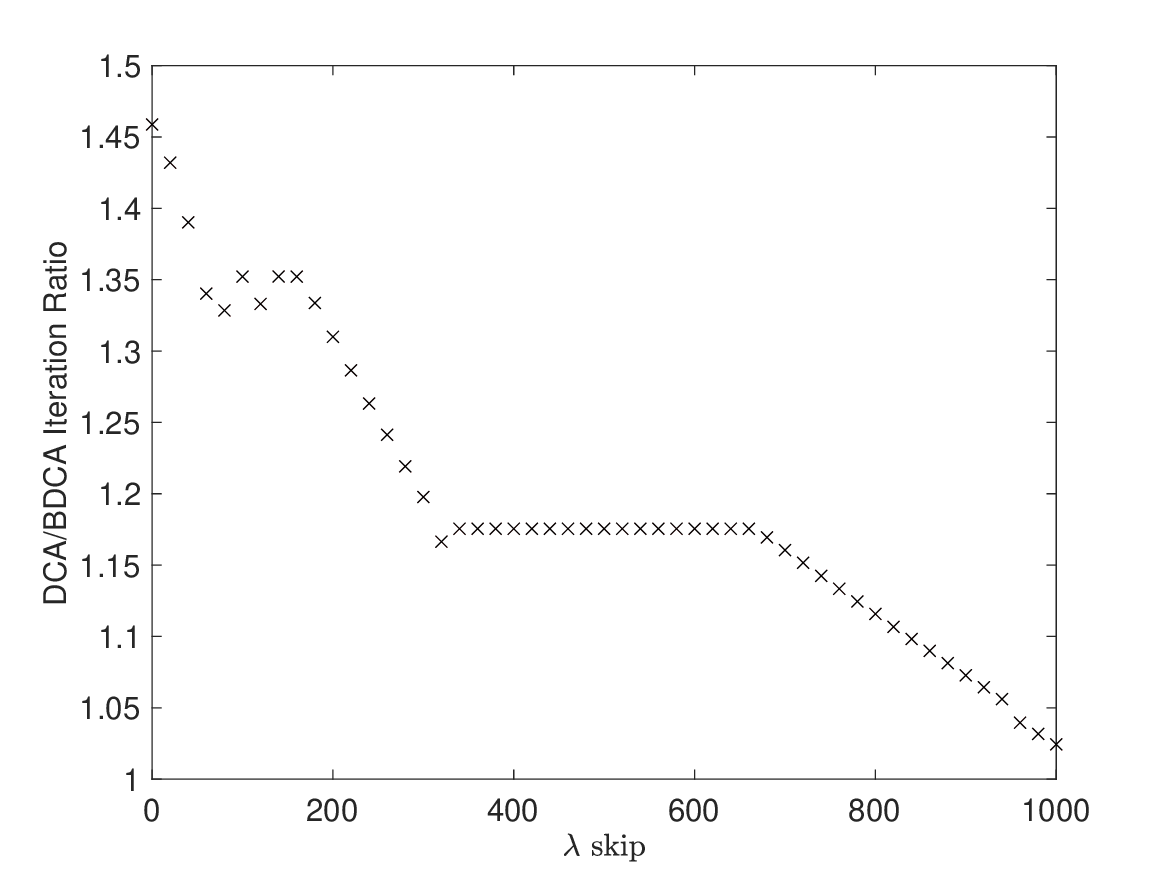}
	\end{subfigure}
	\begin{subfigure}[b]{0.42\textwidth}
		\centering
		\includegraphics[width=\textwidth]{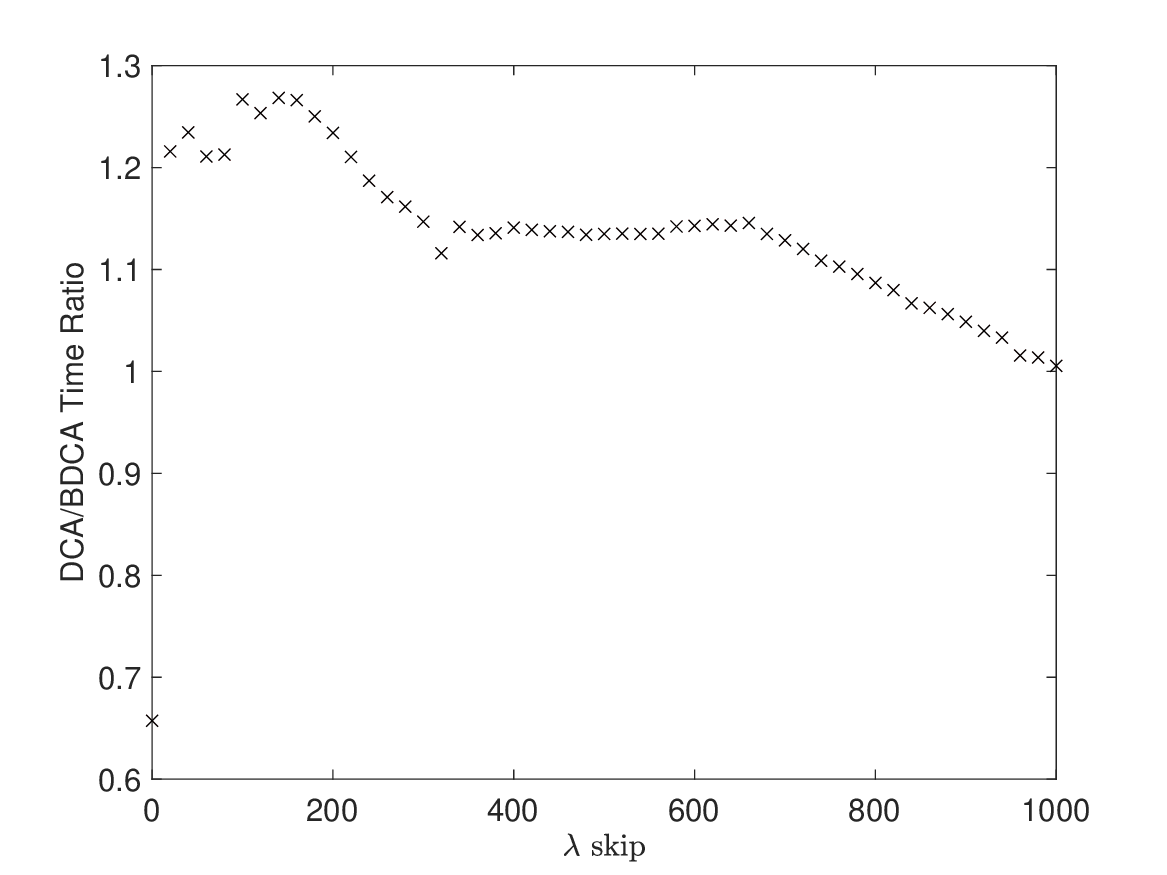}
	\end{subfigure}
	\caption{Average DCA/aBDCA iteration and time ratio for \Cref{example: traveling salesman} using varying values of $\lambda_{\text{skip}}$ in \cref{BDCA constrained - 2}.}
	\label{fig: 76 cities skipping plot}
\end{figure}

\newpage
\section{Conclusion}\label{Con}
In this paper, we provide details exploring the fundamental qualitative properties of the Generalized Multi-source Weber Problem using the Minkowski gauge function instead of the Euclidean norm. We show that the problem always admits a global solution and that the global solution set is compact. In addition, we introduce a concept of a local solution and provide necessary and sufficient conditions for its existence. From a numerical perspective, we propose an adaptive BDCA with skipping technique that improves both iteration counts and run-time. Unlike the results in \cite{Artacho2020}, adaptive BDCA struggles with GMWP. While reducing iterations compared to DCA, it suffers from slower run times due to the line search cost. Our new algorithm incorporates a skipping mechanism to avoid unnecessary line searches, which improves both iteration counts and run-time. The skipping parameter is robust, with a wide range of effective values for each problem. Future work will focus on developing an advanced skipping method for more complex problems and testing the algorithm on large-scale, higher-dimensional problems.

\end{document}